\newtheorem{theorem}{Theorem}[section]
\newtheorem{remark}[theorem]{Remark}
\newtheorem{proposition}[theorem]{Proposition}
\newcommand{\weakly}{\rightharpoonup}\normalfont
\newcommand{\N}{\ensuremath{\mathbb{N}}}
\newcommand{\Z}{\ensuremath{\mathbb{Z}}}
\newcommand{\R}{\ensuremath{\mathbb{R}}}
\newcommand{\X}{\mathbb{X}}
\newcommand{\Y}{\mathbb{Y}}
\newcommand{\supp}{\textnormal{supp}}
\newcommand{\norm}[1]{\left\Vert #1\right\Vert}
\newcommand{\dx}{\,\mathrm{d}}
\newcommand{\tT}{\mathrm{T}}
\newcommand{\KL}{\text{KL}}
\newcommand{\Bnuxi}{B(\nu, r)} 
\newcommand{\Bmuxi}{B(\mu, r)}
\newcommand{\sighalf}{\Sigma^{\scriptscriptstyle -1/2}}
\newcommand{\pieps}{\hat\pi_\varepsilon}
\def\3{\ss}
\newcommand*\pFq[6][8]{
  \begingroup 
  \pFqmuskip=#1mu\relax
  \begingroup\lccode`\~=`\,
  \lowercase{\endgroup\let~}\pFqcomma
  {}_{#2}F_{#3}{\left(\genfrac..{0pt}{}{#4}{#5};#6\right)}%
  \endgroup
}
\newcommand*\pRegFq[6][8]{
  \begingroup 
  \pFqmuskip=#1mu\relax
  \begingroup\lccode`\~=`\,
  \lowercase{\endgroup\let~}\pFqcomma
  {}_{#2}\tilde{F}_{#3}{\left(\genfrac..{0pt}{}{#4}{#5};#6\right)}%
  \endgroup
}
\newcommand\mystack[2]{\genfrac{}{}{0pt}{}{#1}{#2}}
\newcommand{\pFqcomma}{\mskip\pFqmuskip}
\DeclareMathOperator*{\id}{id}
\DeclareMathOperator*{\argmin}{argmin}
\DeclareMathOperator{\OT}{OT}
\newcommand{\PRd}{\mathcal{P}(\R^d)}
\newcommand\curwidth{0.38\textwidth}
\newcommand\curimgwidth{\textwidth}
\newcommand\curbarwidth{0.4\textwidth}
\newcommand\curboxwidth{0.4\textwidth}
\newcommand\curfolder{balanced_1e-2}
\numberwithin{equation}{section} 
\begin{document}
\title{Transfer Operators from Optimal Transport\\ Plans
for Coherent Set Detection}

\author{%
	{\sc
		P\'eter Koltai
	} \\[2pt]
	{\large Free University Berlin, Arnimallee 6, 14195 Berlin, Germany}\\[6pt]
	{\sc and}\\[6pt]
	{\sc
		Johannes von Lindheim\textsuperscript{*}, 
		Sebastian Neumayer}\\
	{\sc
		and 
		Gabriele Steidl
	}\\[2pt]
	{\large Technical University Berlin, Strasse des 17. Juni 135, 10623 Berlin, Germany}\\[2pt]
	{\large\textsuperscript{*}Corresponding author. Email: vonlindheim@tu-berlin.de}
}
 	\maketitle
	
	\footnotetext[1]{
		TU Berlin,
		Strasse des 17. Juni 135,
		10623 Berlin, Germany,
		\{name\}@math.tu-berlin.de.}
	\footnotetext[2]{
		FU Berlin,
		Arnimallee 6,
		14195 Berlin, Germany,
		peter.koltai@fu-berlin.de}

\begin{abstract}
The topic of this study lies in the intersection of two fields.
One is related with analyzing transport phenomena in complicated flows.
For this purpose, we use  so-called coherent sets: non-dispersing, possibly moving regions in the flow's domain.
The other is concerned with reconstructing a flow field from observations of its action on a measure, which we address by optimal transport.
We show that the framework of optimal transport is well suited for delivering the formal requirements on which a coherent-set analysis can be based on.
The necessary noise-robustness requirement of coherence can be matched by the computationally efficient concept of unbalanced regularized optimal transport.
Moreover, the applied regularization can be interpreted as an optimal way of retrieving the full dynamics given the extremely restricted information of an initial and a final particle distribution moving according to Brownian motion.
\end{abstract}



\textbf{Keywords:} unbalanced optimal transport, regularized optimal transport, Wasserstein distance, coherent sets, segmentation, Frobenius--Perron operators, Schr\"odinger's question.


\section{Introduction} \label{sec:intro}

Over several decades, much research interest in dynamical systems has been devoted to the study of transport phenomena. 
One studied notion is that of \emph{coherence}.
Unfortunately, there is no generally-agreed-upon-definition of \emph{coherent structure}, other than the necessity for a structure to show some persistence over a longer time horizon.
One way to address the question in fluid mechanics is to consider tracer particles in the flow and study local and global geometric deformations of fluid elements attached to their trajectories under advection. This gives rise to a \emph{Lagrangian} analysis of coherent structures, see~\citet{romkedar_etal_1990, wiggins_92, haller1998finite, aref_02, jones2002invariant, wiggins2005dynamical, shadden2005definition, froyland_padberg_09, Thi12, FrPa14, KaKe20, KoRe18, HaKaKo18} for geometric and probabilistic approaches to this. 

For this study, we focus on the notion of (finite-time) \emph{coherent sets} as introduced by \citet{FrSaMo10} and \cite{froyland2013analytic}, which are defined as sets of particles that are minimally dispersive, or hard for the tracer particles to escape from during a fixed finite time interval. 
The methods detecting coherent sets, or even any coherent structures, are relying on dynamical information in the form of trajectories of individual tracer particles or quantities from which these can be obtained, e.g., velocity fields; see~\citet{hadjighasem2017critical} for an overview.

In this work, we consider the case, where the underlying flow map is not accessible and merely densities of a passive (advected) scalar are given at some initial and final time instance.
In a space-discrete version, this amounts to unordered sets of particle positions at the two time instances.
We can think of this situation as forgetting the ``identity'' or ``label'' information of individual particles, i.e., losing almost all information of the full dynamics.
Due to this, anything we can hope to identify are large and robust dynamical structures, like coherent sets. Previous works addressing the task of finding coherent structures from this kind of data are based on optical flow (more precisely, Advection Corrected Correlation Image Velocimetry) and Lagrangian coherent structure analysis~\citep{hadjighasem2016geodesic}, motion segmentation~\citep{almomani2018go}, and ensemble Kalman filter~\citep{santitissadeekorn2019iterative} for image sequences.
Typical applications include cases where the dynamics are only indirectly observed through radar or satellite images of precipitation intensity, see \citet{dwd} for an example, or other quantities.

The fields of computer vision and image processing have long been concerned with the problem of recovering dynamical information (motion) from sequences of images, for an overview see \citet{BPS2014}. 
Optical flow and motion segmentation are just two of them, which can be used to build a coherence analysis upon. Our main goal---and contribution---is to show that the theories (and state of the art computation) of \emph{optimal transport} (OT) and of coherent sets are naturally connected.
Fig.~\ref{fig:double_gyre_clustering} shows a segmentation example based on our findings, see Subsection~\ref{sec:Ex4}.
Throughout this paper, we
choose intentionally a level of detail on which this connection can be emphasized in a largely self-contained manner.
\renewcommand\curfolder{img/double_gyre}
\renewcommand\curwidth{0.35\textwidth}
\begin{figure}[tbp]
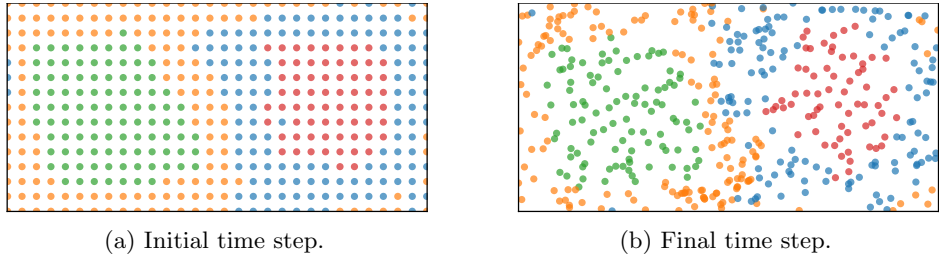

    \centering
    \subfloat[Initial time step.]
    {\includegraphics[width=\curwidth]{\curfolder/kmeans_0.pdf}}
    \hspace*{1cm}
    \subfloat[Final time step.]
    {\includegraphics[width=\curwidth]{\curfolder/kmeans_500.pdf}}
    \caption{Initial and end configuration of a dynamical system consisting of two counter-rotating groups of unlabeled particles.
    The whole movement of the system can be seen in the video in the supplementary material.
    The color scheme encodes a partition into ``coherent'' sets obtained with our method in Subsection~\ref{sec:Ex4}.}
    \label{fig:double_gyre_clustering}
\end{figure}

In a nutshell, we construct Frobenius--Perron operators from transport plans of (unbalanced) regularized optimal transport and use them to find coherent sets in evolving densities or particle ensembles.
Such transport plans can be interpreted as small random perturbations  of deterministic maps, naturally introducing a small amount of ``noise'' that is used for the definition of coherence~\citep{froyland2013analytic}.
Moreover, regularized optimal transport is an optimal choice in the sense that it yields a kind of most likely transport plan, given all that we know of the dynamics is how it maps a single distribution from an initial to a final time and that particles move according to Brownian motion.

The outline of this paper is as follows: 
In Section~\ref{sec:basics}, we recall basic notions from measure theory, and in  Section~\ref{sec:OT}, we introduce properties of OT, regularized OT
and unbalanced regularized OT.
The segmentation model under consideration and the corresponding optimization problem
as well as its relaxation are presented in Section~\ref{sec:segm}. 
Section~\ref{sec:frobenius_perron} deals with Frobenius--Perron transfer operators  and the relation to OT. In particular, we construct appropriate kernels
of transfer operators in two different ways, namely by i) smoothing of OT transport plans, 
and ii) by using regularized OT plans, where the smoothing is already inherent.
Both in i) and ii) the kernels converge to the OT plan if the smoothing parameter 
goes to zero.
As we point out in Section~\ref{sec:schroedinger}, the usage of regularized OT plans can be motivated from a statistical physics perspective.
To this end, we give accessible insights, namely just for two time steps, for the  Schr\"odinger question.
We outline a discrete numerical approach in Section~\ref{sec:discrete}.
Various proof-of-concept examples are presented in Section~\ref{sec:numerics}.
Finally, in Section~\ref{sec:conclusions}, conclusions are drawn.

\section{Preliminaries} \label{sec:basics}
%
Throughout this paper, let $\X,\Y \subset \mathbb R^d$ be compact sets equipped with the Borel $\sigma$-algebras $\mathcal{B}(\X)$, $\mathcal{B}(\Y)$ induced by the subspace topology, respectively.
Assume that the boundaries have Lebesgue measure (denoted by $\lambda$) zero and that the sets fulfill $\min_{x \in \X}\lambda(B_\varepsilon(x)), \min_{y \in \Y}\lambda(B_\varepsilon(y)) > 0$ for every $\varepsilon > 0$ and
\begin{equation}\label{eq:boundary}
\sup_{\varepsilon > 0} \frac{\max_{x \in \X}\lambda(B_\varepsilon(x))}{\min_{x \in \X}\lambda(B_\varepsilon(x))} < C_\X,
\qquad
\sup_{\varepsilon > 0} \frac{\max_{y \in \Y}\lambda(B_\varepsilon(y))}{\min_{y \in \Y}\lambda(B_\varepsilon(y))} < C_\Y,
\end{equation}
for some $C_\X, C_\Y >0$, 
with balls $B_\varepsilon(x) \coloneqq \{x' \in \X: \|x'-x\|_2 < \varepsilon \}$ in the appropriate spaces.
This condition holds for domains satisfying the uniform cone condition, see \citet{AdamsFournier03}, which is fulfilled if $\X$ and $\Y$ have Lipschitz boundaries.
By $\mathcal M(\X)$ we denote the linear space of all finite signed Borel measures on $\X$, 
by $\mathcal M^+(\X)$ the subset of non-negative measures, 
and by $\mathcal{P}(\X)$ the set of Borel probability measures on $\X$.
The closed set $\supp(\mu) \coloneqq \{ x \in \X: B \subset \X \text{ open, }x \in B  \implies \mu(B) >0\}$ is called the \emph{support} of a measure $\mu$.
Further, the \emph{total variation} measure of $\mu \in \mathcal M(\X)$ is defined by
\[
|\mu|(B) \coloneqq \sup \Bigl\{ \sum_{k=1}^\infty |\mu(B_k)|:
\bigcup\limits_{k=1}^\infty B_k = B, \, B_k \; \mbox{pairwise disjoint}\Bigr\}.
\]
Equipped with the norm $\| \mu\|_{\mathcal M} = |\mu|(\X)$ the space $\mathcal M(\X)$ becomes a Banach space.
By $C(\X)$ we denote the Banach space of continuous, real-valued functions on 
$\X$ with norm $\| \varphi\|_{C(\X)} \coloneqq \max_{x \in \X} |\varphi(x)|$.
The space $\mathcal M(\X)$ can be identified via Riesz' representation theorem with the dual space of $C(\X)$
and
the weak-$\ast$ topology on $\mathcal M(\X)$ gives rise to the \emph{weak convergence of measures}, i.e., a sequence $\{\mu_k\}_{k \in \N} \subset \mathcal M(\X)$ converges \emph{weakly} to $\mu$ and we write $\mu_k \weakly \mu$, if
\begin{equation}
\lim_{k \to \infty} \int_{\X} f \dx \mu_k = \int_{\X} f \dx \mu \qquad \text{for all } f \in C(\X).
\end{equation}
Note that the set $\mathcal{P}(\X)$ is weakly compact.

For a non-negative, finite measure $\mu$ and $p \in [1,\infty)$, let $L_p(\X,\mu)$ 
be the Banach space (of equivalence classes) of complex-valued functions with norm
\[\|f\|_{L_p(\X,\mu)} = \Bigl( \int_\X |f|^p \dx \mu \Bigr)^\frac1p < \infty.\]
For the Hilbert space $L_2(\X,\mu)$ we use the notation 
$\langle f,g \rangle_\mu \coloneqq \int_\X f g \dx \mu$.
By $1_A$ we denote the characteristic function of a (measurable) set $A$, defined by
\[
1_A(x)\coloneqq
\begin{cases}
1 & x\in A \\
0 & x\not\in A.
\end{cases}
\]
Let $\mathcal A \subseteq \mathcal B(\X)$ be a sub-$\sigma$-algebra. 
A mapping $g\colon \X \to \R$ is called \emph{conditional expectation}
of $f\in L_1(\X, \mu)$ if $g$ is $\mathcal A$-measurable and for all $A \in \mathcal A$ it holds
\begin{equation} \label{cond_exp}
\int_A g \dx \mu = E(1_A g) = E(1_A f) = \int_A f \dx \mu.
\end{equation}
In this case, we write $g = E(f|\mathcal A)$.

A measure $\nu \in \mathcal M(\X)$ is \emph{absolutely continuous} with respect to $\mu \in \mathcal M(\X)$, 
abbreviated by $\nu \ll \mu$, 
if for every $A \in \mathcal B(\X)$ with $\mu(A) = 0$ we have $\nu(A) = 0$.
If $\mu, \nu \in \mathcal M^+(\X)$ satisfy $\nu \ll \mu$, then the \emph{Radon--Nikodym derivative} $\tfrac{\dx \nu}{\dx \mu}= \sigma_\nu \in L_1(\X,\nu)$  exists and $\nu = \sigma_\nu \mu$.

Let $T\colon \X \to \Y$ be a measurable function, i.e., $T^{-1}(A) \in \mathcal{B}(\X)$ for all $A \in \mathcal{B}(\Y)$.
Then, the \emph{push-forward} measure of $\mu$ by $T$ is defined as $T_\# \mu  \coloneqq \mu \circ T^{-1}$.
A measurable function $f$ on $\Y$ is integrable with respect to  $\nu \coloneqq T_\# \mu$ 
if and only if the composition $f \circ T$ is integrable with respect to the measure $\mu$.
In this case, the integrals coincide, i.e., it holds
\begin{align}\label{eq:push_f}
	\int_\Y f \dx \nu &= \int_{\X} f\circ T \dx \mu.
\end{align}
For the Lebesgue measure $\lambda$, we abbreviate $\dx x$ instead of $\dx \lambda(x)$
throughout the paper.

\section{Optimal Transport and its Regularization} \label{sec:OT}
In this section, we collect results on OT and its regularized version by the  Kullback--Leibler divergence, which we couple with so-called Frobenius--Perron operators in order to segment images
in the subsequent sections.
Moreover, we describe unbalanced OT, which appears to be useful in our numerical examples.
The following discussion is based on \citet{CP2019} and \cite{S2015}.

\paragraph{Optimal Transport}
For a non-negative, symmetric and Lipschitz continuous cost function $c \in C(\X \times \Y)$ 
and given measures $\mu\in \mathcal P(\X) ,\nu \in \mathcal P(\Y)$, the \emph{Monge problem of optimal transport}  consists 
in finding a measurable function $\hat T\colon \X \to \Y$, called \emph{optimal transport map}, that realizes
 \begin{equation}
\inf_{T} \Bigl\{ \int_{\X} c\bigl(x,T(x)\bigr) \dx \mu(x) \colon \; T_{\#}\mu = \nu\Bigr\}.
\end{equation}
If a map  $T\colon \X \to \Y$ solely fulfills $T_{\#}\mu = \nu$, we call it a \emph{transport map}
between $\mu$ and $\nu$.
In contrast to Monge's problem, \emph{Kantorovich's relaxation} allows the mass to be split, i.e., it aims to find a minimizer of 
 \begin{equation}\label{Monge_Kantorovich_problem}
 \OT(\mu,\nu) \coloneqq \inf_{\pi \in\Pi(\mu,\nu)} \int_{\X \times \Y} c(x,y) \dx \pi(x,y),
\end{equation}
where $\Pi(\mu,\nu)$ 
denotes the set of all joint probability measures $\pi$ on $\X \times \Y$ with marginals $\mu$ and $\nu$. 
We refer to the measures of $\Pi(\mu,\nu)$ as \emph{transport plans}
between $\mu$ and $\nu$.
In our setting, the OT functional $\pi \mapsto  \int_{\X \times \Y} c \dx \pi$ is continuous and
\eqref{Monge_Kantorovich_problem} has a solution $\hat \pi$,  called \emph{optimal transport plan}.
Every transport map  $T\colon \X \to \Y$ between $\mu$ and $\nu$ induces a transport plan
$\pi = (\id_\X,T)_{\#}\mu \in \Pi(\mu,\nu)$, i.e.,
$$
\int_{\X \times \Y} h(x,y) \dx \pi(x,y) = \int_\X h\bigl(x,T(x)\bigr)  \dx \mu(x) \quad \text{for all} \, h \in C(\X \times \Y).
$$
Further, the \emph{$c$-transform} $\varphi^{c} \in C(\Y)$ of $\varphi \in C(\X)$ is defined as $\varphi^{c}(y) = \min_{x\in\X}\{c(x,y)-\varphi(x)\}$
and a function $\varphi^{c} \in C(\Y)$ is called \emph{$c$-concave} if it is the $c$-transform of some function $\varphi \in C(\X)$.
The dual formulation of the OT problem \eqref{Monge_Kantorovich_problem} reads
\begin{equation}\label{Wdual}
\OT(\mu,\nu) = \max_{ \substack{(\varphi,\psi)\in C(\X)\times C(\Y)\\
\varphi(x)+\psi(y)\leq c(x,y)}}
\int_{\X}\varphi \dx \mu + \int_{\Y} \psi \dx \nu.
\end{equation}
Maximizing pairs are of the form $(\varphi,\psi) = (\hat \varphi, \hat \varphi^c)$ for a $c$-concave function 
$\hat \varphi$ and fulfill $\hat \varphi(x) + \hat \varphi^c(y) = c(x,y)$ in $\supp(\hat \pi)$, where $\hat \pi$ is any optimal transport plan.
The function $\hat \varphi$ is called 
(Kantorovich) \emph{potential} for the couple $(\mu,\nu)$.
In our applications, we want to focus on settings, where an optimal transport map exists. The following theorem can be found, e.g., in \citet[Thm.~1.17]{S2015}.
                                                                             
\begin{theorem}\label{thm:tmap}
Let $\mu,\nu\in \mathcal P (\X)$, where $\mu$ is absolutely continuous 
with respect to the Lebesgue measure and let $c(x,y)=h(x-y)$ with a strictly convex function $h$.
Then, there exists a unique optimal transport plan $\hat \pi\in \Pi(\mu,\nu)$ that is induced by the optimal transport map $\hat T$.
Moreover, there exists a Kantorovich potential $\hat \phi$ which is linked to $\hat T$ via 
$\hat T(x) =x-(\nabla h)^{-1}(\nabla \hat \phi(x))$.
\end{theorem}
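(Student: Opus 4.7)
The plan is to extract the optimal transport map from the Kantorovich duality \eqref{Wdual}. Existence of an optimal plan $\hat\pi\in\Pi(\mu,\nu)$ and of a $c$-concave potential $\hat\varphi$ satisfying $\hat\varphi(x)+\hat\varphi^c(y)=c(x,y)$ on $\supp(\hat\pi)$ has already been stated. Since the cost $c(x,y)=h(x-y)$ is Lipschitz on the compact product $\X\times\Y$, the $c$-transform construction produces a potential $\hat\varphi$ of the same Lipschitz constant. By Rademacher's theorem, $\hat\varphi$ is then differentiable Lebesgue-a.e., and the hypothesis $\mu\ll\lambda$ promotes this to $\mu$-a.e.\ differentiability. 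Strict convexity of $h$ makes $\nabla h$ defined Lebesgue-a.e.\ and injective where defined, so $(\nabla h)^{-1}$ exists on its image.

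Next, I would fix any $(x_0,y_0)\in\supp(\hat\pi)$ at which $\hat\varphi$ is differentiable. The pointwise inequality $\hat\varphi(x)+\hat\varphi^c(y_0)\le h(x-y_0)$, which holds with equality at $x=x_0$, implies that $x\mapsto h(x-y_0)-\hat\varphi(x)$ attains a minimum at $x_0$. The first-order condition yields $\nabla h(x_0-y_0)=\nabla\hat\varphi(x_0)$, and injectivity of $\nabla h$ then uniquely determines $y_0=x_0-(\nabla h)^{-1}(\nabla\hat\varphi(x_0))$. Setting
\[
\hat T(x) \coloneqq x-(\nabla h)^{-1}(\nabla\hat\varphi(x))
\]
for $\mu$-a.e.\ $x$, this shows that $\hat\pi$ is concentrated on the graph of $\hat T$, whence $\hat\pi=(\id_\X,\hat T)_\#\mu$, and taking the second marginal gives $\hat T_\#\mu=\nu$. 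Uniqueness of $\hat\pi$ follows by a standard convexity trick: if two distinct optimal plans existed, their midpoint would also be optimal but could not be supported on a single graph, contradicting the graph structure just derived.

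The main technical obstacle is coordinating two different almost-everywhere statements: $\hat\varphi$ is differentiable only $\mu$-a.e., while $h$ is differentiable only Lebesgue-a.e., so one must ensure that for $\mu$-a.e.\ $x_0$ and the corresponding $y_0$ from the disintegration of $\hat\pi$ with respect to its first marginal, the vector $x_0-y_0$ lies in the differentiability set of $h$. This is handled by combining the disintegration theorem with $\mu\ll\lambda$ to show that the first marginal of the exceptional set is null. The remaining ingredients—Lipschitz inheritance under $c$-conjugation, weak compactness of $\Pi(\mu,\nu)$ and continuity of the OT functional giving existence of $\hat\pi$, and injectivity of $\nabla h$ from strict convexity—are classical and can be quoted from \citet{S2015}.
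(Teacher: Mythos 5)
The paper does not prove this theorem at all: it simply cites \citet[Thm.~1.17]{S2015} and moves on, so there is no in-paper argument to compare against. Your sketch reproduces the standard argument that underlies that citation---existence of a plan by weak compactness, a $c$-concave Lipschitz potential, Rademacher plus $\mu\ll\lambda$, the first-order condition at support points to identify a graph, and the midpoint trick for uniqueness---and most of it is correct.

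There is, however, a genuine error in the step you single out as the ``main technical obstacle.'' You propose to show that for $\mu$-a.e.\ $x_0$ the vector $x_0-y_0$ lands in the differentiability set of $h$ by arguing that ``the first marginal of the exceptional set is null.'' This cannot work in general. The optimal plan $\hat\pi$ is singular with respect to Lebesgue measure on $\X\times\Y$ (it concentrates on a graph), so the fact that $E=\{(x,y):h\text{ not differentiable at }x-y\}$ is Lebesgue-null on the product gives no control whatsoever on $\hat\pi(E)$, and the projection of $E$ onto $\X$ is typically all of $\X$. Concretely: take a strictly convex $h$ with a single kink at a point $z^\ast$, and let $\nu$ be the pushforward of $\mu$ under the translation $x\mapsto x-z^\ast$. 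By Jensen's inequality and strict convexity, this translation is the unique optimal map, so $\hat\pi$ is concentrated \emph{entirely} on $\{x-y=z^\ast\}$---the exceptional set. Your ``fix'' fails outright, and no disintegration argument based on $\mu\ll\lambda$ can save it.

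The correct resolution, which is the one in \citet{S2015}, avoids differentiating $h$ altogether. From the fact that $x\mapsto h(x-y_0)-\hat\varphi(x)$ is minimized at $x_0$ and $\hat\varphi$ is differentiable at $x_0$, one obtains $\nabla\hat\varphi(x_0)\in\partial h(x_0-y_0)$, where $\partial h$ is the (always non-empty) convex subdifferential. Strict convexity of $h$ gives $\partial h(z_1)\cap\partial h(z_2)=\emptyset$ whenever $z_1\neq z_2$, so the value of $x_0-y_0$ is uniquely determined by $\nabla\hat\varphi(x_0)$; the notation $(\nabla h)^{-1}$ in the statement is to be read as this single-valued inverse of $\partial h$ (equivalently, $\nabla h^\ast$ with $h^\ast$ the Legendre conjugate). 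With that replacement your argument goes through.
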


For $c(x,y) \coloneqq \|x-y\|_p^p$, $p \in [1,\infty)$, the optimal transport cost induces the \emph{$p$-Wasserstein distance}  
\begin{align} \label{eq:OTprimal}
W_p(\mu,\nu) \coloneqq \OT(\mu,\nu)^\frac{1}{p} 
= \biggl( \min_{\pi \in \Pi(\mu,\nu)} 
\int_{\X^2} \| x- y\|_p^p \mathrm{d} \pi(x,y) \biggr)^\frac{1}{p},
\end{align}
which metrizes the weak topology on $\mathcal P(\X)$.
Indeed, due to boundedness of $\X$, we have that $\mu_k \weakly \mu$
if and only if $\lim_{k \to \infty} W_p (\mu_k, \mu) = 0$.
In our numerical examples, we use the Wasserstein-2 distance. 
By Theorem~\ref{thm:tmap}, we have in particular for 
$c(x,y) \coloneqq \tfrac{1}{2}\norm{x-y}_{2}^{2}$
that
$\hat T(x)=x-\nabla \hat \phi(x)$.

\paragraph{Regularized Optimal Transport}
We recall that the \emph{Kullback--Leibler divergence} $\mathrm{KL}\colon{\mathcal M^+}(\X) \times {\mathcal M^+}(\X) \rightarrow \mathbb [0, +\infty]$ is defined for $\mu \ll \nu$ by
\begin{equation} \label{KLdef}
\mathrm{KL} (\mu,\nu) \coloneqq 
\int_{\X} \log\Bigl(\frac{\dx \mu}{\dx \nu} \Bigr) \, \dx \mu + \nu(\X) - \mu(\X) ,
\end{equation}
and by $\mathrm{KL} (\mu,\nu) \coloneqq + \infty$ otherwise.
For $\mu,\nu \in \mathcal{P}(\X)$ the last two summands in \eqref{KLdef} cancel each other.
The \emph{Kullback--Leibler divergence} is strictly convex and weakly lower-semicontinuous with respect to the first variable.
For $ \mu \in {\mathcal P}(\X)$, $ \nu  \in \mathcal P(\Y)$ and $\varepsilon > 0$, the \emph{regularized OT} problem is defined as
\begin{align}
\OT_{\varepsilon}(\mu,\nu) 
\coloneqq&  
\min_{\pi \in \Pi(\mu,\nu)} 
\,
\Big\{
\int_{\X \times \Y}  c\, \mathrm{d} \pi
+ \varepsilon \mathrm{KL} (\pi,\mu \otimes \nu) \Big\}\label{sinkhorn}\\
=&\; \varepsilon
\min_{\pi \in \Pi(\mu,\nu)} 
\, \mathrm{KL} (\pi, \exp(-c/\varepsilon) \mu \otimes \nu) - \varepsilon \int_{\X \times \Y}  \exp(-c/\varepsilon) -1 \dx (\mu \otimes \nu).\label{sinkhorn_kernel} 
\end{align}
A dual formulation is given in the next theorem, see \citet{CP2019,neumayer2020optimal}.

\begin{theorem}\label{prop:dual}
	The (pre-)dual problem of $\OT_\varepsilon$ is given by
	\begin{align}
	\max_{(\varphi,\psi)\in C(\X) \times C(\Y)}
	\Big\{ \int_{\X}\varphi \dx \mu + \int_{\Y} \psi \dx \nu
	- \varepsilon \int_{\X \times \Y} \exp\Bigl(\frac{\varphi(x) + \psi(y) - c(x,y)}{\varepsilon}\Bigr) -1
	\dx (\mu \otimes \nu) \Big\}.\label{pre-dual}
	\end{align}
	The optimal potentials $\hat \varphi_\varepsilon \in C(\X)$, $\hat \psi_\varepsilon \in C(\Y)$ 
	exist and are unique on $\supp(\mu)$ and $\supp(\nu)$, respectively (up to an additive constant).
	They are related to the optimal transport plan $\hat \pi_\varepsilon$ by
	\begin{equation}\label{eq:PDrelation}
	\hat\pi_\varepsilon = \exp\Bigl(\frac{\hat \varphi_\varepsilon(x) + \hat \psi_\varepsilon(y) - c(x,y)}{\varepsilon}\Bigr) \mu \otimes \nu.
	\end{equation}
\end{theorem}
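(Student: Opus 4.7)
The plan is to derive \eqref{pre-dual} by Fenchel--Rockafellar duality applied to the primal \eqref{sinkhorn}, extract the relation \eqref{eq:PDrelation} from the first-order optimality conditions, and argue uniqueness of the optimal potentials from strict convexity of $\KL$ in its first argument.

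First, I would rewrite the primal as $\inf_{\pi \in \mathcal M(\X\times\Y)} F(\pi) + G(A\pi)$, where $F(\pi) \coloneqq \int c \dx \pi + \varepsilon\,\KL(\pi,\mu\otimes\nu)$ (extended by $+\infty$ off $\mathcal M^+$), where $A\colon \mathcal M(\X\times\Y) \to \mathcal M(\X) \times \mathcal M(\Y)$ is the marginal projection $A\pi = (\pi_1,\pi_2)$, and where $G$ is the indicator of $\{(\mu,\nu)\}$. Since $F(\mu\otimes\nu) = \int c \dx(\mu\otimes\nu) < \infty$ and $A(\mu\otimes\nu) = (\mu,\nu)$, the Slater-type qualification is met. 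The Fenchel--Rockafellar theorem then delivers strong duality together with the dual
\[
\sup_{(\varphi,\psi)\in C(\X)\times C(\Y)} -F^\ast(A^\ast(\varphi,\psi)) - G^\ast(-(\varphi,\psi)),
\]
with $A^\ast(\varphi,\psi)(x,y) = \varphi(x)+\psi(y)$ and $G^\ast(-(\varphi,\psi)) = -\int\varphi\dx\mu - \int\psi\dx\nu$. It remains to compute $F^\ast$: writing $\dx\pi = \sigma\dx(\mu\otimes\nu)$, \eqref{KLdef} gives $\KL(\pi,\mu\otimes\nu) = \int(\sigma\log\sigma - \sigma + 1)\dx(\mu\otimes\nu)$, and pointwise maximization of $(\xi-c)\sigma - \varepsilon(\sigma\log\sigma - \sigma + 1)$ over $\sigma\geq 0$ yields the maximizer $\hat\sigma = \exp((\xi-c)/\varepsilon)$ and the value $\varepsilon(\exp((\xi-c)/\varepsilon)-1)$. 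Integrating against $\mu\otimes\nu$ and setting $\xi(x,y) = \varphi(x)+\psi(y)$ reproduces exactly \eqref{pre-dual}.

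For uniqueness and the primal-dual link, strict convexity of $\KL(\cdot,\mu\otimes\nu)$ together with weak-$\ast$ compactness of $\Pi(\mu,\nu)$ and weak-$\ast$ lower semi-continuity of $F$ yields a unique primal minimizer $\hat\pi_\varepsilon$. Existence of dual maximizers $(\hat\varphi_\varepsilon,\hat\psi_\varepsilon)\in C(\X)\times C(\Y)$ follows either from the Sinkhorn fixed-point construction or from coercivity of the dual objective modulo the translation invariance $(\varphi,\psi)\mapsto(\varphi+a,\psi-a)$. The stationarity condition in $\sigma$ identified above yields \eqref{eq:PDrelation} directly. Uniqueness up to an additive constant then follows because $\hat\varphi_\varepsilon(x)+\hat\psi_\varepsilon(y) = c(x,y) + \varepsilon\log\sigma(x,y)$ is determined on $\supp(\mu)\times\supp(\nu)$ by uniqueness of $\hat\pi_\varepsilon$; marginalizing \eqref{eq:PDrelation} gives the Schr\"odinger system $\int \exp((\hat\varphi_\varepsilon+\hat\psi_\varepsilon-c)/\varepsilon)\dx\nu = 1$ $\mu$-a.e.\ (and the symmetric identity), which separates the two arguments up to a single additive constant.

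The main technical obstacle I anticipate is verifying the Fenchel--Rockafellar hypotheses in the infinite-dimensional pairing of $C(\X\times\Y)$ with $\mathcal M(\X\times\Y)$, in particular lower semi-continuity and the continuity/qualification condition needed to rule out a duality gap, together with justifying the pointwise maximization under the integral (by a measurable-selection or interchange argument) when computing $F^\ast$. A secondary subtlety is support-dependent uniqueness: the exponential relation pins down only the sum $\hat\varphi_\varepsilon\oplus\hat\psi_\varepsilon$ on $\supp(\hat\pi_\varepsilon) \subseteq \supp(\mu)\times\supp(\nu)$, so separating the two potentials (and accepting that they remain undetermined off the supports) requires a careful use of the marginal Schr\"odinger identities.
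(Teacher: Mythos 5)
The paper itself does not prove this theorem --- it states it and refers to \citet{CP2019,neumayer2020optimal}, so there is no in-paper argument against which to compare. Evaluated on its own terms, your outline has the right ingredients (Fenchel conjugates computed correctly, the pointwise maximization $\sup_{s\ge 0}(\xi-c)s-\varepsilon(s\log s-s+1)=\varepsilon(\mathrm{e}^{(\xi-c)/\varepsilon}-1)$ is right, and the uniqueness-up-to-a-constant argument is sound once $\hat\varphi_\varepsilon\oplus\hat\psi_\varepsilon$ is pinned down on $\supp\mu\times\supp\nu$), but the step that makes the duality machinery go through is wrong as written.

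The claim ``Since $F(\mu\otimes\nu)<\infty$ and $A(\mu\otimes\nu)=(\mu,\nu)$, the Slater-type qualification is met'' does not hold in the direction you have set things up. Rockafellar's qualification requires $G$ to be finite \emph{and continuous} at some $A\pi_0$ with $F(\pi_0)<\infty$; but $G=\iota_{\{(\mu,\nu)\}}$ is the indicator of a singleton of $\mathcal M(\X)\times\mathcal M(\Y)$, which is never continuous in an infinite-dimensional normed space, and the Attouch--Br\'ezis interiority condition $0\in\mathrm{core}\bigl(\mathrm{dom}\,G-A\,\mathrm{dom}\,F\bigr)$ fails for the same reason. Merely exhibiting a feasible $\pi_0$ with finite value gives weak duality, not strong duality or dual attainment. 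The standard (and essentially forced) remedy, and the one used in the references the paper cites, is to run Fenchel--Rockafellar in the opposite direction: take $E=C(\X)\times C(\Y)$, $\Lambda(\varphi,\psi)=\varphi\oplus\psi\in C(\X\times\Y)$, primal objective $\Phi(\varphi,\psi)+\Psi(\Lambda(\varphi,\psi))$ with $\Phi$ the (continuous) linear part and $\Psi(u)=\varepsilon\int(\exp((u-c)/\varepsilon)-1)\dx(\mu\otimes\nu)$, which is finite and norm-continuous on all of $C(\X\times\Y)$. The qualification is then trivial, strong duality holds, and the dual maximizer --- which lives in $C(\X\times\Y)^\ast=\mathcal M(\X\times\Y)$ --- is the transport plan. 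This is also why the theorem calls \eqref{pre-dual} the \emph{pre-}dual.

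Finally, note that the duality theorem in this direction gives attainment only on the measure side. Existence of optimal \emph{continuous} potentials $(\hat\varphi_\varepsilon,\hat\psi_\varepsilon)$ --- i.e. attainment of the $\sup$ in \eqref{pre-dual} --- is the genuinely nontrivial part of the statement and does not come from Fenchel--Rockafellar at all; it requires the Schr\"odinger system / Sinkhorn fixed-point argument (or a compactness argument using the uniform modulus of continuity inherited from the Lipschitz cost). You correctly name this route, but it is mentioned in a single clause and would need to be developed into the main body of the proof rather than left as a remark.
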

Under certain assumptions, see Proposition \ref{prop:conv-reg}, the regularized OT plans $\hat \pi_\varepsilon$ converge weakly to the  OT plan $\hat \pi$ as $\varepsilon$ goes to zero. 
The corresponding discrete optimization problem can be solved efficiently using Sinkhorn's algorithm, see \citet{C2013,FSVATP2019} for more details. 

\paragraph{Unbalanced Optimal Transport}

In practical applications, we often have to deal with noisy data.
In this case, the computation of a (regularized) optimal transport plan is often unreasonable. e.g., if $\mu, \nu$ are positive measures such that $\mu(\X)\neq \nu(\Y)$.
To resolve this issue, we can use unbalanced regularized optimal transport, which allows small deviations between the input measures and the marginals of the associated transport plan.
This approach corresponds to the optimization problem
\begin{equation}\label{eq:unbalanced_entropic_ot}
\min_{\pi\in \mathcal{M}^+(\X \times \Y)} \left\{ \int_{\X\times \Y} c\dx \pi + \varepsilon \KL(\pi\vert \mu\otimes\nu) 
+ \kappa \bigl(\KL(\pi_0 \vert \mu) + \KL(\pi_1\vert \nu)\bigr)\right\},
\end{equation}
where $\pi_0$ and $\pi_1$ denote the marginals with respect to the first and second component, respectively, see \citet{CPSV18}.
Similarly as in the balanced case, we can rewrite the problem in dual form
\begin{align}
\max_{(\varphi,\psi)\in C(\X) \times C(\Y)} \left\{\vphantom{\int_\X}\right.&\kappa \biggl(\int_\X 1 - \exp\bigl(-\tfrac{\varphi}{\kappa}\bigr) \dx \mu +\int_\Y 1- \exp\bigl(-\tfrac{\psi}{\kappa}\bigr)\dx\nu\biggr)\\ &- \left.\varepsilon\int_{\X\times\Y} \exp\bigl(\tfrac{\varphi(x) + \psi(y) -c(x, y)}{\varepsilon}\bigr) - 1 \dx (\mu \otimes \nu)\right\}
\end{align}
and the optimal solution $\hat \pi_{\varepsilon,\kappa}$ of the primal problem is related to the optimal solutions $\hat \varphi_{\varepsilon,\kappa}, \hat \psi_{\varepsilon,\kappa}$ of the dual problem via
\begin{equation} \label{eq:ot-unbalanced}
\hat \pi_{\varepsilon,\kappa} = \exp\bigl(\tfrac{\hat \varphi_{\varepsilon,\kappa}(x) + \hat \psi_{\varepsilon,\kappa}(y) -c(x,y)}{\varepsilon}\bigr)\mu \otimes \nu.
\end{equation}
Further, the formal limit $\kappa \to \infty$ results in the original balanced problem.
Note that the OT plan $\hat \pi_{\varepsilon,\kappa}$ does not have marginals $\mu$ and $\nu$.
However, if $\kappa$ is large, the marginals $\mu_\kappa$ and $\nu_\kappa$ of $\hat \pi_{\varepsilon,\kappa}$ 
are close to the original input measures and in some sense we can interpret them as smoothed versions of those.
Similarly as for regularized OT, we can use a variant of the Sinkhorn algorithm to efficiently solve the corresponding discrete problem, see \citet{SFVTP19}.

\section{Segmentation Model}\label{sec:segm}
%
Following the lines of \citet{froyland2013analytic}, we introduce the segmentation model that we want to apply for coherent structure detection.
More precisely, we use the concept of \emph{transfer operators}, which are rigorously introduced in Section~\ref{sec:frobenius_perron}.
It is sufficient to note that the transfer operator $L$ below is the adjoint of the pullback operator $f\mapsto f\circ T$ as an operator between the respective $L^p$ spaces.
Further, it is a functional extension of the push-forward operator $T_{\#}$ from Section~\ref{sec:basics}.

Assume that we are given full information about some dynamical system by means of its linear, bounded transfer operator 
$L\colon L_2(\X,\mu) \to  L_2(\Y,\nu)$. Let $\dot\cup$ denote the disjoint union of sets.
We aim to find measurable partitions $\X= X_1\dot\cup X_2$ and $\Y= Y_1\dot\cup Y_2$ so that it holds for $k=1,2$ that
\begin{itemize}
\item $L 1_{X_k} = 1_{Y_k}$ (\emph{coherence}) and
\item $\mu(X_k) = \nu(Y_k)$ (\emph{mass conservation}). 
\end{itemize}
Note that the first condition readily implies $L 1_{\X} = 1_{\Y}$.
Based on these conditions, a natural model in order to find the partitions would be
	\begin{align}\label{eq:coh_set_problem}
	\max_{\substack{X_1\dot\cup X_2=\X,\, Y_1\dot\cup Y_2=\Y}}   \biggl\{\frac{\langle L 1_{X_1}, 1_{Y_1} \rangle_\nu}{\mu(X_1)} 
	+ \frac{\langle L 1_{X_2}, 1_{Y_2}\rangle_\nu   }{\mu(X_2)} \biggr\},
	\end{align}
	trying to optimize the alignment of $L 1_{X_k}$ and $1_{Y_k}$ in $\Y$, $k=1, 2$. The expression $\langle L \frac{1_{X_k}}{\mu(X_k)}, 1_{Y_1} \rangle_\nu$ quantifies the probability that a $\mu$-distributed random initial condition, being in $X_k$, is mapped by the dynamics into $Y_k$.
	Using standard arguments, see \citet{froyland2013analytic} or also \citet{Luxburg2007} in connection with 
	 graph cuts, problem~\eqref{eq:coh_set_problem} can be rewritten as 
	\begin{align}\label{eq:coh_set_problem_1}
	1 + \max_{\substack{X_1\dot\cup X_2 = \X,\, Y_1\dot\cup Y_2=\Y}} \langle L\psi_{X_1, X_2}, \psi_{Y_1, Y_2} \rangle_\nu,
	\end{align}
	where
	$\psi_{X_1, X_2} \coloneqq \sqrt{\tfrac{\mu(X_2)}{\mu(X_1)}} 1_{X_1} - \sqrt{\tfrac{\mu(X_1)}{\mu(X_2)}} 1_{X_2}$ and 
	$\psi_{Y_1, Y_2} \coloneqq \sqrt{\tfrac{\nu(Y_2)}{\nu(Y_1)}} 1_{Y_1} - \sqrt{\tfrac{\nu(Y_1)}{\nu(Y_2)}} 1_{Y_2}$.
	These functions fulfill
	\[
	\|\psi_{X_1, X_2}\|_{L_2(\X,\mu)} = \mu (\X)^\frac12, \quad
	\|\psi_{Y_1, Y_2}\|_{L_2(\Y,\nu)} = \nu (\Y)^\frac12, \quad
	\langle \psi_{X_1, X_2}, 1_\X \rangle_\mu = \langle \psi_{Y_1, Y_2}, 1_\Y \rangle_\nu = 0.
	\]
	Clearly, if we could find $\psi_{X_1, X_2}$ with corresponding $\psi_{Y_1, Y_2}$, the desired partition can be obtained by thresholding these functions at zero.
	Unfortunately, problem \eqref{eq:coh_set_problem} or equivalently \eqref{eq:coh_set_problem_1} is hard to solve, 
	indeed NP-hard in its finite dimensional variant.
	Instead, we relax the problem by replacing 
	$\psi_{X_1, X_2}$ with $f/\|f\|_{L_2(\X,\mu)}  \in L_2(\X,\mu)$ and 
	$\psi_{Y_1, Y_2}$ with $g/\|g\|_{L_2(\Y,\nu)}  \in L_2(\X,\nu)$.
	Then, the relaxed problem 
	\begin{align} \label{problem}
	 \max_{
	\substack{
	(f,g) \in L_2(\X,\mu) \times L_2(\Y,\nu)
	} }
	\biggl\{ 
	\frac{ \langle L f, g \rangle_\nu}{ \Vert f\Vert_{L_2(\X,\mu)} \Vert g \Vert_{L_2(\Y,\nu)} }: 
	\langle f, 1_\X \rangle_\mu = \langle g, 1_\Y \rangle_\nu = 0 
	\biggr\}
	\end{align}
allows for fuzzy instead of hard assignments.
In the following, we set ${\mathcal L} \coloneqq L^* L$ and assume that 
\begin{enumerate}[label=(A\arabic*)]
\item\label{enum:op_marg} $L 1_\X = 1_\Y$;
\item\label{enum:adjoint_marg} $\mathcal L 1_\X = \|L\|^2 1_\X$, which by \ref{enum:op_marg} is equivalent to $L^* 1_\Y = \|L\|^2 1_\X$, where we abbreviate
$\|L\| \coloneqq \|L\|_{L_2(\X,\mu)\to L_2(\Y,\nu)}$ for the operator norm;
\item\label{enum:operator_cpt_simple} $L$ is compact and the largest eigenvalue $\lambda_1 = \|L\|^2$ of ${\mathcal L} = L^* L$ is simple.
\end{enumerate}
Then, problem \eqref{problem} is equivalent to finding the second largest singular value of $L$
and the corresponding left and right singular vectors. This can be seen as follows:
The compact, self-adjoint, positive operator $\mathcal L$
has a non-negative spectrum, where the countable set of eigenvalues 
$\lambda_1 > \lambda_2 \ge \ldots \ge 0$ of $\mathcal L$
fulfills the Courant's minimax principle \citep[Thm.~4, p.~212]{BS1986},
\[
\lambda_k = \min_{V: \mathrm{codim} V \le k-1} \max_{0\not = f \in V} 
\frac{\langle \mathcal{L} f,f \rangle_\mu}{\langle f,f \rangle_\mu} =\min_{V: \mathrm{codim} V \le k-1} \max_{0\not = f \in V} \frac{ \| L f\|^2_{L_2(\Y,\nu)} }{ \|f\|^2_{L_2(\X,\mu)} }, \quad k \in \mathbb N.
\]
Using the definition of the norm, this can be rewritten as
\begin{equation}\label{courant}
\sigma_k \coloneqq \lambda_k^{\frac12} 
= 
\min_{V: \mathrm{codim} V \le k-1} \max_{ \mystack{0\not = f \in V}{0 \not = g \in L_2(\Y,\nu)} }
\frac{ \langle L f,g \rangle_\nu }{ \|f\|_{L_2(\X,\mu)} \|g\|_{L_2(\Y,\nu)}}, \quad k \in \mathbb N.
\end{equation}
The values $\sigma_k$ are the non-zero singular values of $L$, and the maximizing functions $f$ and $g$ are so-called 
left and right singular vectors of $L$ belonging to $\sigma_k$,
respectively. Note that the left singular vectors are the eigenvectors of $\mathcal L$ and 
right singular vectors are given by $g = Lf$.
By assumptions~\ref{enum:adjoint_marg} and \ref{enum:operator_cpt_simple}, $f = 1_\X$ is an eigenvector of $\mathcal L$ belonging to the simple, largest eigenvalue $\lambda_1$
so that the eigenvectors belonging to smaller eigenvalues are perpendicular to~$1_\X$.
Thus,
 \eqref{courant} becomes
\begin{equation}\label{courant_1}
\sigma_2 
= 
\max_{ \mystack{0\neq f \in L_2(\X, \mu)}{0 \neq g \in L_2(\Y,\nu)} }
\biggl\{ \frac{ \langle L f,g \rangle_\nu }{ \|f\|_{L_2(\X,\mu)} \|g\|_{L_2(\Y,\nu)} }:
\langle f, 1_\X \rangle_\mu = \langle g, 1_\Y \rangle_\nu = 0 \biggr\},
\end{equation}
where the orthogonality condition on $g$ can be added since $f\perp 1_\X$ implies $Lf\perp 1_\Y$:
To summarize, if $L\colon L_2(\X,\mu) \to  L_2(\Y,\nu)$
fulfills
the assumptions~\ref{enum:op_marg}--\ref{enum:operator_cpt_simple}, then model~\eqref{problem} can be solved by finding the left and right singular functions of $L$ belonging to the second largest singular value of $L$.
Later, we focus on compact operators arising from non-negative kernels $K\in L_2(\X\times \Y,\mu\otimes\nu)$ via
$$
(L f)(y) \coloneqq \int_\X f(x) K(x, y) d\mu (x), \quad (L^* g)(x) \coloneqq \int_\Y g(y) K(x, y) d\nu (y).
$$
Then, \ref{enum:op_marg}--\ref{enum:operator_cpt_simple} are met if the following corresponding assumptions hold for $K$:
\begin{enumerate}[label=(K\arabic*)]
		\item\label{enum:nu_marg} $\int_\X K(x, \cdot) d\mu(x) = 1_\Y\quad\nu$-a.e.,
		\item \label{enum:mu_marg}$\int_\Y K(\cdot, y) d\nu(y) = \|L\|^2 \,  1_\X\quad \mu$-a.e.
		\item \label{enum:cpt_simple} The largest eigenvalue of $L^*L$ is simple.
\end{enumerate}
\begin{remark}[Relation to graph cut segmentation]
Partitioning data using the eigenvector corresponding to the second largest eigenvalue of the so-called graph Laplacian operator, which is self-adjoint for undirected graphs,
is a frequently applied method, e.g., in image processing.
As illustrated by Fig.~\ref{fig:double_gyre_clustering} in the introduction, our approach is related to dynamical data sets,
and we aim to partition the sets simultaneously at different times using the singular vector pairs
of a not necessarily self-adjoint transfer operator, whose construction is addressed in the next sections.
\end{remark}	
\section{Frobenius--Perron Operators}\label{sec:frobenius_perron}
In this section, we consider the so-called Frobenius--Perron operator $L=P_T$ of a measurable map $T\colon \X \to \Y$.
In Subsection  \ref{sec:frobenius_perron_1}, we will see that this transfer operator $L$ 
fulfills our assumptions \ref{enum:op_marg} and \ref{enum:adjoint_marg} provided that the operator $T$
is a transport map between measures $\mu \in \mathcal P(\X)$ and $\nu \in \mathcal P(\Y)$.
Unfortunately, the resulting transfer operator will be useless for our segmentation task since it has only singular values $1$ and thus it violates \ref{enum:operator_cpt_simple}.
As a possible solution, we provide a kernel-based definition of the Frobenius--Perron operator and propose two approaches for smoothing the transport maps.
The first technique is suggested in Subsection \ref{sec:frobenius_perron_2}.
It adds a small random perturbation to the kernel of the operator, an idea that goes back to \citet{froyland2013analytic}.
The second approach, which we will favor in our numerical examples, uses regularized optimal transport plans.

\subsection{Frobenius--Perron Operators and Transport Maps}\label{sec:frobenius_perron_1}
For completeness, we introduce the Frobenius--Perron operator between two Lebesgue spaces $L^p(\X,\mu)$ and~$L^p(\Y,\nu)$. Analogous results for $\X=\Y$ and $\mu = \nu$ can be found, e.g., in~\citet[Chap.~4]{boyarsky1997laws} or~\citet{brin2002introduction,LasotaAndrzej1994CFaN}.

Assume that $T\colon \X \to \Y$ is \emph{non-singular} with respect to $\mu \in \mathcal{P}(\X)$  and $\nu\in \mathcal{P}(\Y)$, 
namely that $\nu(A) = 0$ implies $\mu\bigl(T^{-1} (A)\bigr) = 0$ for all $A \in \mathcal{B}(\Y)$.
It is immediately clear that a transport map $T$ is non-singular with respect to $\mu$ and $\nu$.
For a non-singular, measurable map $T\colon \X \to \Y$, the linear operator
$P_T\colon L_1(\X,\mu) \to L_1(\Y,\nu)$ 
is called  \emph{Frobenius--Perron operator} of $T$ if it satisfies
\begin{equation}\label{eq:PerFro}
\int_A P_T \psi \dx \nu = \int_{T^{-1} (A)} \psi \dx \mu   \quad \mathrm{for \; all} \quad A \in \mathcal{B}(\Y) .
\end{equation}
In other words, the function $P_T \psi \in L_1(\Y, \nu)$ is given for each $\psi \in L_1(\X,\mu)$ as the density of $\smash{T_{\#}(\psi\mu) = \int_{T^{-1}(\cdot)}}\psi\dx \mu$ with respect to $\nu$, which exists because
\[
\nu(A) = 0 \implies \mu(T^{-1}(A)) = 0 \implies (T_{\#}\psi\mu)(A) = 0 \quad \mathrm{for \; all} \quad A \in \mathcal{B}(\Y),
\]
i.e., $T_{\#}(\psi\mu) \ll \nu$.
In particular, integrals are preserved $\int_\Y P_T \psi \dx \nu = \int_{\X} \psi \dx \mu$.
Note that often the Frobenius--Perron operator is defined as an operator just mapping from $L_1(\X,\mu)$ to itself, see \citet{LasotaAndrzej1994CFaN}.
As $T$ is non-singular, the linear operator
$U_T \colon L_\infty(\Y,\nu) \to L_\infty(\X, \mu)$ given by
\begin{equation} \label{eq:koopmann}
U_T \psi = \psi \circ T
\end{equation}
is also well-defined. 
It is known as \emph{Koopman operator} and the adjoint of $P_T$.

\begin{remark} 
\citet{froyland2013analytic} defined the Frobenius--Perron operator $P\colon L_1(\X, \lambda) \to L_1(\Y, \lambda)$ 
for a map $T$ that is non-singular
with respect to the Lebesgue measure, i.e.,
\begin{equation}\label{eq:PerFro_leb}
\int_A (P \psi) (y) \dx y
= \int_{T^{-1} (A)} \psi(x) \dx x
\quad \mathrm{for \; all} \quad A \in \mathcal{B}(\Y).
\end{equation}
For $\mu$ and $\nu$ being absolutely continuous with respect to the Lebesgue measure
with densities $\sigma_\mu$ and $\sigma_\nu$, respectively, a transfer operator $L\colon L_1(\X, \mu) \to L_1(\Y, \nu)$ was determined by $L\psi \coloneqq P(\psi \sigma_\mu)/\sigma_\nu$. 
This implies for all $A \in \mathcal{B}(\Y)$ that
$$
\int_A L \psi (y)  \dx \nu(y)  = \int_A P( \psi \sigma_\mu)(y)  \dx y
= 
\int_{T^{-1}(A)}  \psi(x) \sigma_\mu(x) \dx x
= \int_{T^{-1}(A)}  \psi(x) \dx \mu(x).
$$
Comparing this with \eqref{eq:PerFro}, we see that for this special case our Frobenius--Perron operator coincides with the above transfer operator 
$P_T = L$.
\end{remark}

If $T$ is a transport map between $\mu$ and $\nu$, the following proposition shows that $P_T$ has the properties \ref{enum:op_marg} and \ref{enum:adjoint_marg},
but unfortunately only singular values $1$, thus violating \ref{enum:operator_cpt_simple}.

\begin{proposition}\label{lem:id}
Let a measurable map $T\colon \X \to \Y$ fulfill $\nu = T_\# \mu$ and let $P_T$, $U_T$ be defined by \eqref{eq:PerFro} and \eqref{eq:koopmann}, respectively.
Then, the following holds true:
\begin{itemize}
	\item[i)] For any $\psi \in L_1(\X,\mu)$ we have $U_T \circ P_T (\psi) = E(\psi|T^{-1}(\mathcal B(\Y)))$, which is the conditional expectation of $\psi$ with respect to the $\sigma$-algebra $T^{-1}(\mathcal B(\Y))$.
	If $T\colon \X \to \Y$ maps Borel sets of $\X$ to those of $\Y$ and is also $\mu$-essentially injective, i.e., there exists a measurable set $B$ with $\mu(B) = 1$ on which $T$ is injective, this simplifies to $U_T \circ P_T = \text{Id}$.
	\item[ii)] For any $p \in [1, \infty]$, the Frobenius--Perron operator, as well as its adjoint, 
	can be restricted to $P_T\colon L_p(\X,\mu) \to L_p(\Y,\nu)$ and $U_T\colon L_p(\Y,\nu)\to L_p(\X,\mu)$, respectively.
	\item[iii)] The operators satisfy $P_T 1_\X = 1_\Y$ and $U_T 1_\Y = 1_\X$.
\end{itemize}
\end{proposition}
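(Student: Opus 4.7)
The plan is to prove the items in the order iii), i), ii), since each builds on the previous one in terms of what it requires.

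For iii), both identities are direct from definitions: for any $x$, $(U_T 1_\Y)(x) = 1_\Y(T(x)) = 1$, giving $U_T 1_\Y = 1_\X$. For the other identity, I would test $P_T 1_\X$ against sets: for $A \in \mathcal B(\Y)$,
\begin{equation}
\int_A P_T 1_\X \dx \nu = \int_{T^{-1}(A)} 1_\X \dx \mu = \mu(T^{-1}(A)) = (T_\#\mu)(A) = \nu(A) = \int_A 1_\Y \dx\nu,
\end{equation}
so $P_T 1_\X = 1_\Y$ $\nu$-a.e.\ by the uniqueness guaranteed by the defining relation \eqref{eq:PerFro}.

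For i), I would verify the two properties that characterize the conditional expectation. First, $U_T P_T \psi = (P_T \psi) \circ T$ is clearly $T^{-1}(\mathcal B(\Y))$-measurable since $P_T\psi$ is $\mathcal B(\Y)$-measurable. Second, I would check the integral identity on generators of $T^{-1}(\mathcal B(\Y))$, i.e.\ on sets $T^{-1}(A)$ for $A \in \mathcal B(\Y)$. Using the push-forward identity \eqref{eq:push_f} with $\nu = T_\#\mu$, writing $1_{T^{-1}(A)} = 1_A \circ T$,
\begin{equation}
\int_{T^{-1}(A)} (P_T \psi)\circ T \dx \mu = \int_\X \bigl(1_A \cdot P_T \psi\bigr)\circ T \dx \mu = \int_\Y 1_A \cdot P_T\psi \dx\nu = \int_A P_T\psi \dx\nu = \int_{T^{-1}(A)} \psi \dx \mu,
\end{equation}
where the last equality is \eqref{eq:PerFro}. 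By uniqueness of the conditional expectation this gives $U_T P_T \psi = E(\psi\mid T^{-1}(\mathcal B(\Y)))$. For the injective case, the key observation is that if $T$ is injective on a measurable set $B$ of full $\mu$-measure and maps Borel sets to Borel sets, then for every $C \in \mathcal B(\X)$ the set $T(C\cap B)$ is Borel in $\Y$ and $C \cap B = T^{-1}(T(C\cap B)) \cap B$, so $T^{-1}(\mathcal B(\Y))$ agrees with $\mathcal B(\X)$ up to $\mu$-null sets. Hence $\psi$ itself is already $T^{-1}(\mathcal B(\Y))$-measurable, and $E(\psi\mid T^{-1}(\mathcal B(\Y))) = \psi$, giving $U_T\circ P_T = \mathrm{Id}$.

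For ii), the Koopman operator is easy: \eqref{eq:push_f} with $\nu = T_\#\mu$ yields $\|U_T \psi\|_{L_p(\X,\mu)}^p = \int_\X |\psi\circ T|^p \dx\mu = \int_\Y |\psi|^p \dx\nu = \|\psi\|_{L_p(\Y,\nu)}^p$, so $U_T$ is an isometry for all $p \in [1,\infty]$. For $P_T$, I would combine i) with the fact that conditional expectation is an $L_p$-contraction: by i), $\|U_T P_T \psi\|_{L_p(\X,\mu)} = \|E(\psi\mid T^{-1}(\mathcal B(\Y)))\|_{L_p(\X,\mu)} \leq \|\psi\|_{L_p(\X,\mu)}$, and since $U_T$ is an isometry, $\|P_T \psi\|_{L_p(\Y,\nu)} = \|U_T P_T \psi\|_{L_p(\X,\mu)} \leq \|\psi\|_{L_p(\X,\mu)}$. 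A density argument handles the case $p = \infty$ if needed.

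The main obstacle is the essentially-injective reduction in i): one has to argue carefully that $T$ mapping Borel sets to Borel sets, combined with injectivity on a full-measure set, suffices to identify $T^{-1}(\mathcal B(\Y))$ with $\mathcal B(\X)$ modulo $\mu$-null sets. The rest is a clean chain of applications of the push-forward identity \eqref{eq:push_f} and the defining relations for $P_T$ and for the conditional expectation.
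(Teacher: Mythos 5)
Your proposal is correct and follows essentially the same route as the paper: the integral computation identifying $U_T\circ P_T\psi$ with the conditional expectation is identical, the $L_p$-boundedness of $P_T$ is obtained from the isometry of $U_T$ together with Jensen (i.e.\ the $L_p$-contraction property of conditional expectation), and iii) is the same direct check. One small remark: no density argument is needed for $p=\infty$, since $\|E(\psi\mid\mathcal A)\|_\infty\le\|\psi\|_\infty$ holds directly and $U_T$ is an $L_\infty$-isometry because $\nu=T_\#\mu$ makes $T^{-1}$ preserve null sets.
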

\begin{proof}
i) Recall the definition of conditional expectations in \eqref{cond_exp}.
We have for any $A \in \mathcal B(\Y)$ that
\begin{align*}
\int_{T^{-1}(A)} \!\psi \dx \mu {\stackrel{\eqref{eq:PerFro}}{=}} \int_A P_T \psi \dx \nu  
{\stackrel{\eqref{eq:push_f}}{=}} \int_{\X} (1_{A} P_T \psi)\circ T \dx \mu= \int_{T^{-1}(A)} (P_T \psi)\circ T \dx \mu 
=  \int_{T^{-1}(A)} \! U_T \circ P_T (\psi) \dx \mu,
\end{align*}
which implies the first claim.
If the conditions for the second part are fulfilled, we can choose $A = T(A' \cap B)$ for any $A' \in \mathcal B(\X)$.
Then, the second claim follows from
\[\int_{A'} \psi \dx \mu = \int_{A' \cap B} \psi \dx \mu = \int_{A'\cap B} U_T \circ P_T (\psi) \dx \mu
= \int_{A'} U_T \circ P_T (\psi) \dx \mu.\]

ii)
Using part i) and the Jensen inequality for conditional expectations, it holds for any $p \in [1, \infty)$ that
\begin{align}
\int_\Y \vert P_T\psi\vert^p \dx \nu &{\stackrel{\eqref{eq:push_f}}{=}} \int_\X \vert U_T \circ P_T(\psi) \vert^p \dx \mu = \int_\X \bigl\vert E\bigl(\psi|T^{-1}(\mathcal B(\Y))\bigr)\bigr\vert^p \dx \mu\\
&\leq \int_\X E\bigl(\vert \psi \vert^p |T^{-1}(\mathcal B(\Y))\bigr) \dx \mu = \int_\X \vert \psi\vert^p \dx \mu < \infty
\end{align}
and
\begin{align}
\int_\X \vert U_T\psi\vert^p \dx \mu &= \int_\X \vert \psi \circ T \vert^p \dx \mu  = \int_\Y \vert \psi\vert^p \dx \nu < \infty.
\end{align}
For $p = \infty$, we only need to consider $P_T$ and estimate
\begin{align}
\Vert P_T\psi\Vert_{L_\infty(\Y,\nu)} = \Vert (P_T\psi)\circ T\Vert_{L_\infty(\X,\mu)} = \bigl\Vert  E\bigl(\psi|T^{-1}(\mathcal B(\Y))\bigr) \bigr\Vert_{L_\infty(\X,\mu)} \leq \Vert \psi \Vert_{L_\infty(\X,\mu)} < \infty.
\end{align}

iii) First, we have $\int_A P_T 1_\X \dx \nu = \mu\bigl(T^{-1}(A)\bigr) = \nu(A)$, which readily implies $P_T 1_\X = 1_\Y$.
Second, we obtain by definition that $U_T 1_\Y = 1_\Y \circ T = 1_\X$.
\end{proof}

By Proposition~\ref{lem:id} i), we conclude that $L=P_T$ is not suited for our segmentation problem: Since $U_T\circ P_T(1_A) = 1_A$ with $A\in T^{-1}(\mathcal B (\Y))$, the leading eigenvalue $1$ of $L^*L$ is in general not simple.
Additionally, this also implies that neither $P_T$ nor $U_T$ can be compact as otherwise the identity operator would be compact.
A possible remedy is to smooth the operator~$P_T$.
To this end, we need a kernel representation of $P_T$.
Using the family of probability measures
$(\pi_x)_{x \in \X}$ with $\pi_x\in \mathcal P(\Y)$ given by $\pi_x = \delta_{T(x)}$
for all $x\in \X$, condition \eqref{eq:PerFro} can be rewritten as
\begin{equation}\label{kernel_def}
\int_A P_T \psi \dx \nu = \int_{\X} \pi_x(A)\psi(x) \dx \mu(x) \quad \mathrm{for \; all} \quad A \in \mathcal B(\Y).
\end{equation}
Note that $(\pi_x)_{x\in \X}$ are the disintegrations of the transport plan $\pi$ that is induced by the transport map $T$ with respect to $\mu$.
Here, the measure-valued function $x\mapsto \pi_x$ is usually called the \emph{transition function}.

This can be generalized to non-deterministic systems by choosing other families of probability measures.
Assuming that each probability measure $\pi_x$ has the density $K(x,\cdot)$ w.r.t.~$\nu$ with $K\in L_1(\X \times \Y,\mu \otimes \nu)$,
we obtain for all $A \in \mathcal{B}(\Y)$ by \eqref{kernel_def} and Fubini's theorem
\begin{align*}
\int_A (P_K \psi) (y) \dx \nu(y) = \int_{\X} \int_A K(x,y) \psi(x) \dx \nu(y) \dx \mu(x) = \int_A \int_{\X} K(x,y) \psi(x) \dx \mu(x) \dx \nu(y).
\end{align*}
Hence, we conclude that
\begin{equation}
	(P_K \psi) (y) = \int_{\X} K(x,y) \psi(x) \dx \mu(x)
\end{equation}
holds $\nu$-a.e., which corresponds to the definition of the Frobenius--Perron operator for kernels, see also \citet{KNKW18}.
The corresponding adjoint operator $U_K \colon L_\infty(\Y,\nu) \to L_\infty(\X, \mu)$ is given by
\[(U_K \psi) (x) = \int_{\Y} K(x,y) \psi(y) \dx \nu(y).\]
In this framework, we call $K\in L_1(\X \times \Y,\mu \otimes \nu)$ a \emph{transition kernel with reference measures $\mu$ and $\nu$}, to which $P_K$ as defined above is associated.

There is also a dynamical reason for the introduction of the transition density $K(x, \cdot)$.
In the following, we will view them as $T(x)$ with a small random perturbation.
As shown by \citet{froyland2013analytic}, segmentation of such perturbed dynamics yields partitions $\X=X_1\dot\cup X_2$ and $\Y=Y_1\dot\cup Y_2$ that are robust to small noise.

\subsection{Smoothed Transition Kernels}\label{sec:frobenius_perron_2}
In the previous subsection, we have seen that the operators $P_T$ arising from transport maps fulfill
only some of the required properties to serve as operators in our segmentation task.
Next, we are interested in blurred kernels that fulfill \ref{enum:nu_marg} and \ref{enum:mu_marg} and have a nontrivial spectrum.
Compared to the approach proposed by \citet{froyland2013analytic}, we prefer to avoid the domain padding 
and adapt the weights in the kernels instead.
This leads to modifications of the corresponding proofs.

For $\varepsilon > 0$ and 
a measurable map $T\colon \X \to \Y$ with $\nu = T_\# \mu$,  we define $\eta_{\varepsilon,x} \coloneqq 1_{B_\varepsilon(x)}/\lambda(B_\varepsilon(x))$
and $k_\varepsilon\colon \X \times \Y \to \R$ by
\begin{align}
k_\varepsilon (x,y) &\coloneqq \int_\X \eta_{\varepsilon,x}(z) \eta_{\varepsilon,T(z)}(y) \dx z
= 
\frac{1}{\lambda(B_\varepsilon(x))} \int_{B_\varepsilon(x) \cap T^{-1} \left(B_\varepsilon(y) \right)} 
\frac{1}{\lambda\left(B_\varepsilon \left(T(z) \right)\right)} \, \dx z.
\end{align}
Due to \eqref{eq:boundary}, $k_\varepsilon$ is bounded from above by
$C_{\varepsilon} \coloneqq 1/\min_{y \in \Y} \lambda(B_\varepsilon(y)) = C_\Y / \max_{y \in \Y} \lambda(B_\varepsilon(y))$.
Hence, $\sigma_\varepsilon (y) \coloneqq \int_\X k_\varepsilon(x,y) \dx \mu(x)$ is well-defined and bounded and we can introduce a new measure $\nu_\varepsilon$ via $\nu_\varepsilon \coloneqq \sigma_\varepsilon \lambda$.
Finally, our smoothed kernel $K_\varepsilon\colon \X \times \Y \to \R$
reads as
\begin{equation} \label{eq:kernel_smoothed}
K_{\varepsilon}(x,y) 
\coloneqq 
\frac{k_\varepsilon(x,y)}{\sigma_\varepsilon(y)}.
\end{equation}

The following proposition shows that the
operator $L_\varepsilon\colon L_2(\X,\mu) \to L_2(\Y,\nu_{\varepsilon})$ defined by
\begin{equation} \label{FP_kernel}
L_\varepsilon \psi (y) \coloneqq \int_\X K_\varepsilon (x,y) \psi(x) \dx \mu(x)
\end{equation}
is suited for our segmentation model \eqref{problem}.

\begin{proposition} \label{lem:prop_kern}
The kernel $K_\varepsilon\colon \X \times \Y \to \R$ defined in \eqref{eq:kernel_smoothed} is double-stochastic, i.e., $K_\varepsilon \in L_2(\X \times \Y, \mu \otimes \nu_\varepsilon)$ is non-negative and
\begin{align}
    \int\limits_\X K_{\varepsilon}(x,\cdot)  \dx \mu(x) = 1_\Y \,\,\,\nu_\varepsilon\text{-a.e.}, \quad
    \int\limits_\Y K_{\varepsilon}(\cdot,y)  \dx \nu_\varepsilon(y) = 1_\X \,\,\,\mu\text{-a.e.}
\end{align}
Further, $L_\varepsilon$ fulfills the properties \ref{enum:nu_marg} and \ref{enum:mu_marg}.
\end{proposition}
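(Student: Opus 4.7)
My plan is to verify the three assertions in sequence. Non-negativity is immediate from $\eta_{\varepsilon,\cdot}\ge 0$ and $\sigma_\varepsilon\ge 0$; note that $K_\varepsilon$ is only defined on $\{\sigma_\varepsilon>0\}$, whose complement has $\nu_\varepsilon$-measure zero because $\nu_\varepsilon=\sigma_\varepsilon\lambda$. For $L_2(\X\times\Y,\mu\otimes\nu_\varepsilon)$-membership I exploit the pointwise bound $k_\varepsilon\le C_\varepsilon$ noted directly after the definition: combined with $\int_\X k_\varepsilon(x,y)\dx\mu(x)=\sigma_\varepsilon(y)$, this yields $\int_\X k_\varepsilon(x,y)^2\dx\mu(x)\le C_\varepsilon\,\sigma_\varepsilon(y)$, and dividing by $\sigma_\varepsilon(y)^2$ and integrating against $\nu_\varepsilon$ gives the crude estimate $\|K_\varepsilon\|_{L_2(\mu\otimes\nu_\varepsilon)}^2\le C_\varepsilon\,\lambda(\Y)<\infty$.

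The $x$-marginal identity is immediate from the definition of $\sigma_\varepsilon$: wherever $\sigma_\varepsilon(y)>0$, one has $\int_\X K_\varepsilon(x,y)\dx\mu(x)=\sigma_\varepsilon(y)/\sigma_\varepsilon(y)=1$, and the exceptional set is $\nu_\varepsilon$-null. For the $y$-marginal I would cancel the denominator $\sigma_\varepsilon$ against the density of $\nu_\varepsilon$ to get
\[
\int_\Y K_\varepsilon(x,y)\dx\nu_\varepsilon(y)=\int_\Y k_\varepsilon(x,y)\dx y,
\]
then insert the definition of $k_\varepsilon$ and swap the $y$- and $z$-integrations via Fubini (applicable because the integrand is bounded and $\X,\Y$ are compact). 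The inner integral $\int_\Y \eta_{\varepsilon,T(z)}(y)\dx y$ equals $1$ since $\eta_{\varepsilon,T(z)}$ is a probability density with respect to Lebesgue measure on $\Y$, and the remaining integral $\int_\X \eta_{\varepsilon,x}(z)\dx z$ equals $1$ for the same reason. Hence the $y$-marginal equals $1$ for every $x\in\X$. As a byproduct, integrating this identity against $\mu$ and using Fubini once more yields $\nu_\varepsilon(\Y)=1$, so $\nu_\varepsilon\in\mathcal P(\Y)$.

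Finally, \ref{enum:nu_marg} for $L_\varepsilon$ is nothing but the $x$-marginal identity. For \ref{enum:mu_marg} it remains to establish $\|L_\varepsilon\|^2=1$. The upper bound $\|L_\varepsilon\|\le 1$ follows from Jensen's inequality applied to the probability measures $K_\varepsilon(\cdot,y)\,\mu$, whose total mass equals $1$ by the $x$-marginal: this yields $(L_\varepsilon f)(y)^2\le\int_\X K_\varepsilon(x,y)f(x)^2\dx\mu(x)$, and integrating against $\nu_\varepsilon$, applying Fubini, and using the $y$-marginal gives $\|L_\varepsilon f\|_{L_2(\Y,\nu_\varepsilon)}\le\|f\|_{L_2(\X,\mu)}$. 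Equality is attained at $f=1_\X$, since $L_\varepsilon 1_\X=1_\Y$ by the $x$-marginal and both $1_\X$ and $1_\Y$ have unit $L_2$-norm because $\mu$ and $\nu_\varepsilon$ are probability measures. I expect the main technical nuisance to be keeping track of which reference measure each integral is taken against and the careful disposal of the null set $\{\sigma_\varepsilon=0\}$; the computations themselves are elementary.
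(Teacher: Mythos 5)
Your proof is correct and follows essentially the same route as the paper's: the $x$-marginal is immediate from the definition of $\sigma_\varepsilon$, the $y$-marginal comes from cancelling $\sigma_\varepsilon$ against the Lebesgue density of $\nu_\varepsilon$ and applying Fubini (noting both $\eta_{\varepsilon,x}$ and $\eta_{\varepsilon,T(z)}$ are probability densities), and the $L_2$ bound uses the pointwise estimate $k_\varepsilon\le C_\varepsilon$ to reach the identical bound $C_\varepsilon\lambda(\Y)$. The one genuine difference is at the final step establishing $\|L_\varepsilon\|=1$: the paper simply cites \citet[Thm.~6.18]{Folland1984} (the Schur test), whereas you give a self-contained argument via Jensen's inequality for the upper bound $\|L_\varepsilon\|\le 1$ and then verify $\nu_\varepsilon(\Y)=1$ (by one more Fubini) to show that $f=1_\X$ attains equality. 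Your version is in fact a bit more careful than the paper's, since the Schur test by itself only yields $\|L_\varepsilon\|\le 1$; the equality case still needs the observation that $1_\X$ and $1_\Y$ have unit $L_2$-norm, i.e., that $\nu_\varepsilon$ is a probability measure, which you explicitly supply and the paper leaves implicit.
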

\begin{proof}
The first marginal property follows directly by definition of $K_\varepsilon$.
Further, the second one can be verified by
\begin{align*}
\int\limits_\Y K_{\varepsilon} (x,y)  \dx \nu_\varepsilon(y) 
= 
\int_\Y \int_\X \eta_{\varepsilon,x} (z) \eta_{\varepsilon,T(z)} (y) \dx z \dx y = 
\int_\X \eta_{\varepsilon,x} (z) \int_\Y \eta_{\varepsilon,T(z)} (y) \dx y \dx z 
= 
\int_\X \eta_{\varepsilon,x} (z)  \dx z = 1.
\end{align*}
Further, non-negativity of $K_\varepsilon$ follows directly by definition, and square-integrability can be checked as follows
\begin{align*}
\int_\X \int_\Y K_\varepsilon(x,y)^2 \dx \nu_\varepsilon(y) \dx \mu(x)
&=
\int_\X \int_\Y \frac{k_\varepsilon (x,y)^2}{\sigma_\varepsilon(y)^2} \sigma_\varepsilon(y) \dx y \dx \mu(x)=
\int_\X \int_\Y \frac{k_\varepsilon (x,y)^2}{\sigma_\varepsilon(y)} \dx y \dx \mu(x)\\
&= 
\int_\Y \frac{\int_\X k_\varepsilon (x,y)^2 \dx \mu(x)}{\int_\X k_\varepsilon (x,y) \dx \mu(x)} \, \dx y \le
\int_\Y C_\varepsilon \frac{\int_\X k_\varepsilon (x,y) \dx \mu(x)}{\int_\X k_\varepsilon (x,y) \dx \mu(x)}\, \dx y = C_\varepsilon \lambda(\Y).
\end{align*}
Finally, \cite[Thm.~6.18]{Folland1984} implies $\Vert L_\varepsilon \Vert = 1$ and hence $L_\varepsilon$ fulfills the properties \ref{enum:nu_marg} and \ref{enum:mu_marg}.
\end{proof}
\begin{remark}\label{rem:simplicity}
Simplicity of the largest eigenvalue of $L_\varepsilon^*L_\varepsilon$ can be shown in a similar way as in \citet[Prop.~3]{froyland2013analytic}, e.g., for $\X=\Y$ being a connected domain and $T$ being a diffeomorphism with Jacobi determinant uniformly bounded from above and below.
We omit the proof here, but remark that due to compactness, the multiplicity $q$ of $\lambda_1$ is finite. In case $q>1$, the solution to \eqref{problem} is $\smash{ \sigma_1=\lambda_1^{1/2} }$ and the maximizing functions are in the eigenspaces corresponding to $\lambda_1$.
\end{remark}

In case that $T$ is a transport map, we have the following convergence behavior of $K_\varepsilon$ 
and $\nu_\varepsilon$ as $\varepsilon \rightarrow 0$, stating that the system associated with $K_{\varepsilon}$ is a \emph{small random perturbation} of $T$, see also~\citet{Kha63} and \citet{Kif86}.

\begin{proposition}\label{prop:conv}
Let $\mu$ and $\nu$ be absolutely continuous measures w.r.t.~the Lebesgue measure with densities
$\sigma_\mu$ and $\sigma_\nu$, respectively. 
Assume that the densities are positive a.e.~and that $\sigma_\mu  \in L_\infty(\X)$ is continuous a.e.
Let $T\colon \X \to \Y$ be a measurable map with $\nu = T_{\#}\mu$
and set $\pi \coloneqq (\id_\X,T)_{\#}\mu$.
Then, the kernel $K_{\varepsilon}$ and the measure $\nu_\varepsilon$ defined with respect to $T$ satisfy
$K_{\varepsilon}\, (\mu \otimes \nu_\varepsilon) \rightharpoonup \pi$ and $\nu_\varepsilon \rightharpoonup \nu$ as $\varepsilon \to 0$.
\end{proposition}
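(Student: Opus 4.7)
The two statements can be handled together. The density of $K_\varepsilon(\mu\otimes\nu_\varepsilon)$ with respect to the Lebesgue measure on $\X\times\Y$ is $k_\varepsilon(x,y)\sigma_\mu(x)$, since the $\sigma_\varepsilon(y)$ factor cancels. Its second marginal is therefore $\nu_\varepsilon$, and since pushforward by the continuous projection $\X\times\Y\to\Y$ preserves weak convergence, it suffices to prove $K_\varepsilon(\mu\otimes\nu_\varepsilon)\weakly\pi$; the claim $\nu_\varepsilon\weakly\nu$ then follows automatically.

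Fix $F\in C(\X\times\Y)$. Substituting the definition of $k_\varepsilon$ and applying Fubini yields
\begin{equation}
\int F\dx\bigl(K_\varepsilon(\mu\otimes\nu_\varepsilon)\bigr) = \int_\X\sigma_\mu(x)\int_\X \eta_{\varepsilon,x}(z)\,G_\varepsilon(x,z)\dx z\dx x,
\end{equation}
where $G_\varepsilon(x,z)\coloneqq\int_\Y F(x,y)\eta_{\varepsilon,T(z)}(y)\dx y$ is the average of $F(x,\cdot)$ over $B_\varepsilon(T(z))$. Since $F$ is uniformly continuous on the compact set $\X\times\Y$, $G_\varepsilon(x,z)\to F(x,T(z))$ uniformly in $(x,z)\in\X\times\X$, and the remainder, integrated against the probability measure $\eta_{\varepsilon,x}(z)\sigma_\mu(x)\dx z\dx x$, is $o(1)$.

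The identity $1_{B_\varepsilon(x)}(z)=1_{B_\varepsilon(z)}(x)$ and one more Fubini turn the leading term into
\begin{equation}
\int_\X \int_{B_\varepsilon(z)} F(x,T(z))\frac{\sigma_\mu(x)}{\lambda(B_\varepsilon(x))}\dx x\dx z.
\end{equation}
Another use of uniform continuity of $F$ replaces $F(x,T(z))$ by $F(z,T(z))$ with a uniform $o(1)$ error, reducing matters to showing that $A_\varepsilon(z)\coloneqq \int_{B_\varepsilon(z)}\sigma_\mu(x)/\lambda(B_\varepsilon(x))\dx x \to \sigma_\mu(z)$ for $\lambda$-a.e.\ $z$. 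For $z$ in the interior of $\X$ and $\varepsilon$ small enough, every ball $B_\varepsilon(x)$ with $x\in B_\varepsilon(z)$ is a full Euclidean ball, so $\lambda(B_\varepsilon(x))=\lambda(B_\varepsilon(z))$, and $A_\varepsilon(z)$ is simply the Lebesgue average of $\sigma_\mu$ on $B_\varepsilon(z)$. By Lebesgue's differentiation theorem this tends to $\sigma_\mu(z)$ at every Lebesgue point, hence for $\lambda$-a.e.\ $z$ by the assumed essential boundedness and a.e.\ continuity of $\sigma_\mu$.

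The main obstacle is justifying the passage to the limit under the outer $\dx z$ integral. Here assumption~\eqref{eq:boundary} is essential: combined with $\|\sigma_\mu\|_\infty<\infty$ it yields the uniform bound $A_\varepsilon(z)\leq\|\sigma_\mu\|_\infty C_\X$, independent of $\varepsilon$ and $z$, which together with $\|F\|_\infty$ allows dominated convergence on the compact set $\X$. The limit equals $\int_\X F(z,T(z))\sigma_\mu(z)\dx z = \int_\X F\circ(\id_\X,T)\dx\mu = \int F\dx\pi$, completing the argument.
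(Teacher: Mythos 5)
Your proof is correct, and its backbone matches the paper's: recognize that $K_\varepsilon(\mu\otimes\nu_\varepsilon)$ has Lebesgue density $k_\varepsilon\sigma_\mu$, unfold $k_\varepsilon$ via Fubini into the two averaging operators, and pass to the limit using pointwise a.e.\ convergence of the density average together with dominated convergence under the uniform bound supplied by \eqref{eq:boundary}. The presentational differences are nevertheless worth recording. First, you test against $F\in C(\X\times\Y)$ and exploit uniform continuity on a compact set, whereas the paper tests against products $A\times B$ of continuity sets via the Portmanteau criterion; both are standard routes, but yours avoids the separate case analysis $z\in T^{-1}(\mathring B)$ versus $z\in T^{-1}(\Y\setminus\mathrm{cl}(B))$ and the invocation of non-singularity to show $\lambda(T^{-1}(\partial B))=0$. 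Second --- a genuine streamlining --- you note that $\nu_\varepsilon$ is the second marginal of $K_\varepsilon(\mu\otimes\nu_\varepsilon)$ and that the second marginal of $\pi$ is $T_\#\mu=\nu$, so since pushforward by the continuous projection preserves weak convergence, the claim $\nu_\varepsilon\rightharpoonup\nu$ is automatic once the first convergence is established; the paper instead reruns the whole computation with $A=\X$. Finally, your appeal to Lebesgue's differentiation theorem for $A_\varepsilon(z)\to\sigma_\mu(z)$ shows that the hypothesis that $\sigma_\mu$ be a.e.\ continuous is actually superfluous: $\sigma_\mu\in L_\infty(\X)$ alone suffices for both the pointwise limit (at Lebesgue points) and the dominating bound $A_\varepsilon(z)\le\|\sigma_\mu\|_{L_\infty}C_\X$.
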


\begin{proof}
	For the first claim, it suffices to show 
	$\lim_{\varepsilon \to 0} K_{\varepsilon}\, (\mu \otimes \nu_\varepsilon)(A \times B) = \pi(A \times B)$ 
	for all $A \in \mathcal B(\X)$ and $B \in \mathcal B(\Y)$ with $\mu(\partial A) = 0$ and $\nu(\partial B)=0$, 
	see~\citet[Thm.~2.8(i)]{Billingsley99}.
	Due to our assumptions on the densities, these sets also satisfy $\lambda(\partial A)=0$ and $\lambda(\partial B)=0$.
	Using the definition of $\nu_\varepsilon$ and Fubini's theorem, we get
	\begin{align*}
		K_{\varepsilon} (\mu \otimes \nu_\varepsilon) (A \times B) 
		&= 
		\int_A \int_B k_\varepsilon (x,y) \dx y \dx \mu(x)
		= \int_A \int_B \int_\X \eta_{\varepsilon,x} (z) \eta_{\varepsilon,T(z)} (y) \dx z \dx y \dx \mu(x)\\
		&= \int_\X \int_A \eta_{\varepsilon,x}(z) \dx \mu(x) \, \int_B \eta_{\varepsilon, T(z)}(y)
		\dx y\,  \dx z.
	\end{align*}
	Next, we define
	\begin{align}
		f_{\varepsilon,A}(z) &\coloneqq \int_A \eta_{\varepsilon,x}(z) \dx \mu(x) , \qquad
		g_{\varepsilon,B}(z) \coloneqq \int_B \eta_{\varepsilon,T(z)}(y) \dx y.
	\end{align}
	First, we show $f_{\varepsilon,A}(z) \to \sigma_\mu(z)1_A(z)$ for a.e.~$z \in \X$.
	Fix $z \in \mathring{\X} \setminus \partial A$.
	Then, it holds for $\varepsilon$ small enough that  
	$\eta_{\varepsilon, x}(z) = 1_{B_\varepsilon(z)}(x)/\lambda(B_\varepsilon(z))$.
	Thus, we get for every such $z$ where $\sigma_\mu$ is continuous,
	\[
	\lim_{\varepsilon \to 0} f_{\varepsilon,A}(z) 
	= \lim_{\varepsilon \to 0} \int_A \frac{1_{B_\varepsilon(z)}(x)}{\lambda(B_\varepsilon(z))} \sigma_\mu(x) \dx x 
	= \lim_{\varepsilon \to 0} \frac{1_A(z)}{\lambda(B_\varepsilon(z))} \int_{B_\varepsilon(z)} \sigma_\mu(x)\dx x = \sigma_\mu(z)1_A(z)
	\]
	and hence convergence a.e.
	
	Next, we show $g_{\varepsilon,B}(z) \to 1_{T^{-1}(B)}(z)$ a.e. 
	For $z \in T^{-1}(\mathring B)$ and $\varepsilon$ small enough, we conclude that $g_{\varepsilon,B}(z) = 1$.
	If $z \in T^{-1}(\Y \setminus \text{cl}(B))$, we get for $\varepsilon$ small enough that $g_{\varepsilon,B}(z) = 0$.
	As $T$ is non-singular, the set $T^{-1}(\partial B)$ has Lebesgue measure zero and the claim follows. 
	
	By definition, we realize that $g_{\varepsilon,B}(z) \leq 1$ and by \eqref{eq:boundary} that
	$f_{\varepsilon,A}(z) \leq \Vert \sigma_\mu\Vert_{L_\infty(\X)} C_\X$.
	Now, the first assertion of the theorem follows by applying Lebesgue's dominated convergence theorem
	\begin{align*}
		\lim_{\varepsilon \to 0} K_{\varepsilon}\, (\mu \otimes \nu_\varepsilon) (A \times B)
		&= \lim_{\varepsilon \to 0} \int_\X f_{\varepsilon,A}(z) g_{\varepsilon,B}(z) \dx z = \int_\X 1_A(z) 1_{T^{-1}(B)}(z) \dx \mu(z) = \pi(A \times B).
 	\end{align*}
 	
 	In order to show $\nu_\varepsilon \rightharpoonup \nu$, it suffices to prove $\lim_{\varepsilon \to 0} \nu_\varepsilon(B) \to \nu(B)$ for all $B \in \mathcal B(\Y)$ with  $\nu(\partial B)=0$.
 	Repeating the same calculations as in the first part of the proof with $A = \X$, we obtain
 	\[\lim_{\varepsilon \to 0} \nu_\varepsilon(B)
 	= \lim_{\varepsilon \to 0} \int_B \int_\X k_\varepsilon(x,y) \dx \mu(x) \dx y 
	= \int_\X 1_{T^{-1}(B)}(z) \dx \mu(z) 
	\stackrel{\eqref{eq:push_f}}{=} \int_\Y 1_B (y) \dx \nu(y)
	= \nu(B).\]
\end{proof}

By Proposition \ref{prop:conv} and Theorem \ref{thm:tmap} we could use the OT map $\hat T$ induced by $\hat \pi$ and create a smoothed kernel with respect to this map. 

\subsection{Kernels from Regularized OT}\label{sec:frobenius_perron_3}
Having the OT results in mind, we propose to construct kernels based on regularized OT plans 
$\hat \pi_\varepsilon$ in \eqref{eq:PDrelation}, respectively  $\hat \pi_{\varepsilon, \kappa}$ in \eqref{eq:ot-unbalanced}
with the properties~\ref{enum:nu_marg}--\ref{enum:cpt_simple} and to use them as kernels
for transfer operators $L_\varepsilon$ in \eqref{FP_kernel}.
A motivation from statistical physics for this approach is given in Section~\ref{sec:schroedinger}.
Focusing on $\hat \pi_\varepsilon$, we use 
\begin{equation} \label{sm_kern_ot}
K_\varepsilon (x,y)
\coloneqq 
\frac{\dx \hat \pi_\varepsilon(x,y)}{\dx(\mu \otimes \nu)(x,y)} 
= 
\exp\Bigl(\frac{\hat \varphi_\varepsilon(x) + \hat \psi_\varepsilon(y) - c(x,y)}{\varepsilon}\Bigr) .
\end{equation}
Indeed, we can use this kernel for our segmentation problem \eqref{problem}, as the following proposition shows.
\begin{proposition}
For a Lipschitz cost function $c$, the kernel $K_\varepsilon$ and the associated operator $L_\varepsilon$ fulfill \ref{enum:nu_marg}--\ref{enum:cpt_simple}.
\end{proposition}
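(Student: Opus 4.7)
The plan rests on three inputs: $\hat\pi_\varepsilon$ is by construction a coupling in $\Pi(\mu,\nu)$; the representation \eqref{sm_kern_ot} writes $K_\varepsilon$ as an exponential of continuous functions; and Proposition~\ref{prop:dual} guarantees that the optimal potentials exist in $C(\X)$ and $C(\Y)$. The first fact will give \ref{enum:nu_marg} at once; the third will imply that $K_\varepsilon$ is continuous and uniformly bounded away from $0$ and $\infty$ on the compact domain $\X\times\Y$, which will deliver compactness of $L_\varepsilon$ and drive the simplicity argument for \ref{enum:cpt_simple}.

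First I would obtain both marginal identities $\int_\X K_\varepsilon(x,\cdot)\dx\mu(x) = 1_\Y$ $\nu$-a.e.\ and $\int_\Y K_\varepsilon(\cdot,y)\dx\nu(y) = 1_\X$ $\mu$-a.e.\ directly from $\hat\pi_\varepsilon \in \Pi(\mu,\nu)$ by Fubini. The first is \ref{enum:nu_marg}. To upgrade the second to \ref{enum:mu_marg}, I still need $\|L_\varepsilon\|=1$. The upper bound $\|L_\varepsilon\|\le 1$ follows from a pointwise Cauchy--Schwarz estimate for $|L_\varepsilon f(y)|^2$ using the first marginal, followed by integration against $\nu$ using the second; the matching lower bound follows from the explicit identity $L_\varepsilon 1_\X = 1_\Y$, since $\mu$ and $\nu$ are probability measures so that $\|1_\X\|_{L_2(\X,\mu)} = \|1_\Y\|_{L_2(\Y,\nu)} = 1$.

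For \ref{enum:cpt_simple} the key observation is that the Lipschitz cost $c$ is bounded on $\X\times\Y$ and, by Proposition~\ref{prop:dual}, the continuous potentials are bounded on the compact sets; thus \eqref{sm_kern_ot} makes $K_\varepsilon$ continuous and bounded below by a strictly positive constant. In particular $K_\varepsilon \in L_2(\mu\otimes\nu)$, so $L_\varepsilon$ is Hilbert--Schmidt, hence compact, and $\|L_\varepsilon\|^2 = 1$ is attained as an eigenvalue of $L_\varepsilon^* L_\varepsilon$ with eigenvector $1_\X$. For any other eigenvector $f$ at this eigenvalue, $\|L_\varepsilon f\|_{L_2(\Y,\nu)} = \|f\|_{L_2(\X,\mu)}$ forces equality in the Cauchy--Schwarz bound used above for $\nu$-a.e.~$y$. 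Because $K_\varepsilon(\cdot,y) > 0$ on all of $\X$, the equality case of Cauchy--Schwarz pins $f$ to a constant $\mu$-a.e.\ (a brief Fubini argument rules out dependence on $y$), giving a one-dimensional eigenspace.

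The routine steps are the Fubini manipulations that convert measure-theoretic marginals into pointwise a.e.\ identities. The main obstacle is \ref{enum:cpt_simple}, and specifically it is the \emph{uniform strict} positivity of $K_\varepsilon$ that carries the argument. Without such a positive lower bound, the Cauchy--Schwarz equality case would only pin $f$ down on the support of $K_\varepsilon(\cdot,y)$ rather than on all of $\X$, and the simplicity conclusion could fail; this is where the Lipschitz assumption on $c$ enters substantively, in conjunction with the continuity of the potentials on the compact sets.
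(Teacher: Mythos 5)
Your proof is correct and follows essentially the same route as the paper's: the marginal identities come from $\hat\pi_\varepsilon$ being a coupling, the Lipschitz (hence bounded) Kantorovich potentials on the compact domain make $K_\varepsilon$ bounded and hence square-integrable so that $L_\varepsilon$ is Hilbert--Schmidt and compact, and strict positivity of $K_\varepsilon$ delivers simplicity of the leading eigenvalue. The only deviation is at the simplicity step: the paper simply cites Lemma~3 of \citet{froyland2013analytic} once $K_\varepsilon>0$ is established, whereas you supply the underlying argument yourself via the equality case of Cauchy--Schwarz, and you also make explicit the Schur-type estimate showing $\|L_\varepsilon\|=1$ that the paper leaves implicit in \ref{enum:mu_marg}. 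Your version is a bit more self-contained; the paper's is a bit shorter by deferring to the known Perron--Frobenius-type lemma.
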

\begin{proof}
Using the marginal property of $\hat \pi_\varepsilon(x,y) = K_\varepsilon(x,y)  \sigma_\mu(x) \sigma_\nu(y) \lambda \otimes \lambda$, we obtain by integration that $\int_\X K_\varepsilon (x,y) \sigma_\mu(x) \sigma_\nu(y) \dx x = \sigma_\nu(y)$ and hence
\begin{align*}
\int_\X K_\varepsilon (x,y) \dx \mu(x) = \int_\X K_\varepsilon (x,y) \sigma_\mu(x) \dx x =  1 \quad \nu\text{-a.e.},
\end{align*}
which is \ref{enum:nu_marg} and similarly we show \ref{enum:mu_marg}.
Further, it is well known, that for Lipschitz $c$ the Kantorovich potentials are Lipschitz and hence bounded on compact domains, see \citet[Prop.~5.7]{neumayer2020optimal}. Therefore, square-integrability and non-negativity follow directly from \eqref{eq:PDrelation}.
Finally, the assumptions of Lemma $3$ in \citet{froyland2013analytic} are fulfilled as $K_\varepsilon > 0$ and
consequently the largest eigenvalue of $\mathcal L_\varepsilon = L_\varepsilon^*L_\varepsilon$ is simple, i.e, \ref{enum:cpt_simple} holds.
\end{proof}

A counterpart of Proposition \ref{prop:conv} is given below, see~\citet{CDPS17}.
Recall Theorem \ref{thm:tmap} for the uniqueness of the optimal transport plan 
and its relation to the optimal transport map.

\begin{proposition}\label{prop:conv-reg}
Let $\mu,\nu\in \mathcal P (\X)$, where $\mu$ is absolutely continuous 
with respect to the Lebesgue measure and let $c(x,y)=h(x-y)$ with a strictly convex function $h$.
Then, it holds for the kernel $K_{\varepsilon}$ in \eqref{sm_kern_ot} that
$\hat \pi_\varepsilon = K_{\varepsilon}\, (\mu \otimes \nu) \rightharpoonup \hat \pi$, 
where $\hat \pi$ is the minimizer of the original OT problem \eqref{Monge_Kantorovich_problem}.
\end{proposition}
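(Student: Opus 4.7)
The plan is to combine weak compactness of $\Pi(\mu,\nu)$ with a passage to the limit in the defining minimality of $\hat\pi_\varepsilon$, and to identify the limit using the uniqueness statement of Theorem~\ref{thm:tmap}. First I would observe that $\Pi(\mu,\nu)$ is weakly compact: it is tight (its marginals are fixed and $\X,\Y$ are compact) and closed under weak convergence, since marginals pass to the limit against continuous test functions. Hence, along any sequence $\varepsilon_k\to 0$, a (not relabeled) subsequence satisfies $\hat\pi_{\varepsilon_k} \weakly \pi^\star$ for some $\pi^\star \in \Pi(\mu,\nu)$.

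Next I would use the minimality of $\hat\pi_\varepsilon$ in \eqref{sinkhorn}: for every competitor $\pi\in\Pi(\mu,\nu)$,
\[
\int_{\X\times\Y} c \dx \hat\pi_\varepsilon + \varepsilon\,\KL(\hat\pi_\varepsilon,\mu\otimes\nu) \;\le\; \int_{\X\times\Y} c \dx \pi + \varepsilon\,\KL(\pi,\mu\otimes\nu).
\]
Dropping the non-negative entropy on the left and using that $\pi\mapsto \int c \dx \pi$ is weakly continuous (as $c\in C(\X\times\Y)$ on the compact product), sending $\varepsilon_k\to 0$ gives $\int c \dx \pi^\star \le \int c \dx \pi$ for every admissible $\pi$ with $\KL(\pi,\mu\otimes\nu) < \infty$.

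The main obstacle is upgrading this to optimality against all $\pi\in\Pi(\mu,\nu)$, because the unique OT plan $\hat\pi$ from Theorem~\ref{thm:tmap} is concentrated on the graph of $\hat T$ and is typically singular with respect to $\mu\otimes\nu$, so has infinite $\KL$. I would resolve this via a standard block approximation: pick partitions $(A_i)$ of $\X$ and $(B_j)$ of $\Y$ into Borel cells of small diameter, and set
\[
\pi_n\big|_{A_i\times B_j} \coloneqq \frac{\hat\pi(A_i\times B_j)}{\mu(A_i)\,\nu(B_j)}\,(\mu|_{A_i})\otimes(\nu|_{B_j}),
\]
with the convention $0/0\coloneqq 0$. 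A direct marginal check gives $\pi_n \in \Pi(\mu,\nu)$, and since $\pi_n$ has a bounded density with respect to $\mu\otimes\nu$ on each cell, $\KL(\pi_n,\mu\otimes\nu) < \infty$. Refining the partition yields $\pi_n \weakly \hat\pi$ and, by uniform continuity of $c$, also $\int c \dx \pi_n \to \int c \dx \hat\pi$.

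Combining the two previous steps, $\int c \dx \pi^\star \le \lim_n \int c \dx \pi_n = \OT(\mu,\nu)$, so $\pi^\star$ is itself an optimal transport plan. The uniqueness statement of Theorem~\ref{thm:tmap} then forces $\pi^\star = \hat\pi$, and since every subsequence of $(\hat\pi_\varepsilon)_{\varepsilon > 0}$ admits a further subsequence converging weakly to this same limit, the full family converges weakly to $\hat\pi$, which is the claim.
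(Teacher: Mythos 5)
Your proof is correct. Note, however, that the paper itself does not prove Proposition~\ref{prop:conv-reg} — it merely cites Carlier, Duval, Peyr\'e and Schmitzer (2017) for the statement — so there is no in-paper argument to compare against. What you have written is essentially a self-contained reconstruction of the $\Gamma$-convergence proof from that reference: weak compactness of $\Pi(\mu,\nu)$ (metrizable, since $\X\times\Y$ is compact metric) gives a subsequential limit $\pi^\star\in\Pi(\mu,\nu)$; dropping the non-negative $\KL$ term on the left and passing $\varepsilon_k\to 0$ yields $\int c\,\dx\pi^\star\le\int c\,\dx\pi$ for all competitors of finite relative entropy; the block approximation $\pi_n$ has a piecewise-constant, bounded density with respect to $\mu\otimes\nu$ on a finite partition, hence finite $\KL$, and the marginal check and $\pi_n\weakly\hat\pi$ (via uniform continuity of test functions and $\pi_n(A_i\times B_j)=\hat\pi(A_i\times B_j)$) go through exactly as you describe; combined with the uniqueness in Theorem~\ref{thm:tmap} and the subsequence principle this gives the full claim. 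All steps are sound, including the convention $0/0\coloneqq 0$ handling null cells. In short: correct and faithful to the standard argument, and you have effectively supplied the proof that the paper only cited.
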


\section{Motivation for Regularized OT Kernels from Statistical Physics} \label{sec:schroedinger}
In this section, we motivate that under certain assumptions our construction of $L_\varepsilon$ from the regularized OT plan is a reasonable choice.
For these purposes, the measures $\mu$ and $\nu$ are modeled as empirical measures corresponding to a large ensemble of $n$ indistinguishable particles with time-dependent random positions $(X^i_t)_t$, $i=1,\ldots,n$, $t\in \{0, 1\}$.
Then, any transition kernel choice for the transfer operator corresponds to a (joint) distribution of the particle ensemble.
In the absence of any other given dynamical information, it is unclear which joint distribution to pick.
Although this is unlikely to represent the true dynamics when conditioning on the given measures $\mu$ and $\nu$, one might therefore ask what happens if one makes the ``blind'' generic choice that each individual particle is simply following an independent Brownian path.
We show that this assumption interestingly leads precisely to the regularized optimal transport plan $\hat \pi_\varepsilon$ for the particle ensembles as a whole as the number of particles $n$ approaches infinity.
Although only the time-discrete case $t\in \{0, 1\}$ is considered, we note that the argumentation can be generalized to Wiener processes or even arbitrary positive path measures and state spaces.
Here, we refer to the survey of \citet{leonard2013survey} on Schr\"odinger's question, see \eqref{sch}, and to \citet{leonard2010entropic} for rigorous proofs.
Our self-contained approach may be more accessible than those for the general setting.

Let the particle positions be described by i.i.d.~$\R^d \times \R^d$-valued random vectors $(X_0^i,X_1^i)$, $i=1,\ldots,n$,
on a common probability space with conditional probability density
\begin{equation}\label{joint_density}
\sigma_{X_1 | X_0}^\varepsilon(x_0, x_1) 
 = \tfrac{1}{\sqrt{\pi \varepsilon}^d}\exp \left(-\Vert x_0 - x_1\Vert^2 / \varepsilon \right),
\quad \varepsilon > 0.
\end{equation}
Since we assume that the particles are indistinguishable, 
the ensemble of particles at time $t \in \{0,1\}$ can be described using a $\mathcal P(\R^d)$-valued random variable, 
in other words, a random probability measure,
\begin{equation} \label{Lnt}
 Z_t^n \coloneqq \frac{1}{n} \sum_{i=1}^{n} \delta_{X_t^i}.
\end{equation}
Note that $\mathcal P(\R^d)$ shall again be equipped with the weak topology and we rely on the corresponding Borel $\sigma$-algebra.
The following proposition gives an intuition on the convergence behavior of $Z^n_1$ as $n\rightarrow \infty$
for such a process conditioned on starting points $X_0^i = x_0^i$ for which the corresponding empirical distribution converges weakly to $\mu$. 
Note that we re-use $Z^n_0$ with a slight abuse of notation.

\begin{proposition} \label{rem:nice}
For any sequence of sampled initial points $(x_0^i)_{i \in \N}$ with
\begin{equation} \label{eq:weak_1} 
Z_0^n = \frac{1}{n} \sum_{i=1}^{n} \delta_{x_0^i} \rightharpoonup \mu,
\end{equation}
the empirical random probability measure
\[\tilde Z^n_1 \coloneqq \frac1n \sum_{i=1}^n \delta_{X_1^i | X_0^i = x_0^i}\]
converges a.s.\ to the constant random measure $(\sigma^\varepsilon_{X_1 | X_0}(\cdot, 0) * \mu) \lambda$ as $n\to \infty$.
In other words,  for a.e.~realization of
the random variables $(X_1^i| X_0^i = x_0^i)_{i\in \N}$
the associated empirical measure converges weakly to $(\sigma^\varepsilon_{X_1 | X_0}(\cdot, 0) * \mu) \lambda$ as $n\to \infty$.
\end{proposition}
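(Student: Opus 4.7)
The plan is to reduce the claimed almost sure weak convergence of $\tilde Z^n_1$ to scalar convergence of $\int f \dx \tilde Z^n_1$ for every $f$ in a countable convergence-determining family of bounded continuous test functions, and for each such scalar convergence to split the integral into a deterministic ``bias'' handled by the hypothesis $Z_0^n \weakly \mu$ and a stochastic ``fluctuation'' that vanishes by a strong law of large numbers for independent bounded random variables.

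Fix $f \in C_b(\R^d)$ and set
\[
h_f(x) \coloneqq \int_{\R^d} f(y)\,\sigma_{X_1|X_0}^\varepsilon(x,y)\dx y.
\]
Smoothness of the Gaussian kernel and dominated convergence yield $h_f \in C_b(\R^d)$ with $\|h_f\|_\infty \le \|f\|_\infty$. Writing $Y_i$ for an independent random variable distributed according to $\sigma_{X_1|X_0}^\varepsilon(x_0^i,\cdot)\lambda$, so that $E[f(Y_i)] = h_f(x_0^i)$, I would decompose
\[
\int f \dx \tilde Z^n_1
= \underbrace{\frac{1}{n}\sum_{i=1}^n h_f(x_0^i)}_{A_n}
+ \underbrace{\frac{1}{n}\sum_{i=1}^n \bigl(f(Y_i) - h_f(x_0^i)\bigr)}_{M_n}.
\]
Since $A_n = \int h_f \dx Z_0^n$ and $h_f$ is continuous and bounded, the hypothesis $Z_0^n \weakly \mu$ gives $A_n \to \int h_f \dx \mu$, and by Fubini's theorem this limit equals $\int f(y)\bigl(\sigma^\varepsilon_{X_1|X_0}(\cdot,0)*\mu\bigr)(y)\dx y$, i.e.\ the integral of $f$ against the proposed limit measure. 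The summands in $M_n$ are independent, centered, and uniformly bounded by $2\|f\|_\infty$, so that $\sum_{i\ge 1} \Var(f(Y_i))/i^2 < \infty$ and Kolmogorov's strong law of large numbers yields $M_n \to 0$ almost surely.

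The main obstacle is to lift this ``per-$f$'' almost sure statement to a \emph{single} full-probability event on which $\int f \dx \tilde Z^n_1$ converges simultaneously for every admissible $f$. I would fix a countable subset $\{f_k\}_{k\in\N}\subset C_c(\R^d)$ dense with respect to the uniform norm (hence convergence-determining), apply the scalar argument above to each $f_k$, and intersect the resulting null sets. On the complementary full-probability event $\tilde Z^n_1$ converges vaguely to the target measure. To upgrade vague to weak convergence, one needs tightness of $(\tilde Z^n_1)_n$ almost surely; this follows from the uniform Gaussian tail bound $P(|Y_i| > R) \le C e^{-cR^2}$ (uniform in $i$ because tightness of $Z_0^n$ forces the $x_0^i$ to lie in a bounded set), combined with a Borel--Cantelli argument controlling the $\tilde Z^n_1$-mass of complements of large balls. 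Vague convergence together with tightness then give the claimed almost sure weak convergence to the deterministic limit $(\sigma^\varepsilon_{X_1|X_0}(\cdot,0)*\mu)\lambda$.
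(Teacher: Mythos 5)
Your proof takes essentially the same route as the paper: decompose $\int f\dx\tilde Z^n_1$ into a deterministic term driven by $Z^n_0\rightharpoonup\mu$ plus a centered fluctuation term that vanishes by Kolmogorov's strong law, then identify the deterministic limit with $\int f\,\dx\bigl((\sigma^\varepsilon_{X_1|X_0}(\cdot,0)*\mu)\lambda\bigr)$ via Fubini and weak continuity of convolution. You go one step further than the paper in explicitly addressing the ``single null set'' issue by restricting to a countable class of test functions, a subtlety the paper silently leaves to the reader.

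One detail in your extra step is off: the parenthetical claim that tightness of $(Z_0^n)_n$ forces the $x_0^i$ to lie in a common bounded set is false (tightness only controls the fraction of points outside a compactum, not the points themselves; e.g.\ $x_0^i=0$ for all $i$ except $x_0^{2^k}=k$ gives $Z_0^n\rightharpoonup\delta_0$ with unbounded support points). This does not sink your argument, though: once you have almost sure vague convergence of the probability measures $\tilde Z^n_1$ to a limit that you have already computed to be a \emph{probability} measure (the Gaussian density integrates to $1$ and $\mu$ has mass $1$), vague convergence automatically upgrades to weak convergence since no mass can escape to infinity, so neither the uniform Gaussian tail bound nor the Borel--Cantelli tightness argument is actually needed. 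Alternatively, replacing $C_c(\R^d)$ by a countable convergence-determining family of bounded Lipschitz functions (which exists on $\R^d$) gives weak convergence directly without any detour through vague convergence.
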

\begin{proof}
For any $f \in C_b(\mathbb R^d)$, 
the random variables $f(X_1^i| X_0^i = x_0^i)$, $i \in \mathbb N$, are independent with finite variances $V^i$ fulfilling 
$\sum_i V^i/i^2 < \infty$, so that we obtain by Kolmogorov's strong law of large numbers \citep[p.~389]{Shiryaev1996} that
\begin{align} 
\int_{\R^d} f(y) \dx \tilde Z^n_1(y) = &\frac{1}{n}\sum_{i=1}^n f(X_1^i|X_0^i = x_0^i) 
\xrightarrow{\text{a.s.}} 
\lim_{n \to \infty} \frac{1}{n}\sum_{i=1}^n E\bigr(f (X_1^i|X_0^i = x_0^i)\bigr).
\label{auch_noch}
\end{align}
By \eqref{joint_density}, we have
$$
\bigl(\sigma_{X_1 | X_0}^\varepsilon (\cdot,0) * Z_0^n\bigr)(y) 
= \frac{1}{n}\sum_{i=1}^n \sigma_{X_1 | X_0}^\varepsilon (y-x_0^i, 0)
= \frac{1}{n}\sum_{i=1}^n \sigma_{X_1 | X_0}^\varepsilon (x_0^i, y) ,
$$
so that by \eqref{eq:weak_1} and weak continuity of convolutions and $Z_0^n\rightharpoonup \mu$, the limit in \eqref{auch_noch} becomes
\begin{align}
&\lim_{n \to \infty} \frac{1}{n}\sum_{i=1}^n E\bigr(f (X_1^i|X_0^i = x_0^i)\bigr)
=
\lim_{n \to \infty} \int_{\R^d} f(y) \frac{1}{n}\sum_{i=1}^n \sigma_{X_1 | X_0}^\varepsilon (x^i_0, y) \dx y\\
&= \lim_{n \to \infty} \int_{\R^d} f(y) \bigl(\sigma_{X_1 | X_0}^\varepsilon (\cdot,0) * Z_0^n\bigr)(y) \dx y 
= \int_{\R^d} f(y) \bigl(\sigma_{X_1 | X_0}^\varepsilon(\cdot, 0) * \mu\bigr)(y) \dx y.
\end{align}
Thus, assuming \eqref{eq:weak_1}, we obtain the assertion.
\end{proof}

In contrast to the setting in the proposition, we  assume now that the initial configuration at $t=0$ lies 
in a small Wasserstein-2 ball 
$\Bmuxi \coloneqq \{\alpha\in \PRd : W_2(\alpha, \mu) \leq r \}$ with radius $r>0$ 
around $\mu$, and those of the observed particle distribution at $t=1$ 
in $\Bnuxi$.
The collective dynamical behavior of the ensemble is described by 
the $\mathcal{P}(\R^d \times \R^d)$-valued random variable
\begin{equation} \label{Ln} 
Z^n \coloneqq \frac{1}{n} \sum_{i=1}^{n} \delta_{(X_0^i, X_1^i)},
\end{equation}
which contains more information than the two random probability measures $Z^n_t$, $t=0,1$.
We are interested in estimating the conditional probability
\begin{align}\label{eq:cond_prob}
P\bigl(Z^n \in A \,\vert\, Z_0^n\in \Bmuxi, Z_1^n\in \Bnuxi\bigr) 
= \frac{P\bigl(Z^n \in A,  Z_0^n\in \Bmuxi, Z_1^n\in \Bnuxi\bigr)}{P\bigl(Z_0^n\in \Bmuxi, Z_1^n\in \Bnuxi\bigr)}
\end{align}
for large $n$ and all $A \in \mathcal{B}(\mathcal{P}(\R^d \times \R^d))$. For small $r$ and in the infinite particle limit, this is precisely
\emph{Schr\"odinger's question}, that can now be stated as follows: What is the limit
\begin{equation}\label{sch}
\lim_{r\to 0}\lim_{n\to \infty} P\bigl(Z^n \in A \,\vert\, Z_0^n\in \Bmuxi, Z_1^n\in \Bnuxi\bigr)?
\end{equation}
Interestingly, the limiting dynamical behavior of the particles is described approximately by the regularized optimal transport plan $\hat\pi_\varepsilon$; more precisely, the limit above and answer to Schr\"odinger's question is $\delta_{\hat\pi_\varepsilon}(A)$.
However, we have to be careful here. 
The statement holds for the \emph{entropy regularized OT distance}
\begin{align}\label{eq:ot_ent}
    \OT_{\varepsilon}(\mu,\nu) 
&\coloneqq  
\min_{\pi \in \Pi(\mu,\nu)} 
\,
\int_{\X \times \Y}  c\dx \pi
+ \varepsilon \mathrm{KL} (\pi,\lambda \otimes \lambda)\\
&= 
\varepsilon
\min_{\pi \in \Pi(\mu,\nu)} 
\, \mathrm{KL} \bigl(\pi, \exp(-c/\varepsilon) \lambda \otimes \lambda\bigr) - \varepsilon \int_{\X \times \Y}   \exp(-c/\varepsilon) -1 \dx  (\lambda \otimes \lambda).
\end{align}
The relation between the KL-regularized OT in~\eqref{sinkhorn} and the
entropy regularized OT in~\eqref{eq:ot_ent} is described in the following
remark. Roughly speaking, the KL-regularized OT is more general as minimizers always exist, 
but there are many cases where both minimization problems coincide.

\begin{remark}\label{rem:entropy}
Let $\sigma_\mu$ and $\sigma_\nu$ denote the densities of $\mu$ and $\nu$, respectively.
For $\pi \ll \lambda \otimes \lambda$ with density $\sigma_\pi$ the entropy is defined by
$E(\pi) \coloneqq \mathrm{KL}(\pi,\lambda \otimes \lambda)$.
Note that 
$ \pi \ll \mu \otimes \nu$ if and only if $\pi \ll \lambda \otimes \lambda$ for any $\pi \in \Pi(\mu,\nu)$.
If $\mathrm{KL} (\mu \otimes \nu, \lambda \otimes \lambda) < \infty$, we can show for any $\pi \ll \lambda \otimes \lambda$ 
with $\pi \in \Pi(\mu,\nu)$ that it holds
\begin{align}
	\mathrm{KL} (\pi,\lambda \otimes \lambda) - \mathrm{KL} (\mu \otimes \nu, \lambda \otimes \lambda) \label{care} =\mathrm{KL} (\pi,\mu \otimes \nu).
\end{align}
Consequently, in this case the minimizer for KL-regularized OT~\eqref{sinkhorn} and entropy regularized OT~\eqref{eq:ot_ent} coincide.
The crux is the condition $\mathrm{KL} (\mu \otimes \nu, \lambda \otimes \lambda) < \infty$, which is equivalent to $\mu,\nu$ having finite entropy,
i.e.,  $\sigma_\mu, \sigma_\nu$ are in a so-called Orlicz space $L \log L$, see~\citet{CLMW19,NR2013}.
For more information we refer to \citet{neumayer2020optimal}.
\end{remark}

The following proposition establishes a relation between regularized optimal transport plans minimizing \eqref{eq:ot_ent} and Schr\"odinger's question.
As already mentioned, this fact is known in a more general setting.
For the sake of completeness, we add the proof for our setting.

\begin{proposition}\label{prop:schroedinger}
Let $(X_0^i,X_1^i)_{i\in \N}$ be i.i.d.~$\R^d \times \R^d$-valued random vectors
on a common probability space with conditional density \eqref{joint_density}, and let $Z^n_t$, $t = 0,1$, and $Z^n$ be given by \eqref{Lnt} and \eqref{Ln}, respectively.
Then, it holds for $\mu,\nu \in \mathcal P(\mathbb R^d)$ fulfilling $\mathrm{KL} (\mu \otimes \nu, \lambda \otimes \lambda) < \infty$ and every $A \in \mathcal B(\mathcal P(\R^d \times \R^d))$ with $\hat \pi_\varepsilon \notin \partial A$ that
\begin{equation}\label{schroedinger}
\lim_{r\to 0} \lim_{n\to \infty} P\bigl(Z^n \in A \,\vert\, Z_0^n \in \Bmuxi, Z_1^n \in \Bnuxi \bigr) 
= 
\delta_{\hat \pi_\varepsilon}(A),
\end{equation}
where $\hat \pi_\varepsilon$ is the regularized optimal transport plan minimizing \eqref{eq:ot_ent} with the Wasserstein-2 cost function.
\end{proposition}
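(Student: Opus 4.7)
The strategy is to recognize the conditional probability as a Sanov-type large-deviation problem, apply the Gibbs conditioning principle, and then identify the resulting $\mathrm{KL}$-minimizer with $\hat\pi_\varepsilon$. I would first let $P_0\in\mathcal P(\R^d\times\R^d)$ denote the common law of $(X_0^i,X_1^i)$; its Lebesgue density equals $\sigma_0(x_0)\sigma^\varepsilon_{X_1|X_0}(x_0,x_1)$ for some marginal density $\sigma_0$ of $X_0^1$ whose precise choice will drop out.

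Applying Sanov's theorem to the i.i.d.~sequence $(X_0^i,X_1^i)$, the empirical measure $Z^n$ satisfies a large deviation principle on $\mathcal P(\R^d\times\R^d)$ (with the weak topology) at speed $n$ with rate function $\pi\mapsto \mathrm{KL}(\pi,P_0)$. The conditioning set
\[
B_r := \bigl\{\pi\in\mathcal P(\R^d\times\R^d) : \pi_0 \in \Bmuxi,\ \pi_1 \in \Bnuxi\bigr\}
\]
is weakly closed and convex, and under the hypothesis $\mathrm{KL}(\mu\otimes\nu,\lambda\otimes\lambda)<\infty$ its interior meets the effective domain of the rate function. The Gibbs conditioning principle (as in the L\'eonard references cited in the paper) then yields
\[
\lim_{n\to\infty}\mathcal L\bigl(Z^n \,\big|\, Z^n\in B_r\bigr) = \delta_{\pi^{(r)}},
\]
where $\pi^{(r)}$ is the unique (by strict convexity of $\mathrm{KL}$) minimizer of $\mathrm{KL}(\cdot,P_0)$ over $B_r$.

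To identify this minimizer, I would expand using the explicit Gaussian form of $\sigma^\varepsilon_{X_1|X_0}$ to get
\[
\mathrm{KL}(\pi,P_0) = \mathrm{KL}(\pi,\lambda\otimes\lambda) + \tfrac{1}{\varepsilon}\int\|x_0-x_1\|^2 \dx\pi(x_0,x_1) - \int \log\sigma_0 \dx\pi_0 + C_{\varepsilon,d},
\]
with a constant $C_{\varepsilon,d}$ depending only on $\varepsilon$ and $d$. When $\pi_0=\mu$ is fixed (as in the $r\to 0$ limit), the last two summands become constants. Hence minimizing $\mathrm{KL}(\cdot,P_0)$ over $\Pi(\mu,\nu)$ is, after multiplying by $\varepsilon$, equivalent to minimizing the entropy-regularized OT functional \eqref{eq:ot_ent} with the $W_2$-cost; its unique minimizer is $\hat\pi_\varepsilon$ by Remark~\ref{rem:entropy}.

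Finally, letting $r\to 0$, a standard weak lower-semicontinuity argument will give $\pi^{(r)}\rightharpoonup\hat\pi_\varepsilon$: any weak accumulation point has marginals $\mu$ and $\nu$, and its $\mathrm{KL}$ value is bounded above by $\mathrm{KL}(\hat\pi_\varepsilon,P_0)$, so uniqueness forces equality. Combining with the previous step and the Portmanteau theorem (using $\hat\pi_\varepsilon\notin\partial A$) will yield \eqref{schroedinger}. The hard part will be making the Sanov/Gibbs machinery rigorous on the noncompact space $\mathcal P(\R^d\times\R^d)$: one must verify that $\mathrm{KL}(\cdot,P_0)$ is a good rate function (using the Gaussian tails of $P_0$ for $W_2$-tightness of sublevel sets), check that the infimum of the rate function over $B_r$ matches that over its interior (so that the LDP upper and lower bounds coincide), and control $\pi^{(r)}$ uniformly as $r\to 0$.
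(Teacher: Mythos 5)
Your proposal is correct and takes essentially the same route as the paper: both apply Sanov's theorem to obtain a large-deviation principle for $Z^n$ with rate function given by relative entropy, concentrate the conditional law around the unique $\mathrm{KL}$-minimizer over the conditioning set, and then send $r\to 0$ to pin the marginals to $(\mu,\nu)$, where the minimizer is identified with $\hat\pi_\varepsilon$ via the decomposition that produces the entropy-regularized OT functional. The one organizational difference is that you invoke the Gibbs conditioning principle as a packaged corollary, whereas the paper works out the exponential rates of $P(Z^n\in A\cap C_r)$ and $P(Z^n\in C_r)$ separately and compares them (also first establishing $\hat\pi_{r,\varepsilon}\rightharpoonup\hat\pi_\varepsilon$ before the Sanov step, rather than after). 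You are also somewhat more careful on one point: Sanov's theorem wants the reference to be the actual joint law $P_0=\sigma_0\,\sigma^\varepsilon_{X_1\vert X_0}\,\lambda\otimes\lambda$ (a probability measure), and you note explicitly that the contribution of the unspecified initial marginal $\sigma_0$ only becomes an irrelevant constant after the marginal is fixed in the $r\to 0$ limit; the paper instead writes $\mathrm{KL}(\cdot,\sigma^\varepsilon_{X_1\vert X_0}\lambda\otimes\lambda)$ directly against a non-normalizable reference, which obscures this point.
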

\begin{proof}
For $r>0$, 
set $C_r \coloneqq \{\pi \in \mathcal P(\R^d\times \R^d):\pi_0 \in \Bmuxi, \pi_1 \in \Bnuxi\}$ 
and consider the unique minimizers $\hat \pi_{r,\varepsilon} \coloneqq 
\argmin_{\pi \in C_r} \mathrm{KL}(\pi ,\sigma^\varepsilon_{X_1 | X_0} \lambda \otimes \lambda)$, see after \eqref{KLdef}.
Clearly, we have by \eqref{sinkhorn_kernel} for any $r>0$ that
\begin{align} \label{soso}
\mathrm{KL}(\hat \pi_{r,\varepsilon} ,\sigma^\varepsilon_{X_1 | X_0}\lambda \otimes \lambda) 
= \min_{\pi \in C_r} \mathrm{KL}(\pi ,\sigma^\varepsilon_{X_1 | X_0}\lambda \otimes \lambda) \le 
\mathrm{KL}(\hat \pi_\varepsilon ,\sigma^\varepsilon_{X_1 | X_0}\lambda \otimes \lambda).
\end{align}
Choose a sequence $(r_j)_{j\in \N}$, $r_j>0$, with $r_j\to 0$ for $j\to \infty$.
Using the weak compactness of $\mathcal P(\R^d\times \R^d)$ and the weak closedness of any $C_r$,
we conclude that all accumulation points of 
$(\hat\pi_{r_j,\varepsilon})_j$ are contained in $C_{r_j}$ for any fixed $j\in \N$.
Consequently, they are also contained in $\Pi(\mu,\nu) = \cap_{j\in\N} C_{r_j}$.
Take a weak accumulation point $\tilde\pi_\varepsilon$ using weak compactness,
and with abuse of notation, choose a subsequence $(r_j)_{j\in \N}$ such that $\hat\pi_{r_j,\varepsilon} \rightharpoonup \tilde \pi_\varepsilon \in \Pi(\mu,\nu)$ as $r_j\to 0$.
By weak lower-semicontinuity of $\mathrm{KL}(\cdot,\sigma_{X_1 | X_0}^\varepsilon \lambda \otimes \lambda)$, 
we have
that
\[
\mathrm{KL}\bigl(\tilde \pi_\varepsilon ,\sigma^\varepsilon_{X_1 | X_0}\lambda \otimes \lambda \bigr) 
\le 
\lim_{j\to \infty}  \mathrm{KL}\bigl(\hat \pi_{r_j,\varepsilon} ,\sigma^\varepsilon_{X_1 | X_0}\lambda \otimes \lambda\bigr) 
\le \mathrm{KL} (\hat \pi_\varepsilon,\sigma^\varepsilon_{X_1 | X_0}\lambda \otimes \lambda \bigr).
\]
Since $\tilde \pi_\varepsilon$ is a feasible point of the regularized OT problem \eqref{sinkhorn},
this implies that the whole sequence
$(\hat \pi_{r_j,\varepsilon})_j$ converges weakly to $\tilde \pi_\varepsilon =  \hat \pi_\varepsilon$ as $j\to\infty$.
Due to \eqref{eq:cond_prob}, we have
\begin{align}
\lim_{n\to \infty} \frac{1}{n} \log P\bigl(Z^n \in A \,\vert\, Z_0^n\in \Bmuxi, Z_1^n\in \Bnuxi\bigr) = \lim_{n\to \infty} \frac{1}{n}\log \frac{P(Z^n \in A \cap C_r)}{P(Z^n\in C_r)}.
\end{align}
Using logarithm laws and Sanov's Theorem~\ref{thm:sanov}, see~\citet[Thm.~6.2.10]{DZ2010}, for the respective summands, we obtain for any measurable $A$ satisfying $A = \text{cl} (\mathring A)$ 
and $\hat \pi_\varepsilon \notin \text{cl}(A)$--which by convergence of $(\hat \pi_{r_j,\varepsilon})_j$ also implies $\hat \pi_{r,\varepsilon} \notin \text{cl}(A)$ for $r>0$ small enough--that
\begin{align*}
\lim_{n\to \infty} \frac{1}{n}\log \frac{P(Z^n \in A \cap C_r)}{P(Z^n\in C_r)}
= \inf_{\pi \in C_r} \mathrm{KL}\bigl(\pi ,\sigma_{X_1 | X_0}^\varepsilon\lambda \otimes \lambda\bigr) -\inf_{\pi \in A \cap C_r}\mathrm{KL}\bigl(\pi ,\sigma_{X_1 | X_0}^\varepsilon\lambda \otimes \lambda\bigr) <0,
\end{align*}
where the last inequality follows from the strict convexity of $\mathrm{KL}$.
Hence, we obtain
\[\lim_{r\to 0}\lim_{n\to \infty} P\bigl(Z^n \in A \,\vert\, Z_0^n\in \Bmuxi, Z_1^n\in \Bnuxi\bigr) = 0.\]
Using complements, we conclude for any measurable $A$ satisfying $A = \text{cl} (\mathring A)$ 
and $\hat \pi_\varepsilon \in \mathring A$, 
that
\[
\lim_{r\to 0}\lim_{n\to \infty} P\bigl(Z^n \in A \,\vert\, Z_0^n\in \Bmuxi, Z_1^n\in \Bnuxi\bigr) = 1,
\]
so that our claim follows from these two results using the monotonicity of measures.
\end{proof}

\begin{theorem}[Sanov]\label{thm:sanov}
Let $(X_i)_{i \in \N}$ be a sequence of i.i.d.~$\mathcal P(\X)$-valued random variables 
with distribution $m$ on some probability space, where $\mathcal P(\X)$ is equipped with the weak topology.
Then, it holds for any measurable $A \in \mathcal B(\mathcal P(\X))$ with $A = \text{cl}(\mathring A)$ that
\[\lim_{n \to \infty} \frac1n \log P\biggl( \frac1n\sum_{i=1}^n \delta_{X_i} \in A \biggr) = - \inf_{\pi \in A} \mathrm{KL}(\pi, m).\]
\end{theorem}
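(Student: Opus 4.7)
I would establish Sanov's theorem as a large deviations principle (LDP) for the empirical measures $L_n \coloneqq \frac{1}{n}\sum_{i=1}^n \delta_{X_i}$ with rate function $I(\pi) \coloneqq \mathrm{KL}(\pi,m)$, i.e., derive the two matching bounds
\[
\limsup_{n\to\infty} \tfrac{1}{n}\log P(L_n \in F) \le -\inf_{\pi \in F} I(\pi), \qquad
\liminf_{n\to\infty} \tfrac{1}{n}\log P(L_n \in G) \ge -\inf_{\pi \in G} I(\pi),
\]
for arbitrary closed $F$ and open $G$. Weak lower-semicontinuity of $I$ together with the regularity assumption $A = \text{cl}(\mathring A)$ forces $\inf_{\pi \in \text{cl}(A)} I = \inf_{\pi \in A} I = \inf_{\pi \in \mathring A} I$, so applying the closed-set bound to $F = \text{cl}(A)$ and the open-set bound to $G = \mathring A$ pinches $\frac{1}{n}\log P(L_n \in A)$ to the claimed limit $-\inf_{\pi\in A}\mathrm{KL}(\pi,m)$.

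For the upper bound, I would apply exponential Chebyshev with test functions $\varphi \in C(\X)$. Using i.i.d.\ of the $X_i$,
\[
P(L_n \in A) \le \exp\Bigl(-n \inf_{\pi \in A} \int \varphi \dx \pi\Bigr) \Bigl(\int e^\varphi \dx m\Bigr)^n,
\]
which rearranges to $\tfrac{1}{n}\log P(L_n \in A) \le -\inf_{\pi \in A}\bigl(\int \varphi \dx \pi - \log \int e^\varphi \dx m\bigr)$. Optimising over $\varphi$ and invoking the Donsker--Varadhan variational representation $\mathrm{KL}(\pi,m) = \sup_{\varphi \in C(\X)} \bigl(\int \varphi \dx \pi - \log \int e^\varphi \dx m\bigr)$, together with a minimax interchange justified by exponential tightness (automatic since $\X$ is compact) and by weak lower-semicontinuity of $I$, yields the closed-set upper bound.

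For the lower bound, fix $\pi \in \mathring A$ with $\mathrm{KL}(\pi,m) < \infty$ (the bound is vacuous otherwise); in particular $\pi \ll m$ with Radon--Nikodym density $f$. A change of measure from $m^{\otimes n}$ to $\pi^{\otimes n}$ yields
\[
P(L_n \in \mathring A) = E_{\pi^{\otimes n}}\Bigl[\mathbf{1}_{L_n \in \mathring A}\, \exp\Bigl(-n \int \log f \dx L_n\Bigr)\Bigr].
\]
Under $\pi^{\otimes \N}$ the strong law gives $L_n \weakly \pi$ a.s.\ and $\int \log f \dx L_n \to \mathrm{KL}(\pi,m)$ a.s., so restricting the expectation to $\{L_n \in \mathring A\} \cap \bigl\{\lvert \int \log f \dx L_n - \mathrm{KL}(\pi,m) \rvert < \delta \bigr\}$, whose $\pi$-probability tends to $1$, produces $\tfrac{1}{n}\log P(L_n \in \mathring A) \ge -\mathrm{KL}(\pi,m) - \delta + o(1)$; sending $\delta \to 0$ and then infimising over $\pi \in \mathring A$ completes the argument. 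The main technical hurdle is the minimax exchange in the upper bound, which requires $I$ to have weakly compact sublevel sets --- here free from compactness of $\X$; a secondary subtlety is integrability of the possibly unbounded $\log f$ in the tilting step, which is guaranteed by $\mathrm{KL}(\pi,m) < \infty$. See \citet[Thm.~6.2.10]{DZ2010} for the complete argument.
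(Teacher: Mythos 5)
The paper does not give its own proof of this statement; it cites \citet[Thm.~6.2.10]{DZ2010} and uses it as a black box in the proof of Proposition~\ref{prop:schroedinger}. Your sketch fills this in with the standard large-deviations route (Chernoff-type upper bound via exponential test functions, lower bound via change of measure to $\pi^{\otimes n}$), and both halves are correct in spirit. Minor quibbles: the minimax interchange in the upper bound is not how the argument is usually closed --- the standard proof passes through local Chernoff bounds on small weak neighborhoods plus a finite cover of the weakly compact $\mathcal{P}(\X)$, rather than an abstract minimax theorem --- and the change-of-measure identity should be an inequality $\geq$ unless $m\ll\pi$ (which is fine, since you only need a lower bound). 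The integrability of $\log f$ under $\pi$ that you flag is indeed automatic: $f\log f\geq -1/e$ forces $\int |f\log f|\,\mathrm{d}m<\infty$ whenever $\mathrm{KL}(\pi,m)<\infty$.

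The genuine gap is in the pinching step. You assert that weak lower semicontinuity of $I=\mathrm{KL}(\cdot,m)$ together with $A=\text{cl}(\mathring A)$ forces $\inf_{A} I = \inf_{\mathring A} I$. This does not follow: for $\pi_k\in\mathring A$ with $\pi_k\weakly\pi\in A$, lower semicontinuity gives $I(\pi)\leq\liminf_k I(\pi_k)$, which only reproduces the trivial inequality $\inf_A I\leq\inf_{\mathring A} I$; the reverse inequality $\inf_{\mathring A} I\leq\inf_A I$ would require upper semicontinuity, which $\mathrm{KL}$ does not have. In fact, $A=\text{cl}(\mathring A)$ alone is \emph{not} sufficient for the limit form: take $\X=\{0,1,2\}$, $m=\tfrac12\delta_0+\tfrac12\delta_1$, and let $A$ be a closed non-degenerate triangle in the $2$-simplex $\mathcal P(\X)$ meeting the edge $\{\pi(\{2\})=0\}$ only at $\pi^*=\tfrac12\delta_0+\tfrac12\delta_1$. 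Then $A=\text{cl}(\mathring A)$, $\inf_A I = I(\pi^*)=0$, but $\inf_{\mathring A} I=+\infty$, and $P(L_n\in A)$ equals $\binom{n}{n/2}2^{-n}$ for even $n$ and $0$ for odd $n$, so $\tfrac1n\log P(L_n\in A)$ oscillates between $\to 0$ and $-\infty$ --- the limit does not exist. What the limit form of Sanov actually needs is the \emph{$I$-continuity} hypothesis $\inf_{\mathring A}I=\inf_{\text{cl}(A)}I$ (cf.\ \citet[Sec.~1.2]{DZ2010}); the paper implicitly supplies this in the proof of Proposition~\ref{prop:schroedinger} by assuming $\hat\pi_\varepsilon\notin\text{cl}(A)$ or $\hat\pi_\varepsilon\in\mathring A$, so the application is fine, but the Theorem as stated --- and your attempted justification of the pinching --- both omit this hypothesis.
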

To summarize, in the case that the particles make independent Gaussian jumps,
conditioning on initial and end configurations close to $\mu$ and $\nu$ means 
having a dynamical behavior of the particles described approximately by the regularized optimal transport plan~$\hat \pi_\varepsilon$.

\section{Discretization}\label{sec:discrete}
In this section, we discuss the discrete settings for our numerical computations.
In Subsection~\ref{subsec:STK}, we consider the construction of smoothed transition kernels from OT plans and from entropic OT plans and illustrate their behaviour by a numerical example.
Since the second approach is much more efficient, we will choose it in the  numerical part.
Further, we provide the corresponding segmentation algorithms in  Subsection~\ref{subsec:entropic}.

\subsection{Discrete Kernels}\label{subsec:STK}
In view of numerical applications, we focus now on discrete OT for the kernel construction.
Let $(x_i)_{i \in I}$ with $I = \{i=1,\ldots,m\}\subset \Z$ and $(y_j)_{j \in J}$ with $J = \{j=1,\ldots,n\}\subset \Z$ be the support points of 
$\mu = \sum_{i \in I} \mu_i \delta_{x_i}$ and $\nu = \sum_{j \in J} \nu_j \delta_{y_j}$, respectively.
Then, the OT plan is given by
$$
\hat \pi = \argmin_{\pi \in \Pi(\mu,\nu)} \sum_{i \in I} \sum_{j \in J} c(x_i,y_j) \pi(i,j),
$$
and similarly for the regularized OT plans.
Here, $\hat \pi$ can be interpreted as a mapping $\hat \pi\colon I \times J \to \R_{\ge 0}$. 
For convenience, 
we extend all vectors and mappings to the whole integers by setting them to zero outside of the index sets $I$ and $J$.

\paragraph{Smoothed Transition Kernels from OT}
The construction of smoothed transition kernels as proposed in Subsection~\ref{sec:frobenius_perron_2} 
relies on the transport map $\hat T$ associated to the OT plan~$\hat \pi$. 
Unfortunately, for the discrete transport problem,
this OT map does not necessarily exist.
Therefore, we replace the smoothed transition kernels described in Subsection~\ref{sec:frobenius_perron_2} 
by a construction that uses transport plans instead of transport maps.
Let $\sigma_{\varepsilon}\colon \Z \to \R$ denote some positive, normalized smoothing kernel 
centered around $i=0$ with finite width. For $i \in I$, we smooth $\hat \pi$ in $j$-direction to get
\[
K^1_{\hat \pi,\varepsilon}(i,j) \coloneqq \biggl(\frac{\hat \pi(i,\cdot)}{\mu_i\sum_{k \in J} \sigma_\varepsilon(k-\cdot)} \ast \sigma_\varepsilon\biggr) (j), \quad j \in J 
\]
and set $K^1_{\hat\pi,\varepsilon}(i,j) \coloneqq 0$ for $j \not \in J$ or $i \not \in I$.
The rescaling of $\hat \pi$ ensures that mass is preserved, i.e., 
\begin{align}
\sum_{j \in J} K^1_{\hat \pi,\varepsilon}(i,j) 
&= \sum_{j \in J} \sum_{r \in \mathbb Z} \frac{\hat \pi(i,r)}{\mu_i\sum_{k \in J} \sigma_\varepsilon(k-r)}  \sigma_\varepsilon(j-r) = \sum_{r \in \mathbb Z} \frac{\hat \pi(i,r)}{\mu_i\sum_{k \in J} \sigma_\varepsilon(k-r)} \sum_{j \in J}\sigma_\varepsilon(j-r) = 1.
\end{align}
Next, we smooth in $i$-direction as
\[
K^2_{\hat \pi,\varepsilon}(i,j) \coloneqq \frac{K^1_{\hat \pi,\varepsilon}(\cdot,j) \ast \sigma_\varepsilon (i)}{\sum_{k \in I}\sigma_\varepsilon(i-k)}
\]
and set again $K^2_{\hat \pi,\varepsilon}(i,j)\coloneqq 0$ for $j \not \in J$ or $i \not \in I$. Here, the denominator ensures that the mass is only distributed between indices $i\in I$ 
and consequently, for any $i \in I$,
\begin{align*}
\sum_{j \in J} K^2_{\hat\pi,\varepsilon}(i,j) 
=  
\sum_{j \in J} \frac{\sum_{r \in \mathbb Z} K^1_{\hat \pi,\varepsilon}(r,j) \sigma_\varepsilon(i-r)}{\sum_{k \in I}\sigma_\varepsilon(i-k)} = \frac{\sum_{r \in I} \sigma_\varepsilon(i-r) }{\sum_{k \in I}\sigma_\varepsilon(i-k)} \sum_{j \in J}  K^1_{\hat \pi,\varepsilon}(r,j)
= 1.
\end{align*}
Then, the final kernel is defined by
\[K_{\hat \pi,\varepsilon}(i,j) \coloneqq \frac{K^2_{\hat \pi,\varepsilon}(i,j)}{\sum_{i \in I} K^2_{\hat \pi,\varepsilon}(i,j) \mu_i}.\]
It is straightforward to check that this kernel fulfills \ref{enum:nu_marg}, 
and also \ref{enum:mu_marg}, with the smoothed marginal measure $\nu^\varepsilon_j = \sum_{i \in I} K^2_{\hat\pi,\varepsilon}(i,j) \mu_i$, i.e., $\sum_{i \in I} K_{\hat\pi,\varepsilon}(i,j) \mu_i = 1$ and $\sum_{j \in J} K_{\hat\pi,\varepsilon}(i,j) \nu^\varepsilon_j = 1$.
Property~\ref{enum:cpt_simple}, that is, simplicity of the largest singular value of $L_\varepsilon$ defined by \eqref{FP_kernel}, might fail in cases where $\hat\pi$ has a ``block-diagonal structure'' and the width of $\sigma_\varepsilon$ is small enough for the blurred kernel $K_{\hat \pi,\varepsilon}$ to retain this structure. In this case, either $\varepsilon$ needs to be increased, or one obtains ``perfectly'' coherent sets corresponding to the largest singular value, see Remark~\ref{rem:simplicity}.

\paragraph{Kernels from Regularized OT}
Having computed the OT plan $\hat \pi_\varepsilon > 0$  of the discrete regularized OT,
the corresponding kernel for our transfer operator is given by 
\begin{equation} \label{sm_kern_ot_d}
K_\varepsilon (i,j)
\coloneqq 
\frac{\hat \pi_\varepsilon(i,j)}{\mu(i)\nu(j)} , \quad i \in I, j \in J.
\end{equation}
For plans $\hat \pi_{\varepsilon,\kappa}$ arising from
unbalanced OT we have to choose
\begin{equation} \label{unbal_kern_ot_d}
K_\varepsilon (i,j)
\coloneqq 
\frac{\hat \pi_{\varepsilon,\kappa} (i,j)}{\tilde \mu(i) \tilde \nu(j)} , \quad i \in I, j \in J,
\end{equation}
where 
$\tilde \mu = \sum_j \hat \pi_{\varepsilon,\kappa} (\cdot,j)$ 
and 
$\tilde \nu = \sum_i \hat \pi_{\varepsilon,\kappa} (i,\cdot)$
are the marginals of $\hat \pi_{\varepsilon,\kappa}$.
Then, the properties \ref{enum:nu_marg}--\ref{enum:cpt_simple} are again ensured with respect to $\tilde \mu$ and $\tilde \nu$.

\paragraph{Numerical Comparison}
The proposed methods for creating kernels are compared numerically for the cost $c(x_i,y_j) = (x_i-y_j)^2$.
\renewcommand\curfolder{img/kernel_comparison}
\begin{figure}[p!]
	\centering
	\includegraphics[width=0.9\textwidth]{\curfolder/kernel_comparison_data.pdf}
	\caption{Probability distributions $\mu$ and $\nu$ for comparing different transition kernel constructions.}
	\label{fig:kernel_comparison_data}
	\vspace{.5cm}
	\centering
	\includegraphics[clip, trim=1.5cm 1.5cm .8cm 1.5cm, width=0.9\textwidth]{\curfolder/kernel_comparison.pdf}
	\caption{Comparison of kernels from regularized OT (left) and smoothed kernels from OT with a Gaussian blur (middle) and a ball-averaging one (right). The vertical direction belongs to the first and the horizontal direction to the second component of the kernels under comparison.
	The blur width $w = \sqrt{\varepsilon / 2}$ is adapted to the regularization parameter $\varepsilon$ of the regularized OT, such that all kernels have equal bandwidth.
	}
	\label{fig:kernel_comparison}
\end{figure}
For this purpose, we fix the two probability densities $\mu$ and $\nu$ on $[0,1]$ 
displayed in Fig.~\ref{fig:kernel_comparison_data}, 
which are sums of Gaussians.
The kernel proposed in \eqref{sm_kern_ot_d} for different standard deviations $w = \sqrt{\varepsilon/2}$ is shown in the left column of Fig.~\ref{fig:kernel_comparison}.
To get a clue about the smoothed kernel, we blur the OT plan $\hat \pi$ using two different  functions, namely a Gaussian kernel $\sigma_w^1\colon \Z \to \R$ 
and an averaging blur $\sigma_w^2\colon \Z \to \R$ within a ball
of width $\lfloor w \rfloor$, both centered around $0$.
To ensure that $\sigma_w^1$ has finite width, we set the values below $10^{-4}$ to zero.
Both discrete functions are normalized, such that they sum up to one.
The resulting smoothed kernels are depicted in the middle and right columns in Fig.~\ref{fig:kernel_comparison}.
They look similar to the left ones, however, the averaging kernel appears to be artificially rough for large $w$.
As a consequence, it seems natural to use regularized OT plans instead of smoothed OT plans.
Note that the cost for computing $\hat \pi_\varepsilon$ with the Sinkhorn algorithm scales with $1/\varepsilon$, 
i.e., $\varepsilon$ should not be to small, see~\citet{C2013}. This is not a real issue, since we need a certain amount of blur anyways in order to ensure that the leading singular value 
in the corresponding transfer operator $L_\varepsilon$ is simple.

\subsection{Segmentation Algorithms}\label{subsec:entropic} 
In our numerical examples in the next section, we apply the kernel \eqref{sm_kern_ot_d}. However, we will see that for the addressed applications
the unbalanced OT plan $\hat \pi_{\varepsilon,\kappa}$ with corresponding kernel \eqref{unbal_kern_ot_d} leads to more natural
results.
In the following, we use the matrix-vector notation
$$
\mu \coloneqq \left( \mu(i) \right)_{i=1}^m, \; \Sigma_\mu \coloneqq  \mathrm{diag} \, \mu,\;
\nu \coloneqq \left( \nu(j) \right)_{j=1}^n, \Sigma_\nu \coloneqq  \mathrm{diag} \, \nu,
$$
and $\|f\|_\mu \coloneqq \|\Sigma_\mu^{\scriptscriptstyle 1/2} f\|_2$, $\|g\|_\nu \coloneqq \|\Sigma_\nu^{\scriptscriptstyle 1/2} g\|_2$.
Further, let
$K_\varepsilon \coloneqq \left( K_\varepsilon(i,j) \right)_{i,j=1}^{m,n}$, 
$\hat \pi_\varepsilon \coloneqq \left( \hat \pi_\varepsilon(i,j) \right)_{i,j=1}^{m,n}$,
and similarly for the unbalanced kernels.
Then \eqref{sm_kern_ot_d} becomes 
$K_\varepsilon = \Sigma_\mu^{\scriptscriptstyle -1} \hat \pi_\varepsilon \Sigma_\nu^{\scriptscriptstyle -1}$.
To solve the segmentation model~\eqref{problem}, 
we have to find the second largest singular values of  the discrete transfer operator
$L_\varepsilon \colon (\mathbb R^m, \|\cdot\|_\mu) \to (\mathbb R^n, \|\cdot\|_\nu)$
given by
\[
L_\varepsilon f =  K_\varepsilon^\tT \Sigma_\mu f = \Sigma_\nu^{-1} \hat \pi_\varepsilon^\tT f.
\]
Then, the Rayleigh quotient in \eqref{problem} can be rewritten as
\begin{align}\label{svd}
\frac{ \langle L_\varepsilon f, g \rangle_\nu}{ \Vert f\Vert_{\mu} \Vert g \Vert_{\nu}}
&= 
\frac{ f^\tT  \Sigma_\mu K_\varepsilon \Sigma_\nu g }{\|\Sigma_\mu^{\scriptscriptstyle 1/2} f\|_2 \, \|\Sigma_\nu^{\scriptscriptstyle 1/2} g\|_2}
=
\frac{ f^\tT  \hat \pi_\varepsilon g }{\|\Sigma_\mu^{\scriptscriptstyle 1/2} f\|_2 \, \|\Sigma_\nu^{\scriptscriptstyle 1/2} g\|_2}
=
\frac{ u^\tT \sighalf_\mu \hat \pi_\varepsilon \sighalf_\nu v }{\|u\|_2 \, \|v\|_2},
\end{align}
where we substituted $u \coloneqq \Sigma_\mu^{\scriptscriptstyle 1/2} f$ and $v \coloneqq \Sigma_\nu^{\scriptscriptstyle 1/2} g$.
The constraint in \eqref{problem}  becomes 
$$0 = \langle f, 1_\X\rangle_\mu = f^\tT \Sigma_\mu 1_\X = u^\tT \Sigma_\mu^{\scriptscriptstyle 1/2} 1_\X = \langle u, \Sigma_\mu^{\scriptscriptstyle 1/2} 1_\X \rangle$$
and indeed $\Sigma_\mu^{\scriptscriptstyle 1/2} 1_\X$ is the left singular vector to the largest singular value of $\sighalf_\mu \hat \pi_\varepsilon \sighalf_\nu$.
This  holds also accordingly for the dominant right singular vector.
To compute the singular vectors belonging to the largest singular values of this matrix, we apply a (truncated) singular value decomposition (SVD).
We summarize our steps in Algorithm~\ref{alg:r-ot}.
Readers familiar with \citet{FrSaMo10} will notice the evident analogies between our algorithm and \citet[Lem.~1]{FrSaMo10} in computing the segmentation (coherent sets).
The transition matrix $P$ therein connects to the objects used by us via~$\Sigma_{\nu}^{-1}P^{\mathrm{T}} = K_{\varepsilon}^{\mathrm{T}}$.
If the marginal requirement on $\mu$ and $\nu$ is not fulfilled, unbalanced OT may be preferable.
Similarly as above, the same considerations for unbalanced regularized OT lead to  Algorithm~\ref{alg:u-r-ot}.
\begin{algorithm}[!tb]
\begin{algorithmic}
\State Input: Particle distributions $\mu$ and $\nu$ with $\mu^\tT 1_m = \nu^\tT 1_n$,
              $\varepsilon >0$.

\State 1. Compute the regularized OT plan $\hat{\pi}_\varepsilon$ using Sinkhorn's algorithm.

\State 2. Compute the first non-trivial singular vectors $u_2, v_2$ of  $\sighalf_\mu \pieps \sighalf_\nu$ by truncated SVD.

\State 3. Set \smash{$\hat{f} \coloneqq \sighalf_\mu u_2$} and \smash{$\hat{g} \coloneqq  \sighalf_\nu v_2$}.
\caption{Segmentation based on regularized OT}
\label{alg:r-ot}
\end{algorithmic}
\end{algorithm}

\begin{algorithm}[!tb]
\caption{Segmentation based on unbalanced regularized OT}
\label{alg:u-r-ot}
\begin{algorithmic}
\State Input: Particle distributions $\mu$ and $\nu$, $\varepsilon >0$, $\kappa > 0$.

\State 1. Compute the unbalanced regularized OT plan $\hat{\pi}_{\varepsilon, \kappa}$ using Sinkhorn's algorithm.\\
          \hspace*{0.3cm} Set $\tilde \mu \coloneqq \hat{\pi}_{\varepsilon, \kappa} 1_n$, $\tilde \nu \coloneqq \hat{\pi}_{\varepsilon, \kappa}^\tT 1_m$.

\State 2. Compute the first non-trivial singular vectors $u_2, v_2$ of  
\smash{$\sighalf_{\tilde \mu} \hat{\pi}_{\varepsilon, \kappa} \sighalf_{\tilde \nu}$} by truncated SVD.

\State 3. Set \smash{$\hat{f} \coloneqq \sighalf_{\tilde \mu} u_2$} and \smash{$\hat{g} \coloneqq \sighalf_{\tilde \nu} v_2$}.
\end{algorithmic}
\end{algorithm}

\begin{remark}[Multiphase Segmentation] \label{rem:multiphase}
In our numerical examples, we are also interested in partitions with more than
just two sets.
These can be obtained by using additional singular functions as it is also known in the extensive literature on graph cut partitions, see, e.g., \citet{Luxburg2007}.
To this end, we employ the maximizers of the problem
\begin{align} \label{third_eigenvector}
 \max_{
\substack{
(f,g) \in L_2(\X,\mu) \times L_2(\Y,\nu)
} }
\biggl\{ 
\frac{ \langle L f, g \rangle_\nu}{ \Vert f\Vert_{L_2(\X,\mu)} \Vert g \Vert_{L_2(\Y,\nu)} }: 
\langle f, 1_\X \rangle_\mu = \langle f, f_2 \rangle_\mu = \langle g, 1_\Y \rangle_\nu = \langle g, g_2\rangle_\nu = 0 
\biggr\},
\end{align}
where $f_2, g_2$ are the solutions of \eqref{problem}, i.e., the second largest singular vector pair.
It is not hard to show that the solutions of \eqref{third_eigenvector} are given by the singular functions corresponding to the third largest singular value of $L$. Similarly, we can consider
further singular function pairs.
Based on these singular functions, we compute a multiphase partition using the $c$-means (or fuzzy $c$-means) algorithm.
\end{remark}
\section{Numerical Results} \label{sec:numerics}
In this section, we present various examples.
All involved transport plans are computed with respect to the squared Euclidean distance as cost function.
\subsection{Exploring Coherent Set Detection}\label{subsec:precipitation}
\renewcommand\curfolder{img/data_1}
\renewcommand\curwidth{0.235\textwidth}
\begin{figure}[tp]
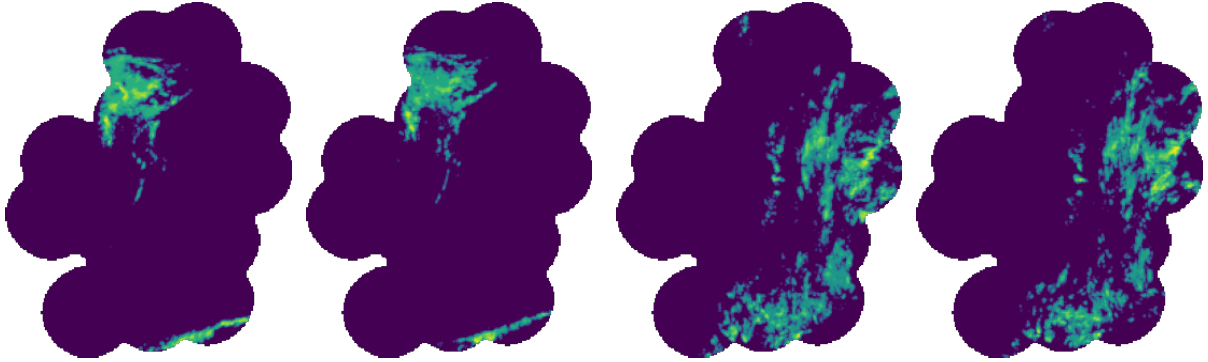

	\centering
	\subfloat[15 Apr 2018, 08:35am.\label{subfig:rain_1_t0}]{\includegraphics[width=\curwidth]{\curfolder/data_mu.pdf}}
	\,
	\subfloat[15 Apr 2018, 08:55am.\label{subfig:rain_1_t1}]{\includegraphics[width=\curwidth]{\curfolder/data_nu.pdf}}
	\renewcommand\curfolder{img/data_2}
	\,
	\subfloat[16 Apr 2018, 06:00am\label{subfig:rain_2_t0}]{\includegraphics[width=\curwidth]{\curfolder/data_mu.pdf}}
	\,
	\subfloat[16 Apr 2018, 06:30am.\label{subfig:rain_2_t1}]{\includegraphics[width=\curwidth]{\curfolder/data_nu.pdf}}
	\caption{Two precipitation density pairs over Germany. Plotted in logarithmic scale for better visibility of low-precipitation-areas.}
	\label{fig:data_clouds}
\end{figure}
First, we apply our method to a data set of precipitation densities over Germany, which was made freely available by \citet{dwd}.
We take two pairs of snapshots, see Fig.~\ref{fig:data_clouds}.
In order to cope with memory limitations, we have applied spatial averaging and masked out all indices $i, j$ with $\mu_i = 0$ or $\nu_j= 0$, resulting in the problem dimensions $\vert I\vert = 3930$ for Fig.~\ref{subfig:rain_1_t0}, $\vert J\vert = 3939$ for Fig.~\ref{subfig:rain_1_t1}, $\vert I\vert = 10725$ for Fig.~\ref{subfig:rain_2_t0} and $\vert J\vert = 10028$ for Fig.~\ref{subfig:rain_2_t1}.
In Figs.~\ref{subfig:rain_1_t0}--\ref{subfig:rain_1_t1}, we clearly see two main areas of precipitation: One in the north west, and another one in the south east.
Since these two areas do not move much, it should be easy to identify them as coherent sets.
In the second example, it is less obvious what the optimal partition should be.
Both discrete densities $\mu$ and $\nu$ are normalized such that $\sum_i \mu_i = \sum_j \nu_j = 1$.
Now, we can apply our proposed procedure from Section~\ref{sec:discrete}:
First, we compute the regularized optimal transport plan $\hat{\pi}_\varepsilon$ with regularization parameter $\varepsilon=0.01$ using the Python Optimal Transport package (POT)\footnote{Code available at \url{https://github.com/rflamary/POT} (accessed: 26.06.2020)}, where the distance between two neighboring pixels is $1$.
Then, we compute the singular vectors $u_2$ and $v_2$ of the matrix \smash{$\sighalf_\mu \hat{\pi}_\varepsilon \sighalf_\nu$} as well as the optimal partition vectors $\hat{f} = \sighalf_\mu u_2$ and $\hat{g} = \sighalf_\nu v_2$.
Numerically, we observe that choosing $\varepsilon$ small produces partition functions that are almost constant on the respective parts with a sharp transition between them, whereas very large $\varepsilon$ yields approximately affine partition functions, which is clearly not what we want.
In any case, $\varepsilon$ has to be chosen large enough such that the Sinkhorn algorithm converges in acceptable time.
\renewcommand\curfolder{img/balanced_1e-2_W2_1}
\renewcommand\curimgwidth{0.17\textwidth}
\renewcommand\curbarwidth{0.09\textwidth}
\renewcommand\curboxwidth{0.18\textwidth}
\begin{figure}[tbp]
	\centering
	\subfloat[$\hat{f} = \Sigma^{-1/2}_\mu u_2$.]{
	\makebox[\curboxwidth][c]{\includegraphics[align=c, width=\curimgwidth]{\curfolder/segm_t0.pdf}}}
	\,
	\subfloat[$\hat{g} = \Sigma^{-1/2}_\nu v_2$.]{\makebox[\curboxwidth][c]{\includegraphics[align=c, width=\curimgwidth]{\curfolder/segm_t1.pdf}}}
	\,
	\subfloat{\includegraphics[align=c, width=\curbarwidth]{\curfolder/segm_bar.pdf}}
	\addtocounter{subfigure}{-1}
	\,
	\subfloat[Classifier $\hat{f} \lesseqgtr 0$.]{\makebox[\curboxwidth][c]{\includegraphics[align=c, width=\curimgwidth]{\curfolder/hard_t0.pdf}}}
	\,
	\subfloat[Classifier $\hat{g} \lesseqgtr 0$.]{\makebox[\curboxwidth][c]{\includegraphics[align=c, width=\curimgwidth]{\curfolder/hard_t1.pdf}}}
	\caption{Computed coherent sets for the precipitation densities in Figs.~\ref{subfig:rain_1_t0}--\ref{subfig:rain_1_t1} using regularized OT.}
	\label{fig:opt_partition_1}
	
	
	\vspace{.5cm}

    \renewcommand\curfolder{img/lambd1e-2_reg1_unnormalized_W2_1}

	\centering
	\subfloat[$\hat{f} = \Sigma^{-1/2}_\mu u_2$.]{\makebox[\curboxwidth][c]{\includegraphics[align=c, width=\curimgwidth]{\curfolder/segm_t0.pdf}}}
	\,
	\subfloat[$\hat{g} = \Sigma^{-1/2}_\nu v_2$.]{\makebox[\curboxwidth][c]{\includegraphics[align=c, width=\curimgwidth]{\curfolder/segm_t1.pdf}}}
	\,
	\subfloat{\includegraphics[align=c, width=\curbarwidth]{\curfolder/segm_bar.pdf}}
	\addtocounter{subfigure}{-1}
	\,
	\subfloat[Classifier $\hat{f} \lesseqgtr 0$.]{\makebox[\curboxwidth][c]{\includegraphics[align=c, width=\curimgwidth]{\curfolder/hard_t0.pdf}}}
	\,
	\subfloat[Classifier $\hat{g} \lesseqgtr 0$.]{\makebox[\curboxwidth][c]{\includegraphics[align=c, width=\curimgwidth]{\curfolder/hard_t1.pdf}}}
	\caption{Computed coherent sets for the precipitation densities in Figs.~\ref{subfig:rain_1_t0}--\ref{subfig:rain_1_t1} using unbalanced regularized OT.}
	\label{fig:unbalanced_1}
	
	
	\vspace{.5cm}
	
    \renewcommand\curfolder{img/lambd1e-2_reg1_unnormalized_W2_2}
	\centering
	\subfloat[$\hat{f} = \Sigma^{-1/2}_\mu u_2$.]{\makebox[\curboxwidth][c]{\includegraphics[align=c, width=\curimgwidth]{\curfolder/segm_t0.pdf}}}
	\,
	\subfloat[$\hat{g} = \Sigma^{-1/2}_\nu v_2$.]{\makebox[\curboxwidth][c]{\includegraphics[align=c, width=\curimgwidth]{\curfolder/segm_t1.pdf}}}
	\,
	\subfloat{\includegraphics[align=c, width=\curbarwidth]{\curfolder/segm_bar.pdf}}
	\addtocounter{subfigure}{-1}
	\,
	\subfloat[Classifier $\hat{f} \lesseqgtr 0$.]{\makebox[\curboxwidth][c]{\includegraphics[align=c, width=\curimgwidth]{\curfolder/hard_t0.pdf}}}
	\,
	\subfloat[Classifier $\hat{g} \lesseqgtr 0$.]{\makebox[\curboxwidth][c]{\includegraphics[align=c, width=\curimgwidth]{\curfolder/hard_t1.pdf}}}
	\caption{Computed coherent sets for the precipitation densities in Figs.~\ref{subfig:rain_2_t0}--\ref{subfig:rain_2_t1} using unbalanced regularized OT.}
	\label{fig:unbalanced_2}
\end{figure}

Results are shown in Fig.~\ref{fig:opt_partition_1}.
Note that the dark-blue pixels in Fig.~\ref{fig:data_clouds} are only included to indicate the domain of $\mu$ and $\nu$ and not part of the computation.
Hence, they are discarded in Figs.~\ref{fig:opt_partition_1}--\ref{fig:unbalanced_2}.
We observe that the method works roughly as expected:
As shown in our theory part, the first singular value is one and belongs to the singular pair $(1_{\X}, 1_{\Y})$.
Further, the two precipitation areas are indicated by differing signs in the partition vectors corresponding to the second singular value.
However, taking a closer look, splitting the precipitation areas (or ``fuzzy classification'') does not work perfectly.
In Fig.~\ref{fig:opt_partition_1}, some parts of the bigger ``cloud'' (as we refer to precipitation areas from now on) have values greater than zero, although the rest has negative values.
Ceonsequently, some parts of the upper cloud are classified as ``red'', i.e., as part of the lower cloud.
This issue is even more pronounced for the included hard ``classification'' according to the criteria $\hat{f} \geq 0$ and $\hat{g} \geq 0$.
Note that simple thresholding was sufficient for our purposes, but more sophisticated approaches such as fuzzy $c$-means (which will be used in the next subsection) or sparsity-promoting clustering, see~\citet{froyland2019sparse}, are also applicable.

Next, we want to resolve the mentioned issue of wrong classification.
Of course, it is inaccurate to model precipitation densities as transported mass particles, since they can come down as rain and simply disappear.
Hence, the misclassification is not surprising as the OT plan needs to transport this disappearing mass somewhere else.
Indeed, an examination of the transport plan reveals that mass preservation does not hold in the two clouds, thus they cannot fulfill the coherence condition $L 1_{X_k} = 1_{Y_k}$ (see Section~\ref{sec:segm}) and the transport needs to shift some of the mass from the upper cloud to the lower one.

To compensate for this effect, we propose to use unbalanced regularized OT, relaxing the mass conservation condition on the marginals, see Section~\ref{sec:OT}.
Noteworthy, this modification does not introduce any major computational overhead.
Again, we use $\varepsilon=0.01$ and choose $\kappa=1$, see~\eqref{eq:unbalanced_entropic_ot}. As mentioned by \citet{SFVTP19}, the parameter $\kappa$ intuitively corresponds to a choice of radius for which, when exceeded, it is cheaper to produce or destroy mass instead of transporting it between the two clouds.
Hence, we should choose $\kappa$ small enough to have an effect compared to balanced OT, but large enough so that the marginals of the resulting transport plan are still close to the given $\mu$ and $\nu$.

The results for the data from Figs.~\ref{subfig:rain_1_t0}--\ref{subfig:rain_1_t1} and Figs.~\ref{subfig:rain_2_t0}--\ref{subfig:rain_2_t1} 
are displayed in Figs.~\ref{fig:unbalanced_1} and~\ref{fig:unbalanced_2}, respectively.
As expected, the first singular pair is still given by $(1_{\X}, 1_{\Y})$. 
Now, the area classification in Figs.~\ref{subfig:rain_1_t0}--\ref{subfig:rain_1_t1} is more or less perfect and hence also the second singular value is almost one.
This might not be surprising, as the areas are well separated and moving mass between them is quite expensive.
Compared to standard regularized transport, using unbalanced regularized transport for the data from Figs.~\ref{subfig:rain_2_t0}--\ref{subfig:rain_2_t1} does not change the partition much.
Overall, the results based on unbalanced regularized OT look very promising and hence we use such plans in all further experiments.
Note that a comparison with the coherent set methods mentioned in Section~\ref{sec:intro} is not possible, since these require a known dynamic.

\subsection{Particles Moving in a Potential} \label{subsec:wells}
In this subsection, we discuss the example of \citet[Sec.~6.2, p.~18]{koltai2018optimal} in a slightly modified form.
The dynamical system under consideration consists of particles moving according to standard Brownian motion with a drift term induced by a potential and a rotating force.
More precisely, our particle trajectories are solutions to the stochastic differential equation \[
\dx x_t = (F_r + F_c)(x_t) \dx t + \sqrt{2\beta^{-1}}\dx w_t,
\]
where $w_t$ denotes standard Brownian motion, $F_r(x) = -\nabla W(x)$ is the force coming from the $3$-well potential
\[
W(x) = \cos(3\varphi) + 10(r-1)^2,\quad \text{where } x= \binom{r\cos(\varphi)}{r\sin(\varphi)}
\]
and $F_c$ is a circular driving force in clock-wise direction, given by
\[
F_c(x) \coloneqq \mathrm{e}^{-\beta W(x)}
\begin{pmatrix}
0 & 1 \\
-1 & 0
\end{pmatrix}
x.
\]
In statistical physics, $\beta$ is called \emph{inverse temperature}, which is chosen as $2.0$ in this experiment. Note that $F_c$ is strongest in the valley of each well.
Overall, the particles tend to remain in a well for some time and occasionally hop to another one due to their diffusive motion and the circular force.
We expect the particle distribution to converge to an equilibrium with modes at the potential minima, slightly rotated in clock-wise direction.

First, we create $1000$ initial particle positions for the measure $\mu$ as follows:
We sample a single-particle-trajectory with $50000$ steps of time length~$0.01$ using an Euler--Maruyama-scheme starting from $(1,0)$, where the circular driving force $F_c$ is neglected.
Then, we draw $1000$ trajectory points uniformly at random from these points without replacement. The final discrete measure $\nu$ is simulated by
the numerical trajectories for each one of these $1000$ initial particle positions using $300$ Euler--Maruyama-steps of length~$0.01$, where $F_c$ is included now. This yields $1000$ pairs of starting and ending position $(x_0^i, x_1^i)$ displayed in Fig.~\ref{fig:data_wells}.
To visualize the trajectory of individual particles, we colored each particle in Fig.~\ref{subfig:wells_data_1} in the same color as in Fig.~\ref{subfig:wells_data_0}. Further, in Fig.~\ref{subfig:wells_lines}, we drew connecting lines between initial and terminal particle positions.

Next, we apply Algorithm \ref{alg:u-r-ot} to
\smash{$\mu = \frac{1}{n}\sum_{i=1}^n \delta_{x_0^i}$ and $\nu = \frac{1}{n}\sum_{i=1}^n \delta_{x_1^i}$} with $\kappa=1$.
The regularization parameter $\varepsilon$ is chosen so that the corresponding kernel $\exp(-c/\varepsilon)$ (see \eqref{sinkhorn_kernel}) has standard deviation one third of the particles mean distance at $t=0$.
We write $f_k = \sighalf_\mu u_k$ and $g_k = \sighalf_\nu v_k$ for the partition vectors belonging to the decreasingly ordered singular values $\sigma_k$, where $u_k$ and $v_k$ are the corresponding left and right singular vectors.
The obtained partition vector pairs for $k=2,3$ are shown in Fig.~\ref{fig:partition_wells}.

Using the information from $f_k, g_k$, $k=2, 3$, we can embed every point $x_t^i$ into $\R^2$ via $\Phi \colon x_0^i \mapsto ((f_2, f_3))_i$, $x_1^i \mapsto ((g_2, g_3))_i$.
Now, we partition the data using the fuzzy $c$-means algorithm\footnote{Implementation available at \url{https://github.com/omadson/fuzzy-c-means} (accessed: 26.06.2020)} as described in Sec.~\ref{sec:segm} on this embedded data for three clusters, see also \citet{SS2012}.
In contrast to applying fuzzy $c$-means directly on the individual snapshots at $t=0, 1$, we naturally obtain correspondences between the clusters at the different time steps, where OT serves as a proxy for the underlying dynamics.
The obtained hard clusters and fuzzy membership values are displayed in Fig.~\ref{fig:cmeans_wells}.

Note that applying the method in \citet{froyland2013analytic} under usage of the particle label information cannot yield coherent sets for such strong particle mixing.
Other methods such as the finite-time Lyapunov exponent \citep{shadden2005definition} also only work for short timespans.
On the other hand, comparing this with our method, it is clear from Fig.~\ref{fig:data_wells} that the OT assignment will be very different from the ground truth particle correspondences.
Thus, it aims for a partition based on the ``macroscopic'', ensemble density level rather than on the ``microscopic'' level of individual, distinguishable particles.
Indeed, we observe that the coherent sets are three denser blobs due to the energy landscape.
Since there was no circular force present during the construction of $\mu$, there is a slight offset between the initial and stationary distributions, which is captured by the coherent sets.
As expected, the likeliness of being in a cluster decreases if a point is close to the boundaries of the well.
Consequently, there is some uncertainty about the exact cluster boundaries.
\renewcommand\curfolder{img/wells}
\renewcommand\curwidth{0.27\textwidth}
\begin{figure}[htbp]
	\centering
	\subfloat[Data at time $t=0$.\label{subfig:wells_data_0}]{\includegraphics[align=c, width=\curwidth]{\curfolder/data_0.pdf}}
	\,
	\subfloat[Data at time $t=1$.\label{subfig:wells_data_1}]{\includegraphics[align=c, width=\curwidth]{\curfolder/data_1.pdf}}
	\,
	\subfloat[Movement of particles.\label{subfig:wells_lines}]{\includegraphics[align=c, width=\curwidth]{\curfolder/lines.pdf}}
	\caption{Particles moving in a potential with circular driving force.
	The colors in Figs.~(a)--(b) indicate the angular coordinates at time $t=0$, illustrating the particle mixing.}
	\label{fig:data_wells}

    \vspace{.5cm}
	
	\renewcommand\curwidth{0.19\textwidth}
	\renewcommand\curbarwidth{0.07\textwidth}
	\subfloat[$f_2 = \Sigma^{-1/2}_\mu u_2$.]{\includegraphics[align=c, width=\curwidth]{\curfolder/segm_1_t0.pdf}}
	\,
	\subfloat[$g_2 = \Sigma^{-1/2}_\nu v_2$.]{\includegraphics[align=c, width=\curwidth]{\curfolder/segm_1_t1.pdf}}
	\,
	\subfloat{\includegraphics[align=c, width=\curbarwidth]{\curfolder/segm_1_bar.pdf}}
	\addtocounter{subfigure}{-1}
	\,
	\subfloat[$f_3 = \Sigma^{-1/2}_\mu u_3$.]{\includegraphics[align=c, width=\curwidth]{\curfolder/segm_2_t0.pdf}}
	\,
	\subfloat[$g_3 = \Sigma^{-1/2}_\nu v_3$.]{\includegraphics[align=c, width=\curwidth]{\curfolder/segm_2_t1.pdf}}
	\,
	\subfloat{\includegraphics[align=c, width=\curbarwidth]{\curfolder/segm_2_bar.pdf}}
	\addtocounter{subfigure}{-1}
	\caption{First and second partition vectors for data displayed in Fig.~\ref{fig:data_wells}.}
	\label{fig:partition_wells}
		
	\vspace{.5cm}
	
	\renewcommand\curwidth{0.19\textwidth}
	\renewcommand\curbarwidth{0.07\textwidth}
	\subfloat[Hard, $t=0$.]{\includegraphics[align=c, width=\curwidth]{\curfolder/hard_t0.pdf}}
	\,
	\subfloat[Hard, $t=1$.]{\includegraphics[align=c, width=\curwidth]{\curfolder/hard_t1.pdf}}
	\,
	\subfloat{\includegraphics[align=c, width=\curbarwidth]{\curfolder/hard_bar.pdf}}
	\addtocounter{subfigure}{-1}
	\,
	\subfloat[Cluster $0$, $t=0$.]{\includegraphics[align=c, width=\curwidth]{\curfolder/cmeans_0_t0.pdf}}
	\,
	\subfloat[Cluster $0$, $t=1$.]{\includegraphics[align=c, width=\curwidth]{\curfolder/cmeans_0_t1.pdf}}
	\,
	\subfloat{\includegraphics[align=c, width=\curbarwidth]{\curfolder/cmeans_0_bar.pdf}}
	\addtocounter{subfigure}{-1}
	\par\smallskip
	\subfloat[Cluster $1$, $t=0$.]{\includegraphics[align=c, width=\curwidth]{\curfolder/cmeans_1_t0.pdf}}
	\,
	\subfloat[Cluster $1$, $t=1$.]{\includegraphics[align=c, width=\curwidth]{\curfolder/cmeans_1_t1.pdf}}
	\,
	\subfloat{\includegraphics[align=c, width=\curbarwidth]{\curfolder/cmeans_1_bar.pdf}}
	\addtocounter{subfigure}{-1}
	\,
	\subfloat[Cluster $2$, $t=0$.]{\includegraphics[align=c, width=\curwidth]{\curfolder/cmeans_2_t0.pdf}}
	\,
	\subfloat[Cluster $2$, $t=1$.]{\includegraphics[align=c, width=\curwidth]{\curfolder/cmeans_2_t1.pdf}}
	\,
	\subfloat{\includegraphics[align=c, width=\curbarwidth]{\curfolder/cmeans_2_bar.pdf}}
	\caption{Results of hard classification and fuzzy $c$-means.
	The color scheme in (c)--(h) encodes the likeliness that a point belongs to the cluster $j$ with $j=0, 1, 2$.}
	\label{fig:cmeans_wells}
\end{figure}

\subsection{Particles with Pairwise Lennart--Jones Potentials}\label{subsec:lennart-jones}

In the last example, we aim for a segmentation of particle groups with a slightly more realistic data set, consisting of $200$ particle trajectories created with the molecular dynamics simulation software LAMMPS \citep{lammps}.
In our simulation, the particles interact with each other in terms of a pairwise Lennart--Jones potential with cutoff, essentially repelling each other in close proximity but attracting each other otherwise, such that there is some optimal energy-minimizing pairwise distance \citep{rapaport2004art}.
Given some initial velocity for the particles, they start to stick to each other over time and slowly form groups, which in turn connect to larger groups and so on.
We take two snapshots of the simulation showing some group formation, see Fig.~\ref{fig:data_lj}.
As the domain is the two-dimensional torus, particles leaving the domain on one side come back in from the opposite side in the visualization.
\footnote{The script for generating the trajectories is available at the blog post under \url{http://nznano.blogspot.com/2017/11/molecular-dynamics-in-python.html\#Implementation-in-LAMMPS} (accessed: 26.06.2020)}

Then, we apply our computational scheme with the same parameter choices as in Subsection~\ref{subsec:wells} to obtain the clusters.
The computed segmentation vectors and the induced fuzzy clustering are depicted in Figs.~\ref{fig:partition_lj} and~\ref{fig:cmeans_lj}, respectively.
From a visual point of view, the clustering scheme ``correctly'' detects the stable group on the left, roughly three connecting groups in the middle and several smaller connecting groups on the right, independent of the mixing of several individual particles.

As we obtained the data by a molecular dynamics simulation, we can access the ``ground truth'' particle labels and track the position of every particle forward and backward in time.
Since the particle mixing in this example is not as strong as in Section~\ref{subsec:wells}, it is natural to ask whether the hard cluster labels of particle $i$ are the same at $t=0, 1$.
We observe that the cluster labels agree in $176$ of the total $200$ cases, that is, around $84\%$ of the particles retain their cluster label in time.
Furthermore, we may compare the hard cluster label of each particle $i$ at time $t=0$ with the one that particle $i$ is assigned at time $t=1$ in two scatter plots, where the position in the plot of each particle is fixed and the two different colorings indicate the hard cluster labels for $t=0$ and $t=1$ (that is, corresponding to the left or right singular vectors), respectively.
The results are shown in Fig.~\ref{fig:ground_truth_lj}. 
\renewcommand\curfolder{img/lennart_jones}
\renewcommand\curwidth{0.27\textwidth}
\begin{figure}[htbp]
	\centering
	\subfloat[Data at time $t=0$.]{\includegraphics[align=c, width=\curwidth]{\curfolder/data_0.pdf}}
	\,
	\subfloat[Data at time $t=1$.]{\includegraphics[align=c, width=\curwidth]{\curfolder/data_1.pdf}}
	\caption{Particles moving in a potential with circular driving force.
	The colors indicate the horizontal coordinate at $t=0$, again illustrating the particle mixing.}
	\label{fig:data_lj}

    \vspace{.5cm}
	
	\renewcommand\curwidth{0.19\textwidth}
	\renewcommand\curbarwidth{0.07\textwidth}
	\subfloat[$f_2 = \Sigma^{-1/2}_\mu u_2$.]{\includegraphics[align=c, width=\curwidth]{\curfolder/segm_1_t0.pdf}}
	\,
	\subfloat[$g_2 = \Sigma^{-1/2}_\nu v_2$.]{\includegraphics[align=c, width=\curwidth]{\curfolder/segm_1_t1.pdf}}
	\,
	\subfloat{\includegraphics[align=c, width=\curbarwidth]{\curfolder/segm_1_bar.pdf}}
	\addtocounter{subfigure}{-1}
	\,
	\subfloat[$f_3 = \Sigma^{-1/2}_\mu u_3$.]{\includegraphics[align=c, width=\curwidth]{\curfolder/segm_2_t0.pdf}}
	\,
	\subfloat[$g_3 = \Sigma^{-1/2}_\nu v_3$.]{\includegraphics[align=c, width=\curwidth]{\curfolder/segm_2_t1.pdf}}
	\,
	\subfloat{\includegraphics[align=c, width=\curbarwidth]{\curfolder/segm_2_bar.pdf}}
	\addtocounter{subfigure}{-1}
	\caption{First and second partition vectors for data displayed in Fig.~\ref{fig:data_lj}.}
	\label{fig:partition_lj}
		
	\vspace{.5cm}
	
	\renewcommand\curwidth{0.19\textwidth}
	\renewcommand\curbarwidth{0.07\textwidth}
	\subfloat[Hard, $t=0$.]{\includegraphics[align=c, width=\curwidth]{\curfolder/hard_t0.pdf}}
	\,
	\subfloat[Hard, $t=1$.]{\includegraphics[align=c, width=\curwidth]{\curfolder/hard_t1.pdf}}
	\,
	\subfloat{\includegraphics[align=c, width=\curbarwidth]{\curfolder/hard_bar.pdf}}
	\addtocounter{subfigure}{-1}
	\,
	\subfloat[Cluster $0$, $t=0$.]{\includegraphics[align=c, width=\curwidth]{\curfolder/cmeans_0_t0.pdf}}
	\,
	\subfloat[Cluster $0$, $t=1$.]{\includegraphics[align=c, width=\curwidth]{\curfolder/cmeans_0_t1.pdf}}
	\,
	\subfloat{\includegraphics[align=c, width=\curbarwidth]{\curfolder/cmeans_0_bar.pdf}}
	\addtocounter{subfigure}{-1}
	\par
	\subfloat[Cluster $1$, $t=0$.]{\includegraphics[align=c, width=\curwidth]{\curfolder/cmeans_1_t0.pdf}}
	\,
	\subfloat[Cluster $1$, $t=1$.]{\includegraphics[align=c, width=\curwidth]{\curfolder/cmeans_1_t1.pdf}}
	\,
	\subfloat{\includegraphics[align=c, width=\curbarwidth]{\curfolder/cmeans_1_bar.pdf}}
	\addtocounter{subfigure}{-1}
	\,
	\subfloat[Cluster $2$, $t=0$.]{\includegraphics[align=c, width=\curwidth]{\curfolder/cmeans_2_t0.pdf}}
	\,
	\subfloat[Cluster $2$, $t=1$.]{\includegraphics[align=c, width=\curwidth]{\curfolder/cmeans_2_t1.pdf}}
	\,
	\subfloat{\includegraphics[align=c, width=\curbarwidth]{\curfolder/cmeans_2_bar.pdf}}
	\caption{Results of hard classification and fuzzy $c$-means.
	The color scheme in (c)--(h) encodes the likeliness that a point belongs to the corresponding cluster $j$ with $j=0, 1, 2$.}
	\label{fig:cmeans_lj}
\end{figure}
\begin{figure}
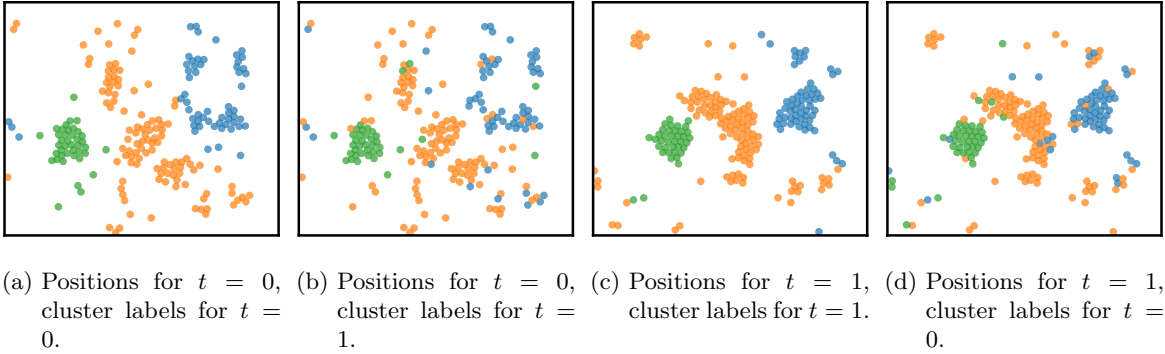

    \centering
	\renewcommand\curwidth{0.23\textwidth}
	\subfloat[Positions for $t=0$, cluster labels for $t=0$.]{\includegraphics[align=c, width=\curwidth]{\curfolder/hard_t0.pdf}}
	\,
	\subfloat[Positions for $t=0$, cluster labels for $t=1$.]{\includegraphics[align=c, width=\curwidth]{\curfolder/tracing_t0.pdf}}
	\,
	\subfloat[Positions for $t=1$, cluster labels for $t=1$.]{\includegraphics[align=c, width=\curwidth]{\curfolder/hard_t1.pdf}}
	\,
	\subfloat[Positions for $t=1$, cluster labels for $t=0$.]{\includegraphics[align=c, width=\curwidth]{\curfolder/tracing_t1.pdf}}
	\caption{Comparison of hard cluster labels corresponding to left and right singular vectors, for particle positions at both $t=0$ and $t=1$, respectively. Here we have used the ground truth particle labels $i$.}
	\label{fig:ground_truth_lj}
\end{figure}

\subsection{Particle Tracking with Concatenated Transfer Operators}\label{sec:Ex4}
Here, we discuss an example also analyzed by \citet{froyland2014almost} and \citet[Sec.~V.A]{banisch2017understanding}.
Consider the non-autonomous system
\begin{equation}\label{eq:ODE_Ex4}
    \begin{aligned}
    \frac{\dx x}{\dx t} &= -\pi A \sin\bigl(\pi f(t, x)\bigr)\cos(\pi y) \\
    \frac{\dx y}{\dx t} &= \pi A \cos\bigl(\pi f(t, x)\bigr)\sin(\pi y) \frac{\dx f}{\dx x}(t, x)
    \end{aligned}
\end{equation}
with $f(x, t) = \alpha \sin(\omega t)x^2 + (1-2\alpha \sin(\omega t))x$ and parameters $A=0.25$, $\alpha = 0.25$ and $\omega = 2\pi$.
This system describes two counter-rotating gyres, where the vertical boundary between them oscillates periodically.
Moreover, it preserves the Lebesgue measure on $\X=\Y=[0,1]\times[0,2]$.

We initialize $n = 450$ particles on an equispaced rectangular grid.
Then, we compute their trajectories by solving \eqref{eq:ODE_Ex4} with $500$ time steps of length $\Delta t=0.02$, i.e., on the time interval~$[0,10]$.
This yields $N=501$ measures $\mu_t=\frac 1 n \sum_{i=1}^n \delta x_t^i$, $t=0,\ldots,N$.
Note that $\Delta t$ is small enough to recover the ground truth particle correspondences in most cases.
Based on the entropy regularized optimal transport plans $\hat \pi_\varepsilon$ between $\mu_t$ and $\mu_{t+1}$ with regularization parameter $\varepsilon=10^{-3}$, we construct corresponding transfer operators $L_{\varepsilon, t}\colon L_2(\R^n)\to L_2(\R^n)$ as $L_{\varepsilon, t} = \sighalf_{\mu_t} \hat \pi_\varepsilon^T \sighalf_{\mu_{t+1}}$.
Again, the entropy regularization readily introduces the required diffusion.
Efficient implementations of the Sinkhorn algorithm can be achieved using, e.g., multiscale schemes \citep{schmitzer19} or parallelization on GPUs \citep{C2013}.
Next, we compute the concatenated transfer operator
$L_\varepsilon = \smash{\prod_{t=0}^{N-1}} L_{\varepsilon, t}$,
for which its matrix is shown in Fig.~\ref{fig:trans_mat}.
Its block-diagonal structure already indicates coherent sets.
Finally, the partitions corresponding to the second and third singular vectors $f_i$, $g_i$, $i=1,2$,
of its SVD are shown in Fig.~\ref{fig:partition_dg}.
The corresponding singular values are $\sigma_2 \approx 0.71$ and $\sigma_3 \approx 0.35$, respectively.
For a hard clustering using fuzzy $c$-means we refer to Fig.~\ref{fig:double_gyre_clustering} in the introduction.

Particle transitions between the left and the right half of the domain are very rare, as indicated by the optimal partition.
Further, the third singular vectors illustrate that the particles, which move in closed curves around the respective gyre cores, take a long time to transition from the gyre centers to their boundaries or vice versa. 
In summary, this example illustrates how our method can be used to compute coherent sets for flows with unlabeled particles.
Here, OT is used to track them through observations of subsequent timesteps; see also Particle Image Velocimetry \citep{saumier2015piv}.

\renewcommand\curfolder{img/double_gyre}
\begin{figure}[htbp]
	\centering
	
    \renewcommand\curwidth{0.4\textwidth}
	\includegraphics[align=c, width=\curwidth]{\curfolder/tf_op.pdf}
	\caption{Matrix of concatenated transfer operator $L_\varepsilon$. The rows and columns are ordered according to the horizontal and then vertical coordinates of the corresponding particles in the initial configuration.}
	\label{fig:trans_mat}
	
	\vspace{.5cm}
	
    \renewcommand\curwidth{0.35\textwidth}
	\renewcommand\curbarwidth{0.07\textwidth}
	\subfloat[$f_2 = \Sigma^{-1/2}_\mu u_2$.]{\includegraphics[align=c, width=\curwidth]{\curfolder/segm_1_t0.pdf}}
	\,
	\subfloat[$g_2 = \Sigma^{-1/2}_\nu v_2$.]{\includegraphics[align=c, width=\curwidth]{\curfolder/segm_1_t500.pdf}}
	\,
	\subfloat{\includegraphics[align=c, width=\curbarwidth]{\curfolder/segm_1_bar.pdf}}
	\addtocounter{subfigure}{-1}
	
	\vspace{.5cm}
	
	\subfloat[$f_3 = \Sigma^{-1/2}_\mu u_3$.]{\includegraphics[align=c, width=\curwidth]{\curfolder/segm_2_t0.pdf}}
	\,
	\subfloat[$g_3 = \Sigma^{-1/2}_\nu v_3$.]{\includegraphics[align=c, width=\curwidth]{\curfolder/segm_2_t500.pdf}}
	\,
	\subfloat{\includegraphics[align=c, width=\curbarwidth]{\curfolder/segm_2_bar.pdf}}
	\addtocounter{subfigure}{-1}
	\caption{First and second partition vectors for double gyre time series.}
	\label{fig:partition_dg}
\end{figure}

\section{Conclusions} \label{sec:conclusions}
This is the first paper that merges the theories of Frobenius--Perron operators and regularized optimal transport. We have elaborated how regularized (and possibly unbalanced) OT can be used to compute coherent sets if all we know about the dynamics of a moving particle system or a continuous quantity of moving mass is a pair of measures that constitute a preimage-image pair under the dynamical evolution.
We have also shown that the theory of (regularized/unbalanced) optimal transport is fitting well to the concept of coherent sets.
Moreover, it has natural dynamical interpretations as the regularization parameter $\varepsilon\to 0$ or the number of data points $n\to \infty$, see Proposition~\ref{prop:conv-reg} and Proposition~\ref{prop:schroedinger}, respectively.

In four numerical examples we have shown how the method performs.
These examples underline the initial suspicion that without further structural ``aid'' or dynamic information the knowledge of the one-step evolution of a single measure is not sufficient to identify coherent sets correctly.
It is necessary to incorporate additional dynamical information into the analysis.
Thus, a topic interesting to address is OT of multiple measure pairs (multiple steps of evolving one measure, or one step of evolving multiple measures), such as in segmentation of vector- and manifold-valued images,
see, e.g., \citet{CTA2018,FLS2016,FLS2017,KMDL2019,thorpe17}.
Further, working with discrete OT calls for a consistency result when approximating ground truth measures with atomic ones, as it has been given for the case of (static) spectral clustering in \citet{trillos18} using transportation distances between functions as provided in the aforementioned references.
Finally, the efficacy of unbalanced OT suggests to model certain scenarios as \emph{open dynamical systems}.

\section*{Acknowledgments}
Funding through the German Research Foundation (DFG) with\-in the project STE 571/16-1 is gratefully acknowledged by GS,  through grant CRC 1114 ``Scaling Cascades in Complex Systems'', Project Number 235221301, projects A01 ``Coupling a multiscale stochastic precipitation model to large scale atmospheric flow dynamics'' by PK\ and B07 ``Selfsimilar structures in turbulent flows and the construction of LES closures'' by JvL.
The authors want to thank Henning Rust, Institute for Meteorology at FU Berlin, for his advice regarding the data of the example from Section~\ref{subsec:precipitation}.


\begin{thebibliography}{}
	
	\bibitem[Adams \& Fournier(2003)Adams \& Fournier]{AdamsFournier03}
	{\sc Adams, R. \& Fournier, J.} (2003)
	\newblock {\em Sobolev Spaces\/}. Pure and Applied Mathematics,  vol. 140,
	second edn.
	\newblock Amsterdam: Elsevier/Academic Press, pp. xiv+305.
	
	\bibitem[AlMomani \& Bollt(2018)AlMomani \& Bollt]{almomani2018go}
	{\sc AlMomani, A. \& Bollt, E.} (2018)
	\newblock Go with the flow, on {J}upiter and snow. {C}oherence from model-free
	video data without trajectories.
	\newblock {\em J. Nonlinear Sci.}, 1--30.
	
	\bibitem[Aref(2002)Aref]{aref_02}
	{\sc Aref, H.} (2002)
	\newblock The development of chaotic advection.
	\newblock {\em Phys. Fluids\/}, {\bf 14}, 1315--1325.
	
	\bibitem[Banisch \& Koltai(2017)Banisch \& Koltai]{banisch2017understanding}
	{\sc Banisch, R. \& Koltai, P.} (2017)
	\newblock Understanding the geometry of transport: diffusion maps for
	{L}agrangian trajectory data unravel coherent sets.
	\newblock {\em Chaos\/}, {\bf 27}, 035804, 16.
	
	\bibitem[Becker {\em et~al.}(2014)Becker, Petra, \& Schn\"{o}rr]{BPS2014}
	{\sc Becker, F., Petra, S. \& Schn\"{o}rr, C.} (2014)
	\newblock {O}ptical {F}low.
	\newblock {\em Handbook of Mathematical Methods in Imaging\/} (O.~Scherzer
	ed.).
	\newblock New York: Springer.
	
	\bibitem[Billingsley(1999)Billingsley]{Billingsley99}
	{\sc Billingsley, P.} (1999)
	\newblock {\em Convergence of Probability Measures\/}.
	\newblock Wiley Series in Probability and Statistics, second edn.
	\newblock New York: John Wiley, pp. x+277.
	
	\bibitem[Birman \& Solomjak(1987)Birman \& Solomjak]{BS1986}
	{\sc Birman, M.~S. \& Solomjak, M.~Z.} (1987)
	\newblock {\em Spectral Theory of Selfadjoint Operators in {H}ilbert Space\/}.
	\newblock Mathematics and its Applications.
	\newblock Dordrecht: Springer Netherlands, pp. xv+301.
	
	\bibitem[Boyarsky \& G\'{o}ra(1997)Boyarsky \& G\'{o}ra]{boyarsky1997laws}
	{\sc Boyarsky, A. \& G\'{o}ra, P.} (1997)
	\newblock {\em Laws of Chaos\/}.
	\newblock Probability and its Applications.
	\newblock Boston: Birkh\"{a}user Boston, pp. xvi+399.
	
	\bibitem[Brin \& Stuck(2002)Brin \& Stuck]{brin2002introduction}
	{\sc Brin, M. \& Stuck, G.} (2002)
	\newblock {\em Introduction to Dynamical Systems\/}.
	\newblock Cambridge: Cambridge University Press, pp. xii+240.
	
	\bibitem[Carlier {\em et~al.}(2017)Carlier, Duval, Peyr\'{e}, \&
	Schmitzer]{CDPS17}
	{\sc Carlier, G., Duval, V., Peyr\'{e}, G. \& Schmitzer, B.} (2017)
	\newblock Convergence of entropic schemes for optimal transport and gradient
	flows.
	\newblock {\em SIAM J. Math. Anal.}, {\bf 49}, 1385--1418.
	
	\bibitem[Chen {\em et~al.}(2018)Chen, Tryohon, \& Tannenbaum]{CTA2018}
	{\sc Chen, Y., Tryohon, T.~G. \& Tannenbaum, A.} (2018)
	\newblock Vector-valued optimal mass transport.
	\newblock {\em SIAM J. Appl. Math.}, {\bf 78}, 1682--1696.
	
	\bibitem[Chizat {\em et~al.}(2018)Chizat, Peyr\'{e}, Schmitzer, \&
	Vialard]{CPSV18}
	{\sc Chizat, L., Peyr\'{e}, G., Schmitzer, B. \& Vialard, F.-X.} (2018)
	\newblock Scaling algorithms for unbalanced optimal transport problems.
	\newblock {\em Math. Comput.}, {\bf 87}, 2563--2609.
	
	\bibitem[{Clason} {\em et~al.}(2019){Clason}, {Lorenz}, {Mahler}, \&
	{Wirth}]{CLMW19}
	{\sc {Clason}, C., {Lorenz}, D.~A., {Mahler}, H. \& {Wirth}, B.} (2019)
	\newblock {Entropic regularization of continuous optimal transport problems}.
	\newblock {\em arXiv e-prints\/}, arXiv:1906.01333.
	
	\bibitem[Cuturi(2013)Cuturi]{C2013}
	{\sc Cuturi, M.} (2013)
	\newblock Sinkhorn distances: Lightspeed computation of optimal transport.
	\newblock {\em Advances in Neural Information Processing Systems\/} (C.~J.~C.
	Burges, L.~Bottou, M.~Welling, Z.~Ghahramani \& K.~Q. Weinberger eds), vol.
	26.
	\newblock New York: Curran Associates, pp. 2292--2300.
	
	\bibitem[Cuturi \& Peyr\'e(2019)Cuturi \& Peyr\'e]{CP2019}
	{\sc Cuturi, M. \& Peyr\'e, G.} (2019)
	\newblock Computational optimal transport.
	\newblock {\em Found. Trends Mach. Learn.}, {\bf 11}, 355--607.
	
	\bibitem[Dembo \& Zeitouni(2010)Dembo \& Zeitouni]{DZ2010}
	{\sc Dembo, A. \& Zeitouni, O.} (2010)
	\newblock {\em Large Deviations Techniques and Applications\/}. Stochastic
	Modelling and Applied Probability,  vol.~38.
	\newblock Berlin: Springer, pp. xvi+396.
	
	\bibitem[Feydy {\em et~al.}(2019)Feydy, S\'ejourn\'e, Vialard, Amari, Trouv\'e,
	\& Peyr\'e]{FSVATP2019}
	{\sc Feydy, J., S\'ejourn\'e, T., Vialard, F.-X., Amari, S., Trouv\'e, A. \&
		Peyr\'e, G.} (2019)
	\newblock Interpolating between optimal transport and {MMD} using {Sinkhorn}
	divergences.
	\newblock {\em Proceedings of Machine Learning Research\/}, vol. 89.
	\newblock PMLR, pp. 2681--2690.
	
	\bibitem[Fitschen {\em et~al.}(2016)Fitschen, Laus, \& Steidl]{FLS2016}
	{\sc Fitschen, J.~H., Laus, F. \& Steidl, G.} (2016)
	\newblock Transport between {RGB} images motivated by dynamic optimal
	transport.
	\newblock {\em J. Math. Imaging Vision\/}, {\bf 56}, 409--429.
	
	\bibitem[Fitschen {\em et~al.}(2017)Fitschen, Laus, \& Schmitzer]{FLS2017}
	{\sc Fitschen, J.~H., Laus, F. \& Schmitzer, B.} (2017)
	\newblock Generalized optimal transport for manifold-valued images.
	\newblock {\em Scale Space and Variational Methods in Computer Vision\/}.
	\newblock Springer International Publishing, pp. 460--472.
	
	\bibitem[Folland(1984)Folland]{Folland1984}
	{\sc Folland, G.~B.} (1984)
	\newblock {\em Real Analysis\/}.
	\newblock New York: J. Wiley.
	
	\bibitem[Froyland {\em et~al.}(2010)Froyland, Santitissadeekorn, \&
	Monahan]{FrSaMo10}
	{\sc Froyland, G., Santitissadeekorn, N. \& Monahan, A.} (2010)
	\newblock Transport in time-dependent dynamical systems: {F}inite-time coherent
	sets.
	\newblock {\em Chaos\/}, {\bf 20}, 043116.
	
	\bibitem[Froyland(2013)Froyland]{froyland2013analytic}
	{\sc Froyland, G.} (2013)
	\newblock An analytic framework for identifying finite-time coherent sets in
	time-dependent dynamical systems.
	\newblock {\em Phys. D\/}, {\bf 250}, 1--19.
	
	\bibitem[Froyland {\em et~al.}(2019)Froyland, Rock, \&
	Sakellariou]{froyland2019sparse}
	{\sc Froyland, G., Rock, C.~P. \& Sakellariou, K.} (2019)
	\newblock Sparse eigenbasis approximation: Multiple feature extraction across
	spatiotemporal scales with application to coherent set identification.
	\newblock {\em Commun. Nonlinear Sci. Numer. Simul.}, {\bf 77}, 81--107.
	
	\bibitem[Froyland \& Padberg(2009)Froyland \& Padberg]{froyland_padberg_09}
	{\sc Froyland, G. \& Padberg, K.} (2009)
	\newblock Almost-invariant sets and invariant manifolds -- connecting
	probabilistic and geometric descriptions of coherent structures in flows.
	\newblock {\em Phys. D\/}, {\bf 238}, 1507--1523.
	
	\bibitem[Froyland \& Padberg-Gehle(2014a)Froyland \& Padberg-Gehle]{FrPa14}
	{\sc Froyland, G. \& Padberg-Gehle, K.} (2014a)
	\newblock Almost-invariant and finite-time coherent sets: directionality,
	duration, and diffusion.
	\newblock {\em Ergodic theory, open dynamics, and coherent structures\/}.
	Springer Proc. Math. Stat., vol. 70.
	\newblock New York: Springer, pp. 171--216.
	
	\bibitem[Froyland \& Padberg-Gehle(2014b)Froyland \&
	Padberg-Gehle]{froyland2014almost}
	{\sc Froyland, G. \& Padberg-Gehle, K.} (2014b)
	\newblock Almost-invariant and finite-time coherent sets: directionality,
	duration, and diffusion.
	\newblock {\em Ergodic theory, open dynamics, and coherent structures\/}.
	Springer Proc. Math. Stat., vol. 70.
	\newblock New York: Springer, pp. 171--216.
	
	\bibitem[Garc\'{\i}a~Trillos \& Slep\v{c}ev(2018)Garc\'{\i}a~Trillos \&
	Slep\v{c}ev]{trillos18}
	{\sc Garc\'{\i}a~Trillos, N. \& Slep\v{c}ev, D.} (2018)
	\newblock A variational approach to the consistency of spectral clustering.
	\newblock {\em Appl. Comput. Harmon. Anal.}, {\bf 45}, 239--281.
	
	\bibitem[Hadjighasem {\em et~al.}(2017)Hadjighasem, Farazmand, Blazevski,
	Froyland, \& Haller]{hadjighasem2017critical}
	{\sc Hadjighasem, A., Farazmand, M., Blazevski, D., Froyland, G. \& Haller, G.}
	(2017)
	\newblock A critical comparison of {L}agrangian methods for coherent structure
	detection.
	\newblock {\em Chaos\/}, {\bf 27}, 053104.
	
	\bibitem[Hadjighasem \& Haller(2016)Hadjighasem \&
	Haller]{hadjighasem2016geodesic}
	{\sc Hadjighasem, A. \& Haller, G.} (2016)
	\newblock Geodesic transport barriers in {J}upiter's atmosphere: A video-based
	analysis.
	\newblock {\em SIAM Rev.}, {\bf 58}, 69--89.
	
	\bibitem[Haller {\em et~al.}(2018)Haller, Karrasch, \& Kogelbauer]{HaKaKo18}
	{\sc Haller, G., Karrasch, D. \& Kogelbauer, F.} (2018)
	\newblock Material barriers to diffusive and stochastic transport.
	\newblock {\em Proc. Natl. Acad. Sci. USA\/}, {\bf 115}, 9074--9079.
	
	\bibitem[Haller \& Poje(1998)Haller \& Poje]{haller1998finite}
	{\sc Haller, G. \& Poje, A.~C.} (1998)
	\newblock Finite time transport in aperiodic flows.
	\newblock {\em Phys. D\/}, {\bf 119}, 352--380.
	
	\bibitem[Jones \& Winkler(2002)Jones \& Winkler]{jones2002invariant}
	{\sc Jones, C. \& Winkler, S.} (2002)
	\newblock Invariant manifolds and {L}agrangian dynamics in the ocean and
	atmosphere.
	\newblock {\em Handbook of Dynamical Systems\/}, vol. 2.
	\newblock Amsterdam: North-Holland, pp. 55--92.
	
	\bibitem[Karrasch \& Keller(2020)Karrasch \& Keller]{KaKe20}
	{\sc Karrasch, D. \& Keller, J.} (2020)
	\newblock A geometric heat-flow theory of {L}agrangian coherent structures.
	\newblock {\em J. Nonlinear Sci.}, {\bf 30}, 1849--1888.
	
	\bibitem[Khas'minskii(1963)Khas'minskii]{Kha63}
	{\sc Khas'minskii, R.~Z.} (1963)
	\newblock Principle of averaging for parabolic and elliptic differential
	equations and for {M}arkov processes with small diffusion.
	\newblock {\em Theor. Probab. Appl.}, {\bf 8}, 1--21.
	
	\bibitem[Kifer(1986)Kifer]{Kif86}
	{\sc Kifer, Y.} (1986)
	\newblock General random perturbations of hyperbolic and expanding
	transformations.
	\newblock {\em J. d'Analyse Math.}, {\bf 47}, 111--150.
	
	\bibitem[Klus {\em et~al.}(2018)Klus, N\"{u}ske, Koltai, Wu, Kevrekidis,
	Sch\"{u}tte, \& No\'{e}]{KNKW18}
	{\sc Klus, S., N\"{u}ske, F., Koltai, P., Wu, H., Kevrekidis, I., Sch\"{u}tte,
		C. \& No\'{e}, F.} (2018)
	\newblock Data-driven model reduction and transfer operator approximation.
	\newblock {\em J. Nonlinear Sci.}, {\bf 28}, 985--1010.
	
	\bibitem[Koltai {\em et~al.}(2018)Koltai, Wu, No{\'e}, \&
	Sch{\"u}tte]{koltai2018optimal}
	{\sc Koltai, P., Wu, H., No{\'e}, F. \& Sch{\"u}tte, C.} (2018)
	\newblock Optimal data-driven estimation of generalized {M}arkov state models
	for non-equilibrium dynamics.
	\newblock {\em Computation\/}, {\bf 6}, 22.
	
	\bibitem[Koltai \& Renger(2018)Koltai \& Renger]{KoRe18}
	{\sc Koltai, P. \& Renger, D.~M.} (2018)
	\newblock From large deviations to semidistances of transport and mixing:
	coherence analysis for finite {L}agrangian data.
	\newblock {\em J. Nonlinear Sci.}, {\bf 28}, 1915--1957.
	
	\bibitem[Kushinsky {\em et~al.}(2019)Kushinsky, Maron, Dym, \&
	Lipman]{KMDL2019}
	{\sc Kushinsky, Y., Maron, H., Dym, N. \& Lipman, Y.} (2019)
	\newblock Sinkhorn algorithm for lifted assignment problems.
	\newblock {\em SIAM J. Imag. Sci.}, {\bf 12}, 716--735.
	
	\bibitem[Lasota \& Mackey(1994)Lasota \& Mackey]{LasotaAndrzej1994CFaN}
	{\sc Lasota, A. \& Mackey, M.} (1994)
	\newblock {\em Chaos, {F}ractals, and {N}oise: Stochastic Aspects of
		Dynamics\/}. Applied Mathematical Sciences,  vol.~97, second edn.
	\newblock New York: Springer, pp. xiv+472.
	
	\bibitem[L\'{e}onard(2010)L\'{e}onard]{leonard2010entropic}
	{\sc L\'{e}onard, C.} (2010)
	\newblock Entropic projections and dominating points.
	\newblock {\em ESAIM Probab. Stat.}, {\bf 14}, 343--381.
	
	\bibitem[L\'{e}onard(2014)L\'{e}onard]{leonard2013survey}
	{\sc L\'{e}onard, C.} (2014)
	\newblock A survey of the {S}chr\"{o}dinger problem and some of its connections
	with optimal transport.
	\newblock {\em Discrete Contin. Dyn. Syst.}, {\bf 34}, 1533--1574.
	
	\bibitem[Navrotskaya \& Rabier(2013)Navrotskaya \& Rabier]{NR2013}
	{\sc Navrotskaya, I. \& Rabier, P.} (2013)
	\newblock {$L\log L$} and finite entropy.
	\newblock {\em Adv. Nonlinear Anal.}, {\bf 2}, 379--387.
	
	\bibitem[Neumayer \& Steidl(2020)Neumayer \& Steidl]{neumayer2020optimal}
	{\sc Neumayer, S. \& Steidl, G.} (2020)
	\newblock From optimal transport to discrepancy.
	\newblock {\em arXiv e-prints\/}, arXiv:2002.01189.
	
	\bibitem[Plimpton(1993)Plimpton]{lammps}
	{\sc Plimpton, S.} (1993)
	\newblock Fast parallel algorithms for short-range molecular dynamics.
	\newblock {\em Technical Report}.
	\newblock Albuquerque, NM (United States): Sandia National Labs.
	
	\bibitem[Rapaport(2004)Rapaport]{rapaport2004art}
	{\sc Rapaport, D.} (2004)
	\newblock {\em The Art of Molecular Dynamics Simulation\/}.
	\newblock Cambridge: Cambridge University Press.
	
	\bibitem[Rom-Kedar {\em et~al.}(1990)Rom-Kedar, Leonard, \&
	Wiggins]{romkedar_etal_1990}
	{\sc Rom-Kedar, V., Leonard, A. \& Wiggins, S.} (1990)
	\newblock An analytical study of transport, mixing and chaos in an unsteady
	vortical flow.
	\newblock {\em J. Fluid Mech.}, {\bf 214}, 347--394.
	
	\bibitem[Santambrogio(2015)Santambrogio]{S2015}
	{\sc Santambrogio, F.} (2015)
	\newblock {\em Optimal Transport for Applied Mathematicians\/}. Progress in
	Nonlinear Differential Equations and their Applications,  vol.~87.
	\newblock Cham: Birkh\"{a}user/Springer, pp. xxvii+353.
	
	\bibitem[Santitissadeekorn \& Bollt(2020)Santitissadeekorn \&
	Bollt]{santitissadeekorn2019iterative}
	{\sc Santitissadeekorn, N. \& Bollt, E.~M.} (2020)
	\newblock Ensemble-based method for the inverse {F}robenius-{P}erron operator
	problem: data-driven global analysis from spatiotemporal ``movie'' data.
	\newblock {\em Phys. D\/}, {\bf 411}, 132603.
	
	\bibitem[Saumier {\em et~al.}(2015)Saumier, Khouider, \& Agueh]{saumier2015piv}
	{\sc Saumier, L.-P., Khouider, B. \& Agueh, M.} (2015)
	\newblock Optimal transport for particle image velocimetry.
	\newblock {\em Commun. Math. Sci.}, {\bf 13}, 269--296.
	
	\bibitem[Schmitzer(2019)Schmitzer]{schmitzer19}
	{\sc Schmitzer, B.} (2019)
	\newblock Stabilized sparse scaling algorithms for entropy regularized
	transport problems.
	\newblock {\em SIAM J. Sci. Comput.}, {\bf 41}, A1443--A1481.
	
	\bibitem[{S{\'e}journ{\'e}} {\em et~al.}(2019){S{\'e}journ{\'e}}, {Feydy},
	{Vialard}, {Trouv{\'e}}, \& {Peyr{\'e}}]{SFVTP19}
	{\sc {S{\'e}journ{\'e}}, T., {Feydy}, J., {Vialard}, F.-X., {Trouv{\'e}}, A. \&
		{Peyr{\'e}}, G.} (2019)
	\newblock {Sinkhorn Divergences for Unbalanced Optimal Transport}.
	\newblock {\em arXiv e-prints\/}, arXiv:1910.12958.
	
	\bibitem[Shadden {\em et~al.}(2005)Shadden, Lekien, \&
	Marsden]{shadden2005definition}
	{\sc Shadden, S.~C., Lekien, F. \& Marsden, J.~E.} (2005)
	\newblock Definition and properties of {L}agrangian coherent structures from
	finite-time {L}yapunov exponents in two-dimensional aperiodic flows.
	\newblock {\em Phys. D\/}, {\bf 212}, 271--304.
	
	\bibitem[Shafei \& Steidl(2012)Shafei \& Steidl]{SS2012}
	{\sc Shafei, B. \& Steidl, G.} (2012)
	\newblock Segmentation of images with separating layers by fuzzy c-means and
	convex optimization.
	\newblock {\em J. Visual Commun. Image Represent.}, {\bf 3}, 611--621.
	
	\bibitem[Shiryaev(1996)Shiryaev]{Shiryaev1996}
	{\sc Shiryaev, A.~N.} (1996)
	\newblock {\em Probability\/}. Graduate Texts in Mathematics,  vol.~95, second
	edn.
	\newblock New York: Springer, pp. xvi+623.
	\newblock Translated from the first (1980) Russian edition by R. P. Boas.
	
	\bibitem[Thiffeault(2012)Thiffeault]{Thi12}
	{\sc Thiffeault, J.-L.} (2012)
	\newblock Using multiscale norms to quantify mixing and transport.
	\newblock {\em Nonlinearity\/}, {\bf 25}.
	
	\bibitem[Thorpe {\em et~al.}(2017)Thorpe, Park, Kolouri, Rohde, \&
	Slep\v{c}ev]{thorpe17}
	{\sc Thorpe, M., Park, S., Kolouri, S., Rohde, G.~K. \& Slep\v{c}ev, D.} (2017)
	\newblock A transportation {$L^p$} distance for signal analysis.
	\newblock {\em J. Math. Imaging Vision\/}, {\bf 59}, 187--210.
	
	\bibitem[von Luxburg(2007)von Luxburg]{Luxburg2007}
	{\sc von Luxburg, U.} (2007)
	\newblock A tutorial on spectral clustering.
	\newblock {\em Stat. Comput.}, {\bf 17}, 395--416.
	
	\bibitem[Wiggins(1992)Wiggins]{wiggins_92}
	{\sc Wiggins, S.} (1992)
	\newblock {\em Chaotic Transport in Dynamical Systems\/}.
	\newblock New York: Springer.
	
	\bibitem[Wiggins(2005)Wiggins]{wiggins2005dynamical}
	{\sc Wiggins, S.} (2005)
	\newblock The dynamical systems approach to {L}agrangian transport in oceanic
	flows.
	\newblock {\em Annu. Rev. Fluid Mech.}, {\bf 37}, 295--328.
	
	\bibitem[Winterrath {\em et~al.}(2018)Winterrath, Brendel, Hafer,
	Jungh{\"a}nel, Klameth, Lengfeld, Walawender, Weigl, \& Becker]{dwd}
	{\sc Winterrath, T., Brendel, C., Hafer, M., Jungh{\"a}nel, T., Klameth, A.,
		Lengfeld, K., Walawender, E., Weigl, E. \& Becker, A.} (2018)
	\newblock Radklim version 2017.002: Reprocessed quasi gauge-adjusted radar
	data, 5-minute precipitation sums (yw).
	
\end{thebibliography}

\end{document}